\documentclass[12pt]{amsart}
\usepackage[none]{hyphenat}
\usepackage[utf8]{inputenc}
\usepackage[T1]{fontenc}
\usepackage{amsfonts}
\usepackage[leqno]{amsmath}
\usepackage{amssymb, comment, float}
\usepackage[dvipsnames]{xcolor}
\usepackage[foot]{amsaddr}
\usepackage[shortlabels]{enumitem}
\usepackage{mathdots}
\bibliographystyle{plain}
\usepackage[footskip=0.5in,  headheight = 0.5in, top=1.25in, bottom=1.25in,  right=1in,  left=1in]{geometry}
\usepackage{hyperref}
\hypersetup{
colorlinks,
linkcolor=black,
urlcolor=Blue,
citecolor=black,}
\usepackage[center]{caption}
\usepackage{eufrak}
\usepackage{marvosym}     
\usepackage{latexsym}
\usepackage[nodayofweek]{datetime}
\usepackage{tikz}
\usepackage{subfig}
\usepackage{thm-restate}

\newtheorem{theorem}{Theorem}[section]
\newtheorem{lemma}[theorem]{Lemma}

\newtheorem{observation}[theorem]{Observation}

\DeclareMathOperator{\tw}{tw}
\DeclareMathOperator{\pw}{pw}


\def\dd{\hbox{-}}   

\newcommand{\mf}{\mathfrak}
\newcommand{\mca}{\mathcal}
\newcommand{\poi}{\mathbb{N}}

\newcounter{tbox}
\newcommand{\sta}[1]{\medskip\medskip\refstepcounter{tbox}\noindent{\parbox{\textwidth}{(\thetbox) \textit{#1}}}\vspace*{0.3cm}}
\newcommand{\mylongtitle}[1]{%
  \ifodd\value{page}%
    \protect\parbox{0.97\linewidth}{#1}\hfill%
  \else%
    \hfill\protect\parbox{0.97\linewidth}{#1}%
  \fi%
}



\title[Induced subgraphs and tree decompositions XVIII.]{Induced subgraphs and tree decompositions\\
XVIII. Obstructions to bounded pathwidth}

\author{Maria Chudnovsky$^{\dagger \ast}$}
\author{Sepehr Hajebi$^{\mathsection}$}
\author{Sophie Spirkl$^{\mathsection \parallel}$}

\thanks{$^{\dagger}$ Princeton University, Princeton, NJ, USA}
\thanks{$^{\mathsection}$ Department of Combinatorics and Optimization, University of Waterloo, Waterloo, Ontario, Canada}
\thanks{$^{\ast}$ Supported by  NSF-EPSRC Grant DMS-2120644, AFOSR grant FA9550-22-1-0083 and NSF Grant DMS-2348219.} 
\thanks{$^{\parallel}$ We acknowledge the support of the Natural Sciences and Engineering Research Council of Canada (NSERC), [funding reference number RGPIN-2020-03912].
Cette recherche a \'et\'e financ\'ee par le Conseil de recherches en sciences naturelles et en g\'enie du Canada (CRSNG), [num\'ero de r\'ef\'erence RGPIN-2020-03912]. This project was funded in part by the Government of Ontario. This research was conducted while Spirkl was an Alfred P. Sloan Fellow.}
\date {\today}
\begin{document}
\maketitle

\begin{abstract}

The \textit{pathwidth} of a graph $G$ is the smallest $w\in \poi$ such that $G$ can be constructed from a sequence of graphs, each on at most $w+1$ vertices, by gluing them together in a linear fashion. We provide a full classification of the unavoidable induced subgraphs of graphs with large pathwidth. 

 \end{abstract}
\section{Introduction}

The set of all positive integers is denoted by $\poi$, and for every integer $k$, the set of all positive integers no greater than $k$ is denoted by $\poi_k$. Graphs in this paper have finite vertex sets, no loops and no parallel edges. For standard graph theoretic terminology, the reader is referred to \cite{diestel}.

For a graph $G = (V(G),E(G))$, the \textit{treewidth} of $G$, denoted $\tw(G)$, is the smallest $w\in \poi$ for which there is a tree $T$ and an assignment of a subtree $T_v$ of $T$ to each vertex $v\in V(G)$ such that:

\begin{itemize}
    \item for every edge $uv \in E(G)$, we have $V(T_u)\cap V(T_v)=\varnothing$; 
    \item for every vertex $x\in V(T)$, we have $|\{v\in V(G):x\in V(T_v)\}|\leq w+1$.
\end{itemize}

The \textit{pathwidth} of $G$, denoted $\pw(G)$, is defined analogously with $T$ being a path (instead of a general tree).


Two  central results  in graph minor theory are complete descriptions of unavoidable minors in graphs of large treewidth and pathwidth. These are, respectively,
planar graphs and forests: 

\begin{theorem}[Robertson and Seymour \cite{GM5}]\label{thm:RSTW}
For every planar graph  $H$, there is a constant $f_{\ref{thm:RSTW}}=f_{\ref{thm:RSTW}}(H)\in \poi$ such that every graph $G$ with $\tw(G)> f_{\ref{thm:RSPW}}$ has a minor isomorphic to $H$. Moreover, if $H$ is not planar, then no such constant exists.
\end{theorem}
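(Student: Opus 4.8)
The plan is to reduce the first assertion to the \emph{Excluded Grid Theorem}: there is a function $g\colon\poi\to\poi$ such that every graph $G$ with $\tw(G)>g(t)$ has the $(t\times t)$-grid as a minor. Granting this, the result follows from two elementary facts. First, every planar graph is a minor of a large enough grid: given a plane drawing of $H$, one may deform it to an orthogonal-style grid layout---vertices replaced by small lattice boxes, edges by lattice paths---inside a square of side $\mathrm{O}(|V(H)|)$, which exhibits $H$ as a minor of the $(t\times t)$-grid for some $t=t(H)$. Second, the property of having an $H$-minor is closed under taking minors, so any graph with a $(t(H)\times t(H))$-grid minor has an $H$-minor. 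Setting $f_{\ref{thm:RSTW}}(H):=g(t(H))$ then proves the first assertion.

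The second assertion is a short obstruction argument. If $H$ is not planar, then since the class of planar graphs is minor-closed, no planar graph has an $H$-minor; but the $(t\times t)$-grid is planar with treewidth at least $t$, so planar graphs have unbounded treewidth, and hence for every $w\in\poi$ there is a graph of treewidth greater than $w$ with no $H$-minor. Thus no finite constant $f_{\ref{thm:RSTW}}(H)$ can exist.

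What remains---and what carries essentially all the difficulty---is the Excluded Grid Theorem. I would prove it in two movements. \emph{(i) From width to linkedness.} A graph of treewidth at least $k$ contains a \emph{well-linked} set $A$ of size $\Omega(k)$---a set such that for every bipartition $A=A_1\cup A_2$ there are $\min(|A_1|,|A_2|)$ pairwise vertex-disjoint $A_1$--$A_2$ paths---by the duality between treewidth and brambles (or tangles), or by an averaging argument on a tree decomposition of near-minimum width. \emph{(ii) From linkedness to a grid.} Given a well-linked set $A$ that is large enough in terms of $t$, route a first family $\mathcal{P}$ of many pairwise disjoint paths across a bipartition of $A$; then, using well-linkedness of $A$ again with respect to endpoints selected along the paths of $\mathcal{P}$, route a second, transversal family $\mathcal{Q}$, each of whose paths meets many paths of $\mathcal{P}$; finally, regularize the intersection pattern of $\mathcal{P}$ and $\mathcal{Q}$---deleting and rerouting paths and invoking a Ramsey-type argument to force the crossings into a monotone, grid-like order---to expose a $(t\times t)$-grid minor. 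Taking $g(t)$ large enough that step (i) supplies a well-linked set of the size needed in step (ii) completes the proof.

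The main obstacle is the regularization inside movement (ii): converting an arbitrary tangle of crossing paths into the rigid row/column incidence pattern of a grid. This is the technical heart of the original Graph Minors argument of Robertson and Seymour and of the later streamlined proofs, and it is what governs the (very fast) growth rate of $g$. Since only the existence of $f_{\ref{thm:RSTW}}(H)$ is needed here, any such argument suffices and no explicit bound on $g$ is required.
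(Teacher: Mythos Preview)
The paper does not prove this statement at all: Theorem~\ref{thm:RSTW} is simply quoted as a classical result of Robertson and Seymour \cite{GM5}, with no proof or sketch given. So there is no ``paper's own proof'' to compare against; the authors take it as background.

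That said, your outline is the standard route to the result and is correct at the level of a sketch. The reduction to the Excluded Grid Theorem via ``every planar graph embeds as a minor in a large enough grid'' plus minor-transitivity is exactly how one deduces the planar-excluded-minor statement from the grid theorem, and your obstruction argument for non-planar $H$ (grids are planar, have unbounded treewidth, and planarity is minor-closed) is clean and complete. Your two-movement plan for the grid theorem---pass from large treewidth to a large well-linked set via tangle/bramble duality, then weave two transversal path families and Ramsey-clean their crossing pattern into a grid---is a faithful summary of the Robertson--Seymour argument and its later simplifications (Diestel--Jensen--Gorbunov--Thomassen, Chekuri--Chuzhoy, etc.). You are right that the regularization in step~(ii) is where all the work lives; as written it is a plan rather than a proof, but since the theorem is only being cited here, a plan is the appropriate level of detail.
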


\begin{theorem}[Robertson and Seymour \cite{GM1}]\label{thm:RSPW}
For every forest $H$, there is a constant $f_{\ref{thm:RSPW}}=f_{\ref{thm:RSPW}}(H)\in \poi$ such that every graph $G$ with $\pw(G)> f_{\ref{thm:RSPW}}$ has a minor isomorphic to $H$. Moreover, if $H$ is not a forest, then no such constant exists.
\end{theorem}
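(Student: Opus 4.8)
\medskip
\noindent\textbf{Proof proposal.} The plan is to treat the two assertions separately, starting with the easy ``moreover'' part. Suppose $H$ is not a forest, so $H$ contains a cycle. For each $k\in\poi$ let $R_k$ be the complete binary tree of height $k$. Then $R_k$ is a forest, and since every minor of a forest is a forest while $H$ is not, $R_k$ has no minor isomorphic to $H$. On the other hand $\pw(R_k)\to\infty$ as $k\to\infty$ (in fact $\pw(R_k)=\lceil k/2\rceil$; in any case, complete binary trees are a classical example of forests of unbounded pathwidth). Hence for every $c\in\poi$ the graph $R_{2c+2}$ has pathwidth greater than $c$ and no minor isomorphic to $H$, so no constant as in the statement can exist.

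For the main assertion I would prove the sharp quantitative form: if $H$ is a forest on $n$ vertices and $\pw(G)\geq n-1$, then $G$ has a minor isomorphic to $H$; this gives $f_{\ref{thm:RSPW}}(H)=n-2$. First reduce to the case where $H$ is a tree: adding edges between the components of $H$ yields a tree $T$ on the same $n$ vertices with $H$ a minor of $T$, and any graph with a $T$-minor has an $H$-minor; so it suffices to prove, for every tree $T$ on $n$ vertices, that $\pw(G)\geq n-1$ implies $G$ has a $T$-minor. Since the pathwidth of a graph equals the maximum pathwidth of its components (path-decompositions of the components can be concatenated), we may also assume $G$ is connected.

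I would now argue by induction on $n=|V(T)|$; the cases $n\leq 2$ are immediate (for $n=2$, a connected graph with no $K_2$-minor is a single vertex, of pathwidth $0$). For the inductive step, fix a leaf $\ell$ of $T$ with neighbour $p$ and put $T'=T-\ell$, a tree on $n-1$ vertices. If $G$ has no $T'$-minor we are done by the inductive hypothesis, as then $\pw(G)\leq n-3$; so assume $G$ has one. The crucial consequence of ``$G$ has no $T$-minor'' is this: for every family $(B_w)_{w\in V(T')}$ of pairwise disjoint connected vertex subsets of $G$ realising $T'$ as a minor, no vertex of $G$ outside $\bigcup_w B_w$ has a neighbour in $B_p$ (such a vertex, used as $B_\ell$, would extend the model to a $T$-minor); equivalently $N_G(B_p)\subseteq\bigcup_w B_w$. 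I would use this to pin down a bounded set of ``cut'' vertices along which $G$ splits into pieces, each still avoiding a $T'$-minor (or an appropriate rooted variant of it), recursively build path-decompositions of width $\leq n-2$ for the pieces, and concatenate them in the right linear order to get a path-decomposition of $G$ of width $\leq n-2$.

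The main obstacle, and the reason the naive leaf-deletion induction does not close, is linearity: deleting one leaf controls $T'$-minors but not how the resulting pieces of $G$ must be arranged into a \emph{path} (rather than branching) decomposition, and one cannot simply recurse on components of $G$ minus a small separator. To get around this I would strengthen the induction to a rooted statement, roughly: for a connected graph $G$, a vertex $v\in V(G)$, and a tree $T$ on $n$ vertices with a specified leaf $\ell$, if $G$ has no $T$-minor whose model places $v$ in the branch set of $\ell$, then $G$ has a path-decomposition of width $\leq n-2$ whose first bag contains $v$. Carrying the root vertex $v$ through the leaf-deletion step is what allows the pieces to be threaded together linearly; verifying that the rooted hypothesis passes to the relevant pieces and that consecutive boundary bags glue correctly is the technical heart. (If one only wants some constant rather than the sharp $n-2$, one can instead follow Robertson and Seymour's original, far less delicate accounting.)
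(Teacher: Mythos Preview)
The paper does not prove this theorem at all: Theorem~\ref{thm:RSPW} is stated with a citation to Robertson and Seymour \cite{GM1} and used purely as background motivation, so there is no ``paper's own proof'' to compare your proposal against.

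On the merits of your sketch: the ``moreover'' part is fine. For the main assertion you are aiming at the sharp bound $f(H)=|V(H)|-2$, which is not actually in \cite{GM1} (Robertson and Seymour obtained a worse constant there); the sharp bound is the Bienstock--Robertson--Seymour--Thomas theorem. Your reduction to trees and to connected $G$ is correct, and your diagnosis of the obstacle---that naive leaf deletion does not force linearity of the decomposition---is exactly right. The rooted strengthening you propose (carry a vertex $v$ that must sit in the first bag and correspond to a specified leaf of $T$) is in the right spirit, but as written it is only a plan: you have not shown how, after fixing a $T'$-model and the separator $N_G(B_p)\subseteq\bigcup_w B_w$, the graph actually breaks into pieces to which the rooted hypothesis applies, nor how to thread the resulting path-decompositions end-to-end with bags of size at most $n-1$. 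That step is genuinely delicate (one has to control how many branch sets meet the boundary of each piece and argue that the pieces can be linearly ordered), and your proposal stops short of it. So what you have is a correct outline with the hard lemma left unproved, rather than a complete argument.
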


The goal of this series of papers is to study the same questions for induced subgraphs (instead of minors).  Here we prove an analogue of Theorem~\ref{thm:RSPW}.
Our main result, Theorem~\ref{thm:pwisg}, identifies the following as the unavoidable induced subgraphs of graphs with large pathwidth (the exact statement and all necessary definitions will be given in Section~\ref{sec:isg}):
\begin{itemize}
    \item Complete graphs and complete bipartite graphs;
    \item Subdivided binary trees and their line graphs; and
    \item ``Constellations'' that are ``interrupted'' or ``zigzagged.''
\end{itemize}

We will derive Theorem~\ref{thm:pwisg} from another result, Theorem~\ref{thm:main_tree_indm} below.  In turn, Theorem~\ref{thm:main_tree_indm} below is about ``induced minors'':  a containment relation halfway between minors and induced subgraphs. Given a graph $G$, recall that a \textit{minor} of $G$ is a graph obtained from a subgraph of $G$ by repeatedly contracting edges. Sometimes, when we want to define this operation on the class of simple graphs, loops and parallel edges arising in this process are removed. An \textit{induced minor} of $G$ is a graph obtained from an \textit{induced} subgraph of $G$ by repeatedly contracting edges, and removing all loops and parallel edges arising in this process.

\begin{restatable}{theorem}{maintreeindm}\label{thm:main_tree_indm}
    For all $t\in \poi$ and every forest $H$, there is a constant $f_{\ref{thm:main_tree_indm}}=f_{\ref{thm:main_tree_indm}}(t,H)$ such that every graph $G$ with $\pw(G)>f_{\ref{thm:main_tree_indm}}$ has a subgraph isomorphic to $K_{t+1}$, an induced minor isomorphic to $K_{t,t}$ or an induced minor isomorphic to $H$.
\end{restatable}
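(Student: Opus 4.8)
The plan is to prove the contrapositive: assuming $G$ has no subgraph isomorphic to $K_{t+1}$, no induced minor isomorphic to $K_{t,t}$, and no induced minor isomorphic to $H$, I will bound $\pw(G)$ by a function of $t$ and $H$ (recall $H$ is a forest). Let $B_h$ denote the complete binary tree of height $h$. Two elementary observations drive the argument, both using that ``induced minor of an induced minor is an induced minor''. First, for $h$ large in terms of $|V(H)|$, $B_h$ contains $H$ as an induced minor (a sufficiently large complete binary tree contains arbitrarily large subdivided binary trees as induced subtrees, and those contain every forest as an induced minor by routing a subdivision of $H$ with the branch sets kept pairwise far apart); consequently it suffices to find $B_h$, for $h$ as large as we like, as an induced minor of $G$. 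Second, by the same kind of routing, a sufficiently large subdivision of a wall contains $H$ as an induced minor, and so does the line graph of a sufficiently large subdivision of a wall (take the line graph of a routed subdivision of $H$ inside the wall, an induced subgraph of the line graph of the wall, and contract the triangles at the branch vertices).

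\textbf{Step 1: bounding the treewidth.} First I would show that $\tw(G)$ is bounded in terms of $t$ and $H$. Suppose not, so $\tw(G)$ is arbitrarily large. Since $G$ has no $K_{t+1}$ subgraph and no $K_{t,t}$ induced minor, the classification of the unavoidable induced subgraphs of graphs of large treewidth (specialized to bounded clique number and no $K_{t,t}$ induced minor, as developed earlier in this series) implies that $G$ contains, as an induced subgraph, an arbitrarily large subdivision of a wall, or the line graph of an arbitrarily large subdivision of a wall. By the second observation above, $G$ then has $H$ as an induced minor --- a contradiction. Hence $\tw(G)\le w_0$ for some $w_0=w_0(t,H)$. (Note all three outcomes in the theorem are genuinely used here and below: large cliques and large complete bipartite graphs have large treewidth but neither contains $H$ nor a subdivided wall as an induced minor, so they must be captured by the $K_{t+1}$ and $K_{t,t}$ outcomes.)

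\textbf{Step 2: from large pathwidth to a cleaned-up binary tree.} Now suppose, for contradiction, that $\pw(G)$ is large (to be quantified). Fix a tree decomposition $(T,(B_x)_{x\in V(T)})$ of $G$ of width $w_0$ that is lean in the sense of Thomas. Grouping the bags of $G$ along a path decomposition of $T$ shows $\pw(G)\le(\pw(T)+1)(w_0+1)-1$, so $\pw(T)$ is large; since $T$ is a tree, it therefore contains a subdivision of $B_h$ as a subgraph, with $h$ as large as we wish. Pulling this back through the lean decomposition realizes $B_h$ as a minor of $G$ --- the subdivision paths of $B_h$ in $T$ convert into pairwise disjoint connected subgraphs of $G$ with the prescribed adjacencies; this is the standard fact, underlying the proof of Theorem~\ref{thm:RSPW} in \cite{GM1}, that bounded treewidth together with large pathwidth forces a large complete binary tree minor. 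Contracting the branch sets exhibits, as an \emph{induced} minor $M$ of $G$, a graph obtained from $B_h$ by adding some extra edges, with $M$ still having no $K_{t+1}$ subgraph and no $K_{t,t}$ induced minor. It remains to pass from $M$ to a large induced $B_{h'}$ (equivalently, for $h$ large, to $H$ as an induced minor of $M$): a Ramsey-type descent through $B_h$ choosing children so as to control the accumulation of extra edges --- the clique bound limiting local accumulation and the exclusion of $K_{t,t}$ as an induced minor ruling out long-range accumulation, since a vertex with many neighbours spread across pairwise far-apart subtrees would, after suitably contracting those subtrees, create a forbidden induced minor. For $h$ large enough this yields $H$ as an induced minor of $M$, hence of $G$, a contradiction. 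Therefore $\pw(G)$ is bounded in terms of $t$ and $H$, which is the desired conclusion.

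\textbf{Main obstacle.} I expect Step 2, specifically the cleanup of the minor model, to be the crux. The extra edges produced by contracting the branch sets can a priori be numerous and irregularly distributed over $B_h$, and the whole problem is to distill from $B_h$-together-with-these-edges a large induced tree-like structure using only the bounded clique number and the exclusion of $K_{t,t}$ as an induced minor, and to do so efficiently enough that $h'$ remains a usable function of $h$. A secondary difficulty is to pin down the precise form of the large-treewidth structure theorem invoked in Step 1 and to verify that, once both excluded induced minors are imposed, its conclusion really is captured by subdivided walls and their line graphs (with the routine but necessary check that those two families do contain every forest as an induced minor).
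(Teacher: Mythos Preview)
The genuine gap is the cleanup in Step~2. You correctly flag it as the crux, but ``a Ramsey-type descent through $B_h$'' is not an argument, and the problem it hides is essentially as hard as the original theorem. Concretely: $M$ is $B_h$ with arbitrary extra edges subject only to being $t$-tidy, and you need an induced $B_{h'}$-minor inside $M$. A vertex of $M$ can have neighbours in many far-apart subtrees of $B_h$, and while ``no $K_{t,t}$ induced minor'' should eventually bite, making it bite in a way that can be iterated down the tree --- so that pruning at one level does not destroy the structure needed at the next --- is not routine Ramsey. Your Step~1 is also not a citable black box: what earlier papers in the series establish is that $t$-clean graphs of large treewidth contain strong \emph{blocks} (Theorem~\ref{thm:noblock}), not that $t$-tidy graphs of large treewidth contain subdivided walls. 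Lemma~\ref{lem:comp_model_rigid} converts a block into a subdivided complete graph only when the path-interiors can be taken pairwise \emph{anticomplete}, and arranging that is exactly where the difficulty enters.

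The paper takes a different route and never bounds treewidth. From large pathwidth, Hickingbotham's Theorem~\ref{thm:completeminor} gives either an induced $T_{2,r}$-minor directly or a large complete minor, hence large treewidth; the latter plus cleanness yields a strong block (Theorem~\ref{thm:noblock}). From the block the paper extracts a \emph{seedling}: a path $A$ together with many pairwise disjoint $(N(A),Y)$-paths. The dichotomy in Theorem~\ref{thm:obtain_a_seedling} is then: either every such seedling is non-rigid, so its paths can be made pairwise anticomplete and Lemma~\ref{lem:comp_model_rigid} yields a subdivided $K_s$ (hence the tree); or some seedling is rigid. In the rigid case, Theorems~\ref{thm:seedling_branches} and~\ref{thm:seedling_to_tree} exploit the forced interaction among the paths to spawn many smaller rigid seedlings hanging off $A$ and grow an induced $T_{d,r}$-model recursively, level by level. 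This seedling recursion is the paper's replacement for your cleanup, and it is where essentially all the work lies.
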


We find it more convenient to work with minors and induced minors in terms of ``models,'' defined as follows. Let $G=(V(G), E(G))$ be a graph. For $X\subseteq V(G)$, we use both $X$ and $G[X]$ to denote the induced subgraph of $G$ with vertex set $X$ (also called the \textit{subgraph of $G$ induced by $X$}). For $X,Y\subseteq V(G)$, we say that \textit{$X$ and $Y$ are anticomplete in $G$} if $X\cap Y=\varnothing$ and there is no edge in $G$ with an end in $X$ and an end in $Y$. For $x\in V(G)$, we say that  \textit{$x$ is anticomplete to $Y$ in $G$} if $\{x\}$ and $Y$ are anticomplete in $G$. For another graph $H$, an \textit{$H$-model in $G$} is a $|V(H)|$-tuple $(A_v:v\in V(H))$ of pairwise disjoint connected induced subgraphs of $G$ such that for all distinct and adjacent vertices $u,v\in V(H)$ in $H$, the sets $A_u$ and $A_v$ are not anticomplete in $G$. We call $A_v$ \textit{the branch set associated with $v$}. 
We also say that the $H$-model $(A_v:v\in V(H))$ in $G$ is \textit{induced} if for all distinct and non-adjacent vertices $u,v\in V(H)$ in $H$, the branch sets $A_u$ and $A_v$ are anticomplete in $G$. It is straightforward to observe that a graph $G$ has a minor isomorphic to a graph $H$ if and only if there is an $H$-model in $G$, and  $G$ has an induced minor isomorphic to a graph $H$ if and only if there is an induced $H$-model in $G$. 
\medskip

We will state our main result, Theorem~\ref{thm:pwisg}, in Section~\ref{sec:isg}. There we also show how Theorem~\ref{thm:pwisg} follows from Theorem~\ref{thm:main_tree_indm} combined with the main result of one of our earlier papers \cite{tw16} in this series. The remainder of this paper will then be devoted to the proof of Theorem~\ref{thm:main_tree_indm}.

\section{From induced minors to induced subgraphs}\label{sec:isg}

\subsection{Definitions}
The statement of our main result involves several definitions, some of which will also be used in later sections.

Let $G$ be a graph. A \textit{stable set} in $G$ is a set of pairwise non-adjacent vertices in $G$, and a \textit{clique} in $G$ is a set of pairwise adjacent vertices in $G$. Let $X$ be a subset of $V(G)$. We denote by $N_G(X)$ the set of all vertices in $G\setminus X$ with at least one neighbor in $X$. If $X=\{x\}$, then we write $N_G(x)$ for $N_G(\{x\})$. 
For a set $\mca{X}$ of subsets of $V(G)$, we write $V(\mca{X})=\bigcup_{X\in \mca{X}}X$. For a graph $H$, we say that $G$ is \textit{$H$-free} if $G$ has no induced subgraph isomorphic to $H$. The \textit{line graph} of $G$, denoted $L(G)$, is the graph with vertex set $E(G)$ such that $e,f\in E(G)$ are adjacent in  $L(G)$ if and only if $e$ and $f$ share an end in $G$.

Let $d\in \poi$ and let $r\in \poi\cup \{0\}$. We denote by $T_{d,r}$ the unique (up to isomorphism) rooted tree of radius $r$ such that, when $r\geq 1$, the root has degree $d$ and every vertex that is neither the root nor a leaf has degree $d+1$ (see Figure~\ref{fig:regulartree}). For instance, $T_{2,r}$ is the full binary tree of radius $r$. It is well-known \cite{GM1} that for every $r\in \poi$, all subdivisions of $T_{2,2r}$ and their line graphs have pathwidth at least $r$ (see Figure~\ref{fig:binary}).

\begin{figure}[t!]
    \centering
    \includegraphics[width=0.6\linewidth]{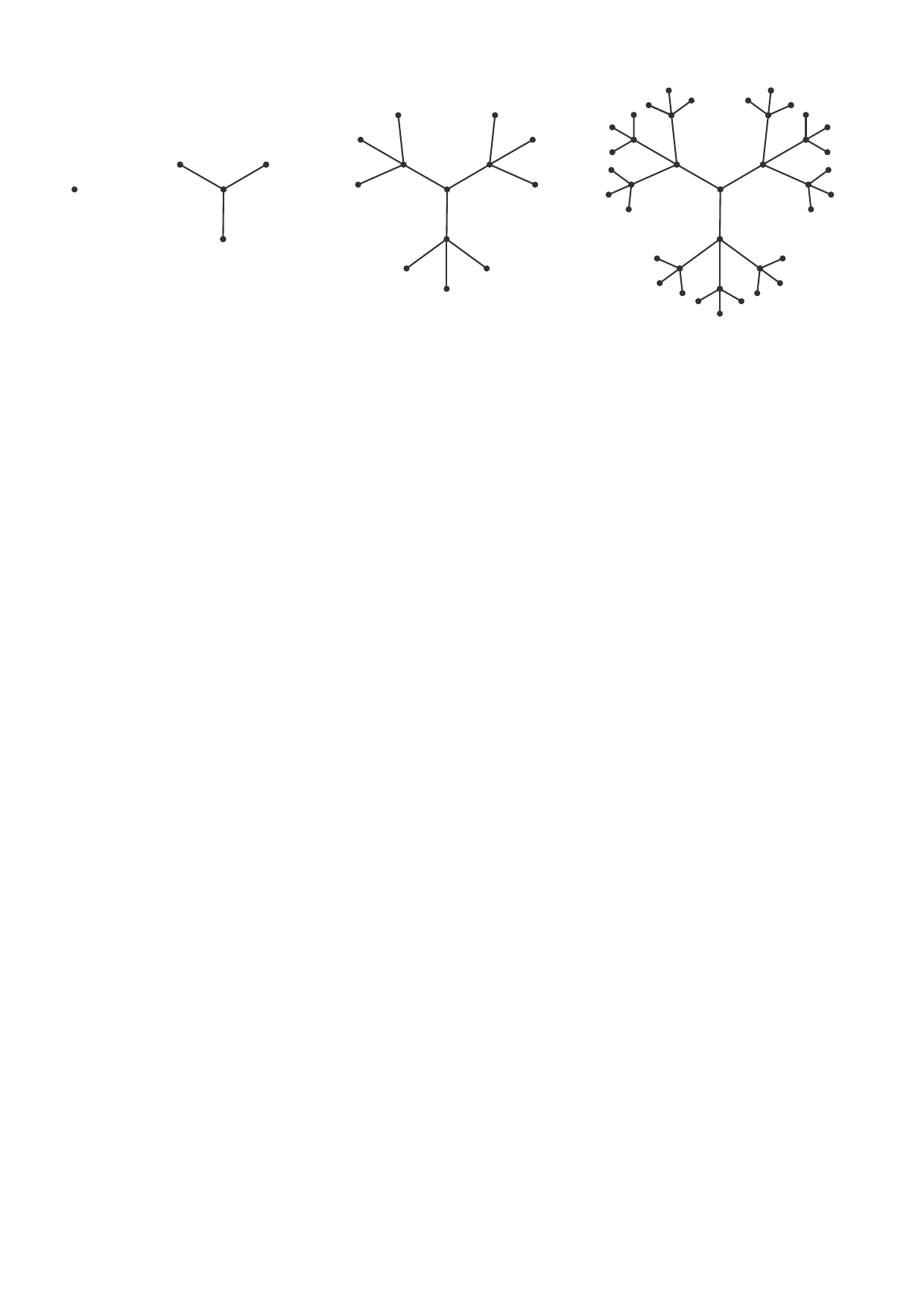}
    \caption{The trees $T_{3,r}$ for $r=0,1,2,3$.}
    \label{fig:regulartree}
\end{figure}
\begin{figure}[t!]
    \centering
    \includegraphics[width=0.7\linewidth]{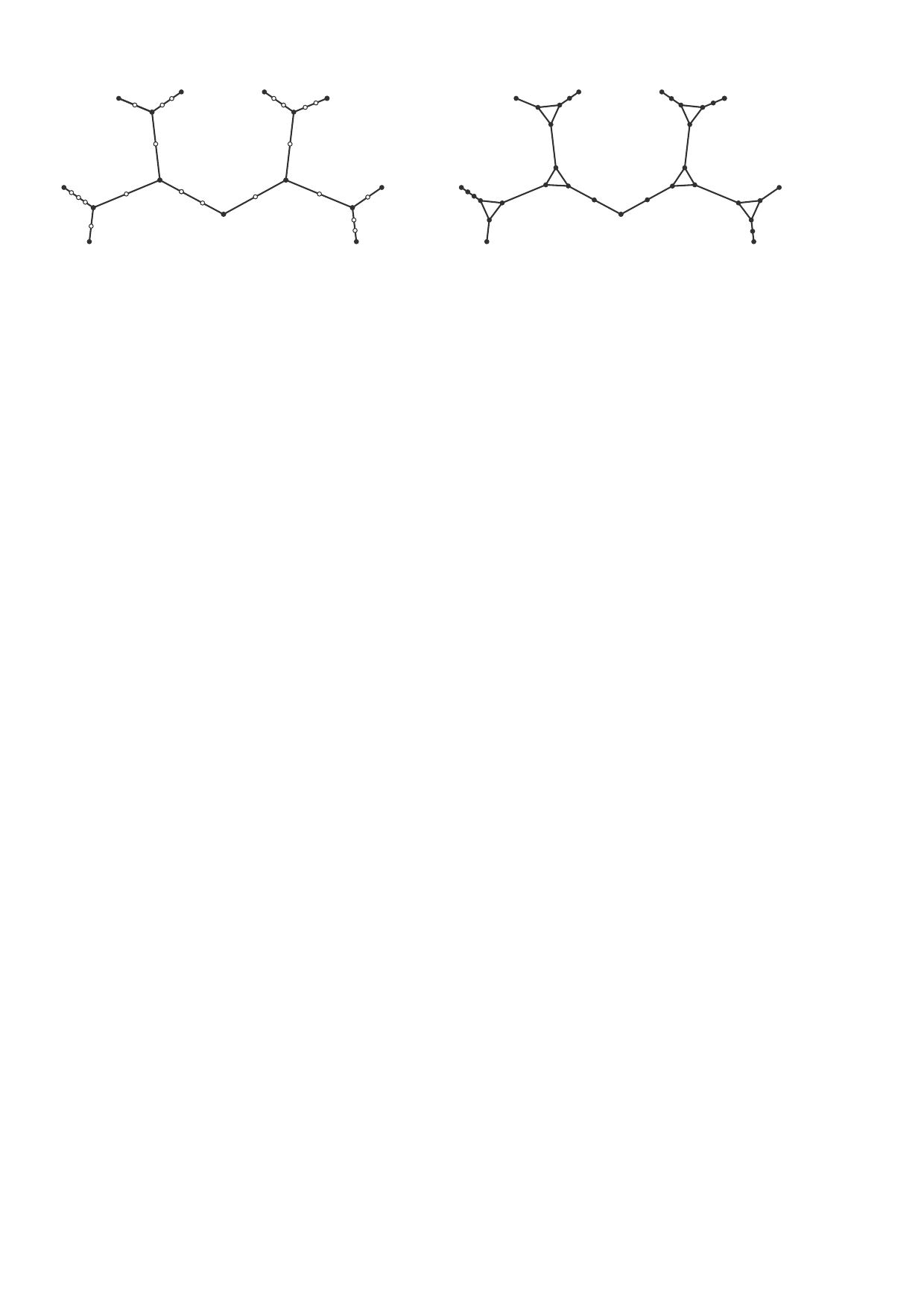}
    \caption{A subdivision of $T_{2,3}$ (left) and its line graph (right).}
    \label{fig:binary}
\end{figure}

For an integer $k$, we denote by $\poi_k$ the set of all positive integers no greater than $k$ (so $\poi_k=\varnothing$ if and only if $k\leq 0$). Let $k\in \poi$ and let $P$ be a $k$-vertex graph which is a path. Then we write $P = p_1 \dd \cdots \dd p_k$ to mean that $V(P) = \{p_1, \dots, p_k\}$ and $E(P)=\{p_ip_{i+1}:i\in \poi_{k-1}\}$. We call the vertices $p_1$ and $p_k$ the \textit{ends of $P$} and refer to $P\setminus \{p_1,p_n\}$ as the \textit{interior of $P$}, denoted $P^*$. For vertices $u,v\in V(P)$, we denote by $u\dd P\dd v$ the subpath of $P$ from $u$ to $v$. The \textit{length} of a path is the number of edges in it. It follows that a path $P$ has distinct ends if and only if $P$ has non-zero length, and $P$ has non-empty interior if and only if $P$ has length at least two. Given a graph $G$, a {\em path in $G$} is an induced subgraph of $G$ which is a path. 

A \textit{constellation} is a graph $\mf{c}$ in which there is a stable set $S_{\mf{c}}$ such that every component of $\mf{c}\setminus S_{\mf{c}}$ is a path, and each vertex $x\in S_{\mf{c}}$ has at least one neighbor in each component of $\mf{c}\setminus S_{\mf{c}}$. We denote by $\mca{L}_{\mf{c}}$ the set of all components $\mf{c}\setminus S_{\mf{c}}$ (each of which is a path), and denote the constellation $\mf{c}$ by the pair $(S_{\mf{c}},\mca{L}_{\mf{c}})$. For $l,s\in \poi$, by an \textit{$(s,l)$-constellation} we mean a constellation $\mf{c}$ with $|S_{\mf{c}}|=s$ and $|\mca{L}_{\mf{c}}|=l$. Given a graph $G$, by an \textit{$(s,l)$-constellation in $G$} we mean an induced subgraph of $G$ which is an $(s,l)$-constellation.

We will need a few notions associated with a constellation $\mf{c} = (S_{\mf{c}}, \mathcal{L}_{\mf{c}})$, which we define below:
\begin{itemize}
    \item By a \textit{$\mf{c}$-route} we mean a path $R$ in $\mf{c}$ with ends in $S_{\mf{c}}$ and with $R^*\subseteq V(\mca{L}_{\mf{c}})$, or equivalently, with $R^*\subseteq L$ for some $L\in \mca{L}_{\mf{c}}$.
    \item For $d\in \poi$, we say that $\mf{c}$ is \textit{$d$-ample} if there is no $\mf{c}$-route of length at most $d+1$. We also say that $\mf{c}$ is \textit{ample} if $\mf{c}$ is $1$-ample. It follows that $\mf{c}$ is ample if and only if no two vertices in $S_{\mf{c}}$ have a common neighbor in $V(\mca{L}_{\mf{c}})$.
    
\item We say that $\mf{c}$ is \textit{interrupted} if there is an enumeration $x_1,\ldots, x_s$ of all vertices in $S_{\mf{c}}$ such that for all $i,j,k\in \poi_{s}$ with $i<j<k$ and every $\mf{c}$-route $R$ from $x_i$ to $x_j$, the vertex $x_k$ has a neighbor in $R$ (see Figure~\ref{fig:interrupted}).

\item For $q\in \poi$, we say that $\mf{c}$ is \textit{$q$-zigzagged} if there is an enumeration $x_1,\ldots, x_s$ of all vertices in $S_{\mf{c}}$ such that for all $i,k\in \poi_{s}$ with $i<k$ and every $\mf{c}$-route $R$ from $x_i$ to $x_k$, fewer than $q$ vertices in $\{x_j: i<j<k\}$ are anticomplete to $R$ in $\mf{c}$ (see Figure~\ref{fig:zigzag}).
\end{itemize}
\begin{figure}[t!]
    \centering
    \includegraphics[width=0.85\linewidth]{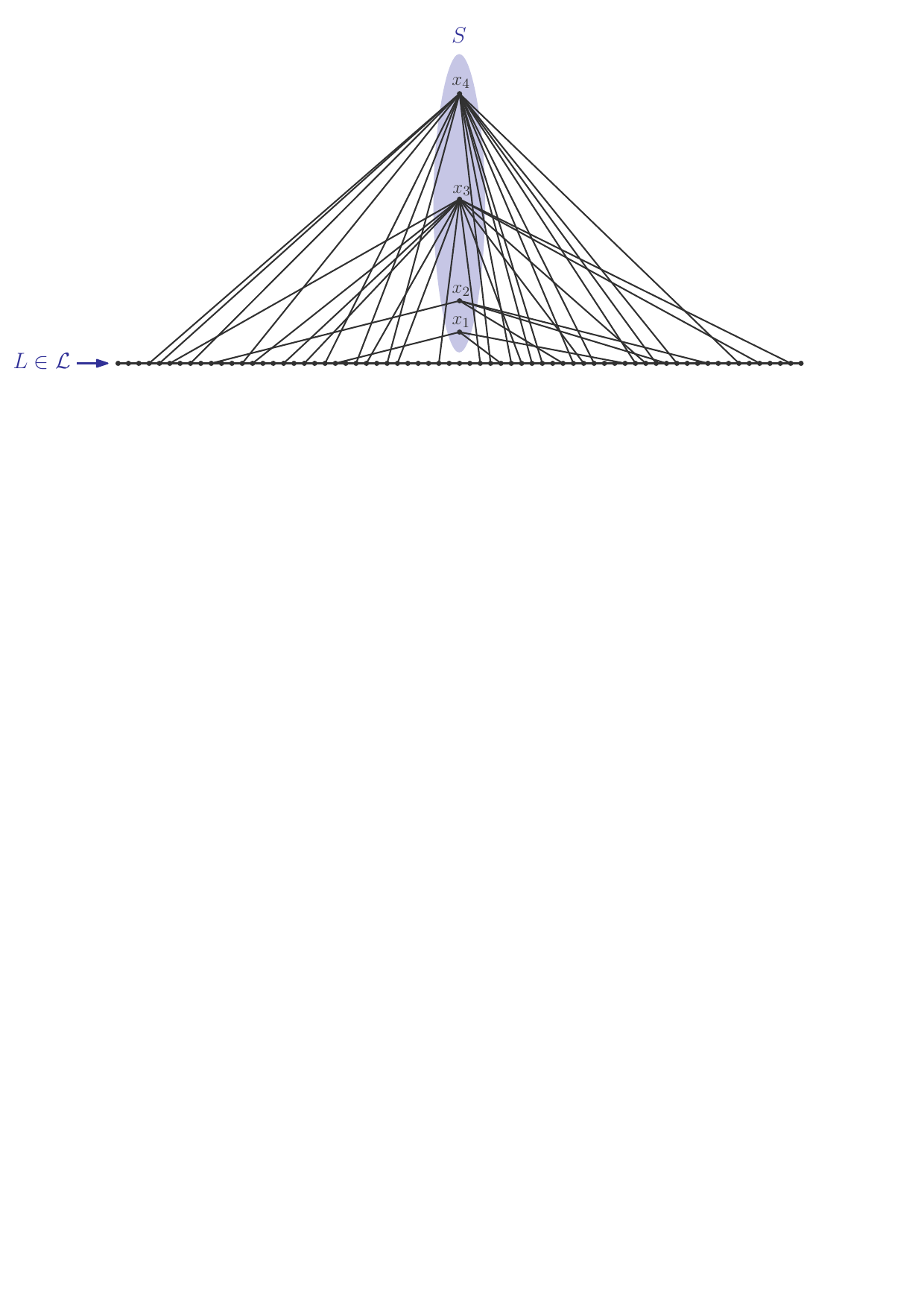}
    \caption{A $(4,1)$-constellation which is ample and interupted.}
    \label{fig:interrupted}
\end{figure}

\begin{figure}[t!]
    \centering
    \includegraphics[width=0.85\linewidth]{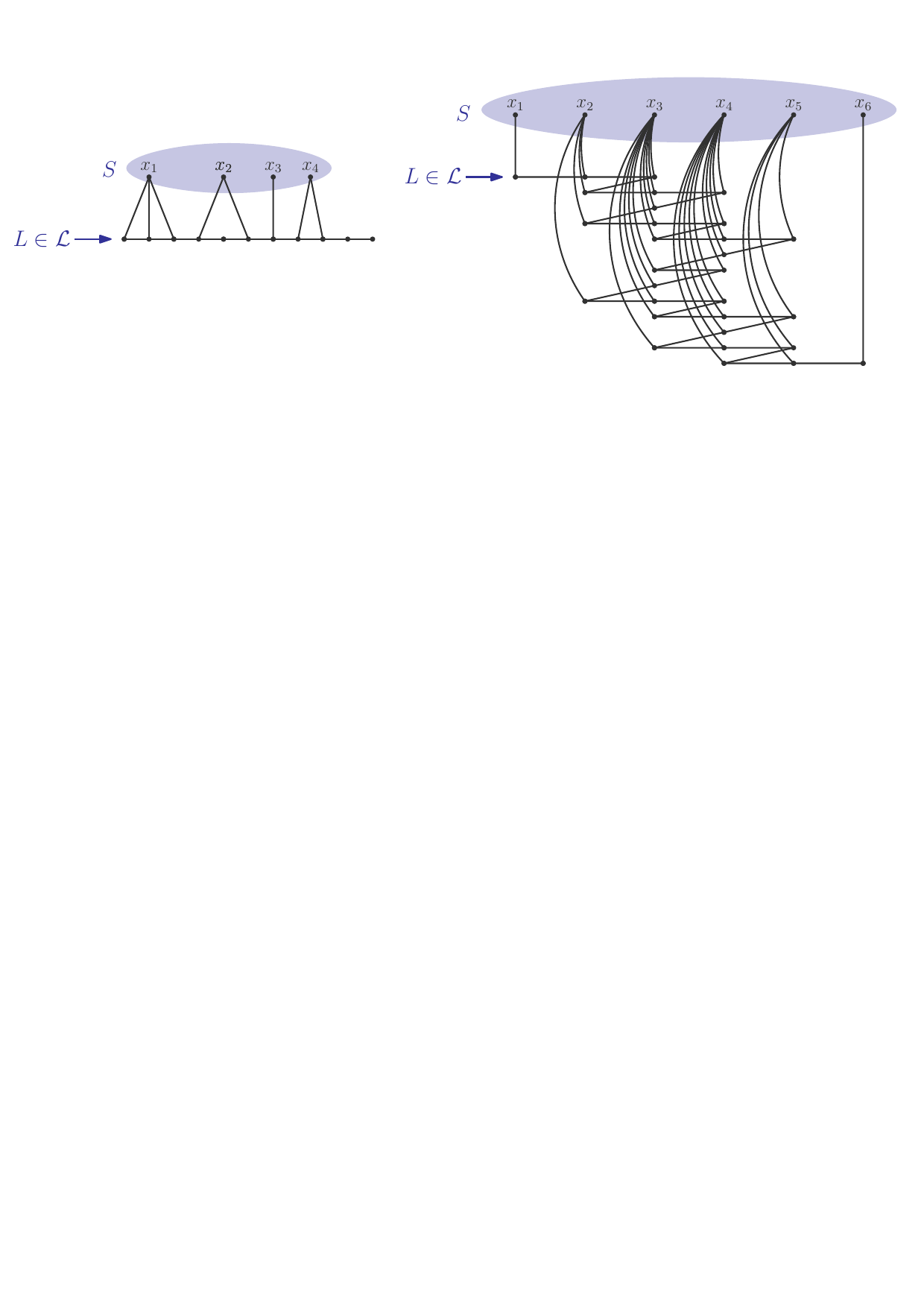}
    \caption{A $(4,1)$-constellation (left) and a $(6,1)$-constellation (right), both ample and
    $1$-zigzagged.}
    \label{fig:zigzag}
\end{figure}

Interrupted constellations form a slight extension of another construction from \cite{tw9, deathstar}, and zigzagged constellations are a fairly substantial generalization of a construction from \cite{davies, pohoata2014unavoidable} (see \cite{tw16} for further discussion).

\subsection{The main result}

With the above definitions in hand, we are now ready to state the main result of this paper:

\begin{theorem}\label{thm:pwisg}
    For all $d,r,l,l',s,s'\in \poi$, there is a constant $f_{\ref{thm:pwisg}}=f_{\ref{thm:pwisg}}(d,r,l,l',s,s')$ such that if $G$ is a graph with $\pw(G)>f_{\ref{thm:pwisg}}$, then one of the following holds.
       \begin{enumerate}[\rm (a)]
        \item\label{thm:pwisg_a} There is an induced subgraph of $G$ isomorphic to $K_{r+1}$, $K_{r,r}$, a subdivision of $T_{2,2r}$ or the line graph of a subdivision of $T_{2,2r}$.
        \item\label{thm:pwisg_b} There is a $d$-ample interrupted $(s,l)$-constellation in $G$. 
    \item\label{thm:pwisg_c} There is a $d$-ample $2^{4r+1}$-zigzagged $(s',l')$-constellation in $G$.
    \end{enumerate}
\end{theorem}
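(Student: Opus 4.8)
The plan is to deduce Theorem~\ref{thm:pwisg} by feeding the conclusion of Theorem~\ref{thm:main_tree_indm} into the main result of the earlier paper \cite{tw16}, whose role is precisely to upgrade ``induced minor'' outcomes to ``induced subgraph'' outcomes. The relevant consequence of \cite{tw16} is that for all $d,r,l,l',s,s'\in\poi$ there is a constant $N_0=N_0(d,r,l,l',s,s')\in\poi$ such that every graph $G$ with an induced minor isomorphic to $K_{N_0,N_0}$ or an induced minor isomorphic to $T_{2,N_0}$ already satisfies one of the three outcomes (a), (b), (c) of Theorem~\ref{thm:pwisg}. In particular, this is the step where a subdivided binary tree, the line graph of one, or an interrupted or a zigzagged constellation is extracted from an induced minor model; here we only have to combine this black box with Theorem~\ref{thm:main_tree_indm}, so the derivation is essentially bookkeeping of constants.

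Concretely, given $d,r,l,l',s,s'\in\poi$, I would first apply \cite{tw16} to obtain $N_0=N_0(d,r,l,l',s,s')$ and set $N:=\max\{r,N_0\}$. Since $T_{2,N_0}$ and $K_{N_0,N_0}$ are induced subgraphs of $T_{2,N}$ and $K_{N,N}$ respectively, they are induced minors of them, and as the induced-minor relation is transitive, the conclusion of \cite{tw16} applies to every graph with an induced minor isomorphic to $K_{N,N}$ or to $T_{2,N}$. As $T_{2,N}$ is a forest, I can now apply Theorem~\ref{thm:main_tree_indm} with $t:=N$ and $H:=T_{2,N}$, obtaining the constant $f_{\ref{thm:main_tree_indm}}(N,T_{2,N})$, and I set $f_{\ref{thm:pwisg}}(d,r,l,l',s,s'):=f_{\ref{thm:main_tree_indm}}(N,T_{2,N})$, which depends only on $d,r,l,l',s,s'$. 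Let $G$ be a graph with $\pw(G)>f_{\ref{thm:pwisg}}$. By Theorem~\ref{thm:main_tree_indm}, $G$ has a subgraph isomorphic to $K_{N+1}$, or an induced minor isomorphic to $K_{N,N}$, or an induced minor isomorphic to $T_{2,N}$. In the first case the vertex set of a $K_{N+1}$ subgraph is a clique of $G$ of size $N+1\geq r+1$, so $G$ has an induced subgraph isomorphic to $K_{r+1}$ and outcome (a) of Theorem~\ref{thm:pwisg} holds. In the remaining two cases $G$ has an induced minor isomorphic to $K_{N,N}$ or to $T_{2,N}$, and hence one of (a), (b), (c) holds by the choice of $N$. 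This proves Theorem~\ref{thm:pwisg}.

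The step doing the real work is thus the invocation of \cite{tw16} together with Theorem~\ref{thm:main_tree_indm} itself, whose proof occupies the rest of the paper; the assembly above is routine. Anticipating where the difficulty lies: for Theorem~\ref{thm:main_tree_indm} one must pass from large pathwidth to a prescribed forest induced minor while controlling the two genuinely unavoidable exceptions, a $K_{t+1}$ subgraph and a $K_{t,t}$ induced minor; and for \cite{tw16} the point is that a $K_{t,t}$ or $T_{2,N}$ induced minor in a graph with no large clique need not contain a subdivided binary tree or its line graph as an induced subgraph, because the model can be ``spread out'' along long induced paths, and ruling this out is exactly what forces the appearance of an interrupted or a zigzagged constellation.
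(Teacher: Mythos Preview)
Your overall approach is the same as the paper's: apply Theorem~\ref{thm:main_tree_indm} and then upgrade the two induced-minor outcomes to the induced-subgraph outcomes of Theorem~\ref{thm:pwisg} using external results. However, you have misstated what \cite{tw16} actually provides. The main result of \cite{tw16} quoted in the paper (Theorem~\ref{thm:motherKtt}) handles \emph{only} the $K_{t,t}$-induced-minor case, and even then its first outcome yields an induced subgraph that is $K_{r,r}$, a subdivision of a wall, or the line graph of a subdivision of a wall --- not a subdivision of a binary tree directly; the paper needs the easy Observation~\ref{obs:trees}\ref{obs:trees_a} to pass from walls to binary trees, and this is also why the parameter fed to Theorem~\ref{thm:motherKtt} is $2^{2r}$ rather than $r$ (which is what produces the specific exponent $2^{4r+1}$ in outcome~(c)). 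The $T_{2,N}$-induced-minor case is handled not by \cite{tw16} at all but by Lemma~\ref{lem:binaryisg}, due to Hickingbotham~\cite{completeminorpw}, which converts a $T_{2,8r}$ induced minor into an induced subgraph that is a subdivision of $T_{2,r}$ or the line graph of one. So your single black box ``$N_0$ from \cite{tw16}'' is really three separate ingredients from two different papers plus an observation, and your proof as written does not go through without unpacking it. Once unpacked, it is exactly the paper's proof.
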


Theorem~\ref{thm:pwisg} is ``qualitatively'' best possible,  in the sense that:
\begin{itemize}
    \item the outcomes of \ref{thm:pwisg} themselves can have arbitrarily large pathwidth; and
    \item the statement of \ref{thm:pwisg} will be false if any of the outcomes is omitted.
\end{itemize}

The first point is straightforward to check, and the second point is easily seen to be true for \ref{thm:pwisg}\ref{thm:pwisg_a}. For \ref{thm:pwisg}\ref{thm:pwisg_b} and \ref{thm:pwisg}\ref{thm:pwisg_c},  the second point follows from the two results below that we proved in \cite{tw16}, and the fact that all constellations are $K_4$-free, and all ample constellations are $K_{3,3}$-free.

\begin{theorem}[Chudnovsky, Hajebi, Spirkl \cite{tw16}]\label{thm:interruptedoutcome}
Let $\mf{c}$ be an ample interrupted constellation. Then $\mf{c}$ has no induced subgraph isomorphic to any of the following.
    \begin{itemize}
       \item An ample $q$-zigzagged $\left(3q+6,6\binom{q+2}{3}\right)$-constellation, where $q\in \poi$.
        \item A subdivision of $T_{2,7}$ or the line graph of a subdivision of $T_{2,7}$. 
    \end{itemize}
\end{theorem}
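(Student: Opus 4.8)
The plan is to forbid the three kinds of induced subgraph separately, the starting point in each case being that every vertex of $\mf{c}$ of degree at least $3$ lies in $S_{\mf{c}}$ (since $\mf{c}\setminus S_{\mf{c}}$ is a union of paths) and that $S_{\mf{c}}$ is stable. For line graphs: suppose $L(W)$ is an induced subgraph of $\mf{c}$, where $W$ is a subdivision of $T_{2,7}$; choose a branch vertex $v$ of $T_{2,7}$ at distance at most $5$ from the root, so that all three neighbours of $v$ in $T_{2,7}$, and hence in $W$, have degree at least $2$, and therefore the three edges $e_1,e_2,e_3$ of $W$ incident to $v$ each have degree at least $3$ in $L(W)$. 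Then $\{e_1,e_2,e_3\}$ is a triangle of $L(W)$ all of whose vertices have degree at least $3$ in $\mf{c}$, hence lie in $S_{\mf{c}}$, contradicting that $S_{\mf{c}}$ is stable. (This case uses neither ampleness nor interruptedness.)

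\emph{Subdivided binary trees.} Suppose $W$, a subdivision of $T_{2,7}$, is an induced subgraph of $\mf{c}$. Every branch vertex of $W$ has degree $3$ in $\mf{c}$, hence lies in $S_{\mf{c}}$. Let $T''$ be the tree obtained from $T_{2,7}$ by suppressing all non-branch vertices, and let $\widetilde W\subseteq W$ be the union of the $W$-paths between $W$-images of $T''$-adjacent branch vertices; then $\widetilde W$ is a subdivision of $T''$ and, being a subtree of the tree $W$, is an induced subgraph of $\mf{c}$. For each edge $e$ of $T''$, the corresponding path $P_e$ of $\widetilde W$ has both ends in $S_{\mf{c}}$; cutting $P_e$ at every vertex it has in $S_{\mf{c}}$ writes it as a concatenation of paths, each of which is a $\mf{c}$-route (an induced path with ends in $S_{\mf{c}}$ whose interior is a connected subpath of $\mf{c}\setminus S_{\mf{c}}$, hence lies in one member of $\mca{L}_{\mf{c}}$). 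Let $\mca{R}$ be the collection of all these route-pieces; then $|\mca{R}|\ge|E(T'')|>3$, and the interiors of distinct members of $\mca{R}$ are pairwise disjoint (distinct $P_e$'s share no edge and meet only in branch vertices) and disjoint from $S_{\mf{c}}$. Let $h$ be the vertex of $V(\widetilde W)\cap S_{\mf{c}}$ that is largest in the interrupted enumeration $x_1,\ldots,x_s$ of $S_{\mf{c}}$. Since $h$ is an end of at most $3$ members of $\mca{R}$, there is $R\in\mca{R}$ not incident to $h$; both ends of $R$ lie in $V(\widetilde W)\cap S_{\mf{c}}\setminus\{h\}$ and hence precede $h$ in the enumeration, so interruptedness provides a neighbour $w$ of $h$ on $R$. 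As $w\in V(R)\subseteq V(\widetilde W)$, the edge $hw$ belongs to $\mf{c}[V(\widetilde W)]=\widetilde W$, so $w$ is a neighbour of $h$ in the subdivided tree $\widetilde W$; thus $w\notin S_{\mf{c}}$ and $w$ lies in the interior of some $R'\in\mca{R}$ incident to $h$. But $w\in V(R)$ and $w\notin S_{\mf{c}}$ also force $w$ into the interior of $R$, and $R\ne R'$, contradicting disjointness of the interiors.

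\emph{Zigzagged constellations, and the main difficulty.} Suppose $\mf{d}$, an ample $q$-zigzagged $(3q+6,6\binom{q+2}{3})$-constellation, is an induced subgraph of $\mf{c}$. Each hub of $\mf{d}$ has a neighbour in each of its (at least $6$) path components, so has degree at least $3$ in $\mf{c}$ and $S_{\mf{d}}\subseteq S_{\mf{c}}$. The aim, exactly as above, is to find the $S_{\mf{c}}$-largest relevant vertex and use interruptedness to force it onto a $\mf{c}$-route to which it is anticomplete in $\mf{d}$, hence (as $\mf{d}$ is induced) in $\mf{c}$ — a contradiction. Two reductions are needed. First, one controls the vertices of $S_{\mf{c}}$ lying on components of $\mf{d}$: the $\mf{c}$-largest vertex among $S_{\mf{d}}$ together with those vertices must be a hub of $\mf{d}$, since otherwise interruptedness would give it a neighbour in a component of $\mf{d}$ other than the one containing it, impossible because distinct components of $\mf{d}$ are anticomplete in $\mf{d}$; iterating and discarding hubs and components, one reduces to the case where every hub of $\mf{d}$ has exactly one neighbour on each component and these neighbours avoid $S_{\mf{c}}$. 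Second, ordering $S_{\mf{d}}$ by the interrupted enumeration and using ampleness of $\mf{c}$ (so that each interior vertex of a $\mf{c}$-route has at most one neighbour in $S_{\mf{c}}$), a pigeonhole over the $\binom{q+2}{3}$ triples drawn from a suitable $(q+2)$-subset of $S_{\mf{d}}$ and the at most $3!$ orders in which such a triple can meet a component yields, among the $6\binom{q+2}{3}$ components, one component $L$ and a triple $a,b,c$ such that the index-largest of $a,b,c$ — say $c$ — has its neighbour on $L$ outside the subpath of $L$ joining the neighbours of $a$ and $b$; then the $\mf{c}$-route from $a$ to $b$ through $L$ has $c$ anticomplete to it in $\mf{d}$, while interruptedness puts a neighbour of $c$ on it. The real work is entirely in this case: unlike for a tree, a route of $\mf{d}$ need not be a route of $\mf{c}$ (its interior may run through $S_{\mf{c}}$), and a hub of $\mf{d}$ may have many neighbours on a single component, so one must carry out the cleaning carefully while simultaneously juggling two linear orders on $S_{\mf{d}}$, the interrupted enumeration inherited from $\mf{c}$ and the zigzag enumeration of $\mf{d}$; it is this bookkeeping that fixes the constants $3q+6$ and $6\binom{q+2}{3}$, and it is the main obstacle.
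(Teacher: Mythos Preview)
First, note that the paper does not give a proof of this theorem: it is quoted from \cite{tw16} as a black box, so there is no in-paper argument to compare against. I can therefore only comment on the correctness of your attempt.

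Your line-graph case and your subdivided-tree case are essentially correct. Both rest on the observation that every vertex of $\mf{c}$ of degree at least $3$ lies in the stable set $S_{\mf{c}}$; the triangle-of-high-degree-vertices argument for $L(W)$ is clean, and in the tree case your device of cutting each branch--branch path at its $S_{\mf{c}}$-vertices into $\mf{c}$-routes and then hitting a route not incident to the $\mf{c}$-maximal vertex works. (A small slip: your $\widetilde W$ only uses edges of $T''$ between branch vertices, so $|\mca{R}|$ is bounded below by the number of those edges, not by $|E(T'')|$; but this is still much larger than $3$.)

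The zigzagged-constellation case, however, has a genuine gap. Observe that if your first reduction succeeds --- so that the components of $\mf{d}$ meet $S_{\mf{c}}$ nowhere --- then every $\mf{d}$-route \emph{is} a $\mf{c}$-route, and interruptedness of $\mf{c}$ forces, for every triple $a,b,c\in S_{\mf{d}}$ with $c$ largest in the $\mf{c}$-enumeration, that $c$ has a neighbour on \emph{every} $\mf{d}$-route from $a$ to $b$. Hence the configuration your pigeonhole is searching for --- a component $L$ on which the $\mf{c}$-largest member of the triple has its neighbour outside the subpath joining the other two --- simply cannot exist. Your pigeonhole is hunting for a violation of a property that holds. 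Relatedly, your argument never actually invokes the $q$-zigzagged hypothesis on $\mf{d}$; yet this hypothesis is indispensable, since an ample interrupted constellation certainly contains large ample constellations as induced subgraphs (for instance itself), so without the zigzag condition the statement is false. The real proof has to play the zigzag enumeration of $\mf{d}$ off against the interrupted enumeration inherited from $\mf{c}$, and the constants $3q+6$ and $6\binom{q+2}{3}$ come from that interaction; your sketch does not do this. Separately, the two reductions you announce --- to components avoiding $S_{\mf{c}}$, and to each hub having a unique neighbour on each component --- are not justified: iteratively discarding the $\mf{c}$-maximal hub only removes hubs and never removes $S_{\mf{c}}$-vertices from the components, and nothing you wrote forces uniqueness of neighbours.
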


\begin{theorem}[Chudnovsky, Hajebi,  Spirkl \cite{tw16}]\label{thm:zigzaggedoutcome}
Let $q\in \poi$ and let $\mf{c}$ be an ample $q$-zigzagged constellation. Then $\mf{c}$ has no induced subgraph isomorphic to any of the following.
    \begin{itemize}
        \item An ample interrupted $(2q+6,1)$-constellation.
        \item A subdivision of $T_{2,64q^2}$ or the line graph of a subdivision of $T_{2,64q^2}$.
    \end{itemize}
\end{theorem}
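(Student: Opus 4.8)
The plan is to prove both bullets by contradiction, in each case using the forbidden induced subgraph to build a single $\mf{c}$-route witnessing a violation of the $q$-zigzag property. Fix the enumeration $x_1,x_2,\dots$ of $S_{\mf c}$ witnessing that $\mf{c}$ is $q$-zigzagged; the target is a pair $i<k$ and a $\mf{c}$-route $R$ from $x_i$ to $x_k$ with at least $q$ of $x_{i+1},\dots,x_{k-1}$ anticomplete to $R$. The common first ingredient is a placement step that locates the forbidden subgraph inside $\mf{c}$. Its basis is the elementary observation that a vertex of $V(\mca{L}_{\mf c})$ has at most two neighbours outside $S_{\mf c}$ (the components of $\mf{c}\setminus S_{\mf c}$ being paths), so: an induced subdivision $W$ of a binary tree has every degree-$3$ vertex either in $S_{\mf c}$ or adjacent to a vertex of $S_{\mf c}\cap V(W)$; and on every induced path of $\mf{c}$, consecutive vertices belonging to $S_{\mf c}$ are far apart, since the subpath between them is a $\mf{c}$-route and hence, as $\mf{c}$ is ample, has length at least three. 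In particular each induced sub-constellation has its paths realised as alternations of long subpaths of paths of $\mca{L}_{\mf c}$ interspersed with sparsely placed vertices of $S_{\mf c}$.

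For the interrupted bullet, suppose $\mf{d}$ is an ample interrupted $(2q+6,1)$-constellation induced in $\mf{c}$, with hubs $y_1,\dots,y_{2q+6}$ in the interrupted order and single path $M$. Using the placement step one reduces to a clean configuration in which each hub lies in $S_{\mf c}$ and every $\mf{d}$-route between hubs is a $\mf{c}$-route; the hubs lying outside $S_{\mf c}$, and the $S_{\mf c}$-vertices interleaved along $M$, are what the additive slack in $2q+6$ is spent on. Now one plays the interrupted order of the hubs against the order that the zigzag enumeration of $S_{\mf c}$ induces on them. Taking the two hubs $y,y'$ that are zigzag-last and a $\mf{d}$-route $R$ between them: the zigzag property applied to $R$ bounds by $q$ the number of hubs that are simultaneously zigzag-between $y$ and $y'$ and anticomplete to $R$, whereas interruptedness of $\mf{d}$ forces every hub that is interrupted-later than both ends of $R$ to meet $R$. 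Since a single path $M$ carries several families of $\mf{d}$-routes between a given pair of hubs --- one per ``gap'' in the attachment pattern on $M$ --- and interruptedness must be obeyed by all of them, combining the constraints shows that with $2q+6$ hubs some family and some route $R$ leave at least $q$ hubs both anticomplete to $R$ and strictly zigzag-between its ends, a contradiction.

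For the subdivided-binary-tree bullet, suppose $W$ is an induced subdivision of $T_{2,64q^2}$ in $\mf{c}$. From the placement step, extract a pairwise non-adjacent set $Y\subseteq S_{\mf c}\cap V(W)$ --- one vertex within distance one of each member of a well-separated family of degree-$3$ vertices of $W$ --- so that $Y$ still carries the full branching of $W$, together with a rich supply of $\mf{c}$-routes inside $W$ joining vertices near the branch vertices of $W$. The adversary has fixed one zigzag order on $S_{\mf c}$, but we may range over the exponentially many such routes afforded by $W$; a pigeonhole/averaging argument then selects a $\mf{c}$-route $R\subseteq V(W)$, from some $x_i$ to some $x_k$, together with at least $q$ elements of $Y$ anticomplete to $R$ in $W$ --- hence in $\mf{c}$, since $W$ is induced --- whose zigzag indices lie strictly between $i$ and $k$. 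The radius $64q^2$ is calibrated so that this many ``avoiding'' vertices survive all of the separation, placement and pigeonhole losses and land in the correct order-interval. The line-graph case follows from the same argument applied inside the line graph, whose branching skeleton is the same, each branch vertex of $W$ becoming a triangle.

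I expect the crux to be the interrupted bullet: reconciling the two linear orders while simultaneously honouring interruptedness across every route-family carried by the single path $M$ is the delicate bookkeeping that determines the constant $2q+6$; the analogous point in the binary-tree bullet --- arranging the pigeonhole over routes so that $64q^2$ levels genuinely suffice --- is the other place requiring care.
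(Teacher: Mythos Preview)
This theorem is not proved in the present paper at all: it is quoted verbatim from \cite{tw16} (note the attribution in the theorem header) and used only as a black box to justify that outcome \ref{thm:pwisg}\ref{thm:pwisg_c} of Theorem~\ref{thm:pwisg} cannot be dropped. So there is no ``paper's own proof'' here to compare your proposal against.

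As for the proposal itself, it is a plan rather than a proof, and the plan has real gaps at the decisive points. In the interrupted bullet, the sentence ``Using the placement step one reduces to a clean configuration in which each hub lies in $S_{\mf c}$ and every $\mf{d}$-route between hubs is a $\mf{c}$-route'' is doing almost all of the work and is not justified: a hub $y_j$ of the sub-constellation $\mf{d}$ can perfectly well lie on some $L\in\mca L_{\mf c}$, and a $\mf d$-route along $M$ may pass through several vertices of $S_{\mf c}$, so it need not be a $\mf c$-route at all. You gesture at ``additive slack in $2q+6$'' to absorb this, but you never say how, and the subsequent order-comparison argument (``taking the two hubs that are zigzag-last\ldots'') presupposes the clean configuration. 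Likewise, ``combining the constraints shows that\ldots some route $R$ leave at least $q$ hubs both anticomplete to $R$ and strictly zigzag-between its ends'' is precisely the statement to be proved, not an argument. For the binary-tree bullet the situation is similar: ``a pigeonhole/averaging argument then selects a $\mf c$-route $R\subseteq V(W)$\ldots whose zigzag indices lie strictly between $i$ and $k$'' is the heart of the matter, and you have not indicated which pigeonhole is being applied, to what family of routes, or why $64q^2$ levels suffice after the various losses you list. If you want to supply a proof here, those two steps are where the actual content lives.
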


As mentioned earlier, Theorem~\ref{thm:pwisg} is a consequence of Theorem~\ref{thm:main_tree_indm} combined with the main result of an earlier paper \cite{tw16} in this series. For every $r\in \poi$, we denote by $W_{r\times r}$ the $r$-by-$r$ hexagonal grid, also known as the $r$-by-$r$ \textit{wall} (see Figure~\ref{fig:wall}).

\begin{figure}
    \centering
    \includegraphics[width=0.5\linewidth]{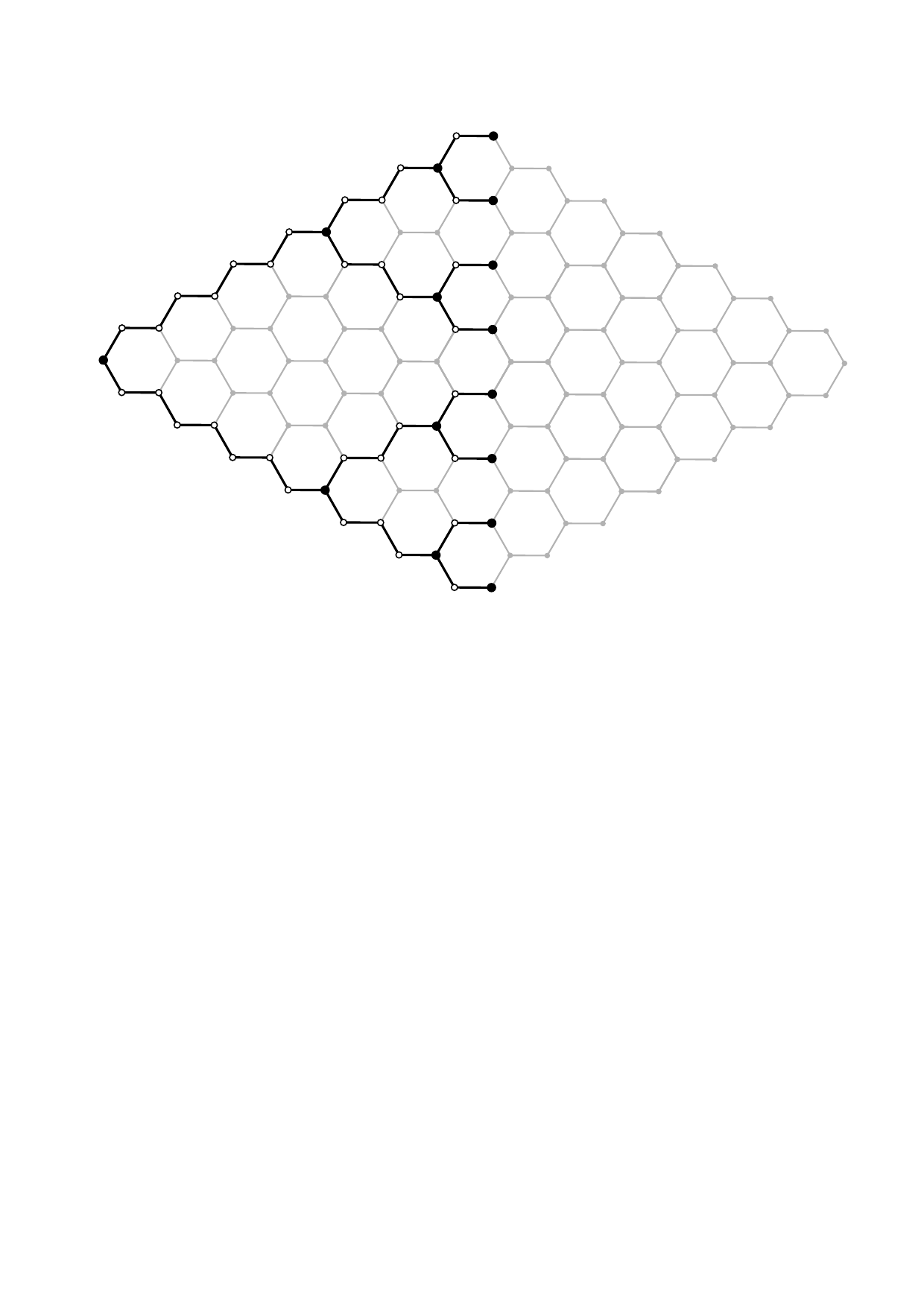}
    \caption{The graph $W_{8\times 8}$, and an induced subgraph of it isomorphic to a proper subdivision of $T_{2,3}$ (as in \ref{obs:trees}\ref{obs:trees_a} for $r=3$).}
    \label{fig:wall}
\end{figure}

\begin{theorem}[Chudnovsky, Hajebi, Spirkl \cite{tw16}]\label{thm:motherKtt}
For all $d,l,l',r,s,s'\in \poi$, there are constants $f_{\ref{thm:motherKtt}}=f_{\ref{thm:motherKtt}}(d,l,l',r,s,s')\in \poi$ and $g_{\ref{thm:motherKtt}}=g_{\ref{thm:motherKtt}}(d,l,l',r,s,s')\in \poi$ such that for every graph $G$ with an induced minor isomorphic to $K_{f_{\ref{thm:motherKtt}}, g_{\ref{thm:motherKtt}}}$ one of the following holds.
   \begin{enumerate}[\rm (a)]
        \item\label{thm:motherKtt_a}There is an induced subgraph of $G$ isomorphic to $K_{r,r}$, a subdivision of $W_{r\times r}$, or the line graph of a subdivision of $W_{r\times r}$.
        \item\label{thm:motherKtt_b} There is a $d$-ample interrupted $(s,l)$-constellation $G$. 
    \item\label{thm:motherKtt_c} There is a $d$-ample $2r^2$-zigzagged $(s',l')$-constellation in $G$.
    \end{enumerate}
\end{theorem}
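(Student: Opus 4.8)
The plan is to take an induced $K_{f,g}$-model in $G$ and simplify it in stages, each stage either producing outcome~(a) outright or replacing the model by a more structured one, until what remains is visibly a $d$-ample interrupted $(s,l)$-constellation (outcome~(b)) or a $d$-ample $2r^2$-zigzagged $(s',l')$-constellation (outcome~(c)). We choose $f_{\ref{thm:motherKtt}}$ and $g_{\ref{thm:motherKtt}}$ to be very large (iterated-Ramsey-sized) functions of $d,l,l',r,s,s'$, and write the model as $(A_i:i\in\poi_f)$ and $(B_j:j\in\poi_g)$: pairwise disjoint connected induced subgraphs, the $A_i$'s mutually anticomplete, the $B_j$'s mutually anticomplete, and for each $i,j$ a fixed edge $\alpha_{ij}\beta_{ij}$ with $\alpha_{ij}\in A_i$ and $\beta_{ij}\in B_j$.

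\textbf{Steps 1--2: reduce to a $d$-ample constellation, or find a wall.} For each $j$, pass to an inclusion-minimal connected subgraph of $B_j$ that still meets the neighbourhood of every $A_i$ for $i$ in a fixed large index set; such a subgraph is a subdivided tree with at most $f$ leaves, each leaf some $\beta_{ij}$. If for many $j$ this tree has a vertex of degree at least $3$, then branch vertices, the paths joining them, and re-linkings through the $A_i$'s, accumulated over many $j$'s, can be assembled into an induced subdivision of $W_{r\times r}$ (or its line graph, or a $K_{r,r}$)---the assembly being arranged by synchronizing, via nested Ramsey arguments, the orders in which branch sets attach along the various paths, followed by a cleaning step of the familiar ``large treewidth yields a clean wall'' type to restore inducedness. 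So we may assume each $B_j$ is an induced path $L_j$. Similarly, for each $i$ either some vertex of $A_i$ is adjacent to $L_j$ for many $j$ (replace $A_i$ by that vertex), or $A_i$ spreads its attachments along many $L_j$'s and---now with $A_i$ playing the role of a long path---the same machinery again yields outcome~(a); after discarding boundedly many indices, each $A_i$ is a single vertex $x_i$, so $\{x_1,\dots,x_f\}$ is stable with a neighbour in each $L_j$. Finally, a $\mf{c}$-route of length at most $d+1$ means two $x_i$'s attach within distance $d+1$ on a common $L_j$; since $g$ is enormous, deleting the few offending $L_j$'s per pair (or relocating attachment points along the long $L_j$'s) leaves a $d$-ample constellation $\mf{c}$ with at least $l$ (resp.\ $l'$) paths and $s$ (resp.\ $s'$) of the $x_i$'s.

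\textbf{Step 3: the interrupted-versus-zigzagged dichotomy.} It remains to enumerate $S_{\mf{c}}$ witnessing that $\mf{c}$ is interrupted or $2r^2$-zigzagged, or else to fall back to~(a); this is the heart of the matter. Record each $x_i$ by its attachment set on each $L_j$, and colour each ordered pair $(x_i,x_{i'})$ by the mutual position of these sets along each $L_j$---one nested in the other, disjoint in the linear order of $L_j$, or crossing---recording this data simultaneously over all $L_j$; a Ramsey argument produces a large homogeneous set of $x_i$'s. In the homogeneous ``nested'' case, ordering the $x_i$'s by containment of attachment intervals gives an interrupted enumeration (any $\mf{c}$-route between two of them lies inside the attachment interval of every later one), hence~(b). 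In the homogeneous ``disjoint'' case, ordering them along the common linear order yields an enumeration in which every middle-indexed $x_j$ attaches between the extremes on every $L$, so it is $1$-zigzagged; and if a bounded residue of crossing survives the cleaning one still obtains, by pushing the Ramsey parameters high enough, a $q$-zigzagged enumeration with $q\le 2r^2$, hence~(c). In the ``crossing'' case, the staircase of crossing attachments has large treewidth and, being suitably clean (or else containing $K_{r,r}$), contains an induced subdivision of $W_{r\times r}$ or its line graph, hence~(a).

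\textbf{Main obstacle.} The crux is concentrated in Steps~1 and~3: proving that whenever the attachment patterns of many branch sets along many paths fail to be globally consistent---neither ``nested'' (interrupted) nor ``disjoint'' (zigzagged)---the failure is genuinely wall-like, and, more delicately, carrying out each wall extraction while preserving \emph{inducedness}, which for induced substructures one cannot obtain by freely discarding vertices or contracting edges. Propagating the Ramsey degrees through all the steps to obtain the stated bounds (the $2r^2$ in~(c), and explicit $f_{\ref{thm:motherKtt}},g_{\ref{thm:motherKtt}}$) is the remaining quantitative work.
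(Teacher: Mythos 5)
First, note that Theorem~\ref{thm:motherKtt} is not proved in this paper at all: it is imported verbatim from \cite{tw16}, where it is the main result. So there is no in-paper proof to compare against; what you have written must stand on its own as a proof of that external theorem, and it does not. Your text is a roadmap whose two hardest steps are explicitly deferred (your own ``Main obstacle'' paragraph concedes that the wall extractions and the preservation of inducedness are ``the crux''), and a proof sketch that names its central difficulty without resolving it has a genuine gap precisely there. Concretely: every place where you claim that an inconsistent attachment pattern ``can be assembled into an induced subdivision of $W_{r\times r}$ (or its line graph)'' is an assertion, not an argument. Extracting an \emph{induced} wall subdivision from branching or crossing attachment data is the substance of \cite{tw16} (and of several earlier papers in the series); it cannot be obtained ``by a cleaning step of the familiar type,'' because the familiar large-treewidth-to-wall arguments produce walls as minors or topological minors, not as induced subgraphs. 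In particular, your treatment of the ``crossing'' case in Step~3 --- ``the staircase of crossing attachments has large treewidth and, being suitably clean (or else containing $K_{r,r}$), contains an induced subdivision of $W_{r\times r}$'' --- is circular: large treewidth yields an induced wall subdivision only in a $t$-clean graph, cleanness requires excluding exactly the induced wall subdivisions and line graphs thereof that you are trying to produce, and the failure of cleanness could also be witnessed by a large clique, which is not among the outcomes \ref{thm:motherKtt}\ref{thm:motherKtt_a}--\ref{thm:motherKtt_c}.

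Several of the quantitative reductions are also not sound as stated. In Step~2, to make the constellation $d$-ample you propose ``deleting the few offending $L_j$'s per pair (or relocating attachment points)''; but a single pair $x_i,x_{i'}$ may attach within distance $d+1$ on \emph{every} $L_j$, in which case no amount of deleting paths helps and attachment points cannot be ``relocated'' (they are determined by the edges of $G$). Handling this requires a further Ramsey step over pairs of $x_i$'s and an analysis of what a large family of mutually ``close'' pairs forces --- which again lands you in the induced-wall/$K_{r,r}$ extraction problem. Similarly, the dichotomy ``either some vertex of $A_i$ is adjacent to $L_j$ for many $j$, or $A_i$ spreads its attachments and the same machinery yields outcome (a)'' is asserted without the machinery being specified, and the intermediate case (attachments spread over a bounded but nontrivial portion of $A_i$) is not addressed. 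The overall architecture --- reduce the $B_j$'s to paths and the $A_i$'s to a stable set, then run a nested/disjoint/crossing trichotomy on attachment intervals to split into interrupted, zigzagged, and wall-like outcomes --- is a plausible reconstruction of the strategy of \cite{tw16}, but as written the proposal proves none of the steps where the actual difficulty lives.
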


In addition to Theorem \ref{thm:motherKtt}, we need the following observation about the presence of binary tree induced minors in walls (see Figure~\ref{fig:wall}) and of general tree induced minors in binary trees (see Figure~\ref{fig:binaryindm}). The proofs are easy and we omit them.

\begin{observation}\label{obs:trees}
    The following hold for all $d,r\in \poi$.
    \begin{enumerate}[{\rm (a)}]
        \item\label{obs:trees_a} There is an induced subgraph of $W_{2^r\times 2^r}$ isomorphic to a proper subdivision of $T_{2,r}$. Also, there is an induced subgraph of the line graph of $W_{2^r\times 2^r}$ isomorphic to the line graph of a proper subdivision of $T_{2,r}$. Consequently, if $W$ is a subdivision of $W_{2^r\times 2^r}$, then both $W$ and its line graph have an induced minor isomorphic to $T_{2,r}$.
        \item\label{obs:trees_b} There is an induced minor of $T_{2,dr}$ isomorphic to $T_{2^d,r}$.
    \end{enumerate}
\end{observation}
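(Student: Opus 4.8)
The plan is to prove the two parts independently, in each case by exhibiting an explicit (induced) model. For part \ref{obs:trees_a}, first assertion, I would prove by induction on $r$ the following mild strengthening: for every $n\ge 2^r$, the wall $W_{n\times n}$ has an induced subgraph $S$ isomorphic to a proper subdivision of $T_{2,r}$ in which the vertex playing the role of the root lies in the first row (and otherwise $S$ stays away from the first row and from one of the side columns). The base case is trivial, since a proper subdivision of $T_{2,1}$ is just a sufficiently long induced path. For the inductive step one splits the columns of $W_{2^r\times 2^r}$ into a ``left'' and a ``right'' sub-wall, reserves the top row or two, applies the inductive hypothesis inside each sub-wall to obtain an induced proper subdivision of $T_{2,r-1}$ whose root sits at the top of that sub-wall, and then routes two long induced paths in the reserved top strip, one joining a fresh root vertex to the left sub-root and one to the right sub-root. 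The two copies of $T_{2,r-1}$ are kept anticomplete by leaving a column between the sub-walls, and keeping the union induced (and every edge still subdivided) is where the bookkeeping lies. This step is the only real work: one must either use the economical H-tree layout, which fits a depth-$r$ binary tree inside roughly $2^{r/2}\times 2^{r/2}\ll 2^r\times 2^r$, or simply observe that the crude bound $2^r$ leaves enormous slack for the separators and connecting paths; constants will not be tracked.

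The remaining assertions of \ref{obs:trees_a} follow formally. If $S$ is an induced subgraph of a graph $G$, then two edges of $S$ share an end in $S$ if and only if they share an end in $G$, so $L(S)$ is exactly the subgraph of $L(G)$ induced on $E(S)$; applying this to the $S$ produced above yields the second assertion. For the ``consequently'' part, let $W$ be a subdivision of $W_{2^r\times 2^r}$ and let $S'\subseteq W$ be obtained from $S$ by replacing each edge of $S$ with the path of $W$ subdividing it; then $S'$ is induced in $W$ and is again a proper subdivision of $T_{2,r}$, so contracting its subdivided edges exhibits $T_{2,r}$ as an induced minor of $W$. Also $L(S')$ is induced in $L(W)$ and is the line graph of a proper subdivision $T'$ of $T=T_{2,r}$; to extract $T$ as an induced minor, root $T$ and, writing $e^1_v,\ldots,e^{m_v}_v$ (with $m_v\ge 2$) for the edges of $T'$ along the subdivided edge between $v\neq\mathrm{root}$ and its parent, assign to the root the branch set consisting of the first edges of its two child-paths, and to every other $v$ the branch set $\{e^2_v,\ldots,e^{m_v}_v\}$ together with the first edges of the child-paths of $v$. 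Each branch set is connected, they partition $E(T')$, consecutive edges of a child-path give the adjacency between a vertex and its child, and since every edge of $T'$ incident to a fixed original vertex lands in a single branch set, there are no other adjacencies between distinct branch sets; hence the model is induced.

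For part \ref{obs:trees_b}, I would group the $dr$ levels of $T_{2,dr}$ into $r$ consecutive blocks of $d$ levels and index the branch sets of a $T_{2^d,r}$-model by the vertices of $T_{2,dr}$ at levels $0,d,2d,\ldots,rd$: to a level-$jd$ vertex $u$ assign the branch set consisting of $u$ together with all its descendants at levels $jd+1,\ldots,jd+d-1$, i.e.\ the complete binary subtree of depth $d-1$ hanging from $u$. These subtrees are connected and pairwise vertex-disjoint; a level-$jd$ vertex has exactly $2^d$ descendants at level $(j+1)d$, each a child in $T_{2,dr}$ of a leaf of its subtree, which supplies precisely the $2^d$ children and the required adjacencies of $T_{2^d,r}$; and vertices from the same block or from non-consecutive blocks are anticomplete, so the model is induced. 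Thus $T_{2,dr}$ has an induced minor isomorphic to $T_{2^d,r}$.
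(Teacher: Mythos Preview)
The paper does not prove this observation at all; it states that ``the proofs are easy and we omit them,'' so there is no argument to compare your proposal against. Your proposal supplies a correct proof in the spirit the authors evidently had in mind.

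A couple of remarks. For part~\ref{obs:trees_a}, first assertion, your inductive scheme is the right idea, but the strengthened hypothesis you state (root in the first row, the rest of $S$ avoiding the first row and one side column) is slightly too weak as written: to drop an induced connecting path from the new root down to a sub-root you also need the column above the sub-root to be free, not just a side column. This is easily fixed by sharpening the invariant (e.g.\ require the root to sit in a specified corner and the rest of $S$ to avoid both the top row and the root's column), and since the bound $2^r$ is extremely slack compared to the $O(2^{r/2})$ that an H-tree layout actually needs, there is ample room. Your derivation of the line-graph assertion from the first via the fact that $L(\cdot)$ preserves induced subgraphs is clean, and your explicit induced $T_{2,r}$-model in $L(T')$ (sending $e^1_v$ to the parent's branch set and $e^2_v,\ldots,e^{m_v}_v$ plus the first edges of $v$'s child-paths to $v$'s branch set) is correct and nicely handles the ``consequently'' clause for line graphs.

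Your argument for part~\ref{obs:trees_b} is exactly the intended one (it matches the picture in Figure~\ref{fig:binaryindm}): the depth-$(d-1)$ subtrees hanging from the level-$jd$ vertices are the branch sets, and the $2^d$ level-$(j{+}1)d$ descendants give the children in $T_{2^d,r}$.
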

\begin{figure}
    \centering
    \includegraphics[width=0.5\linewidth]{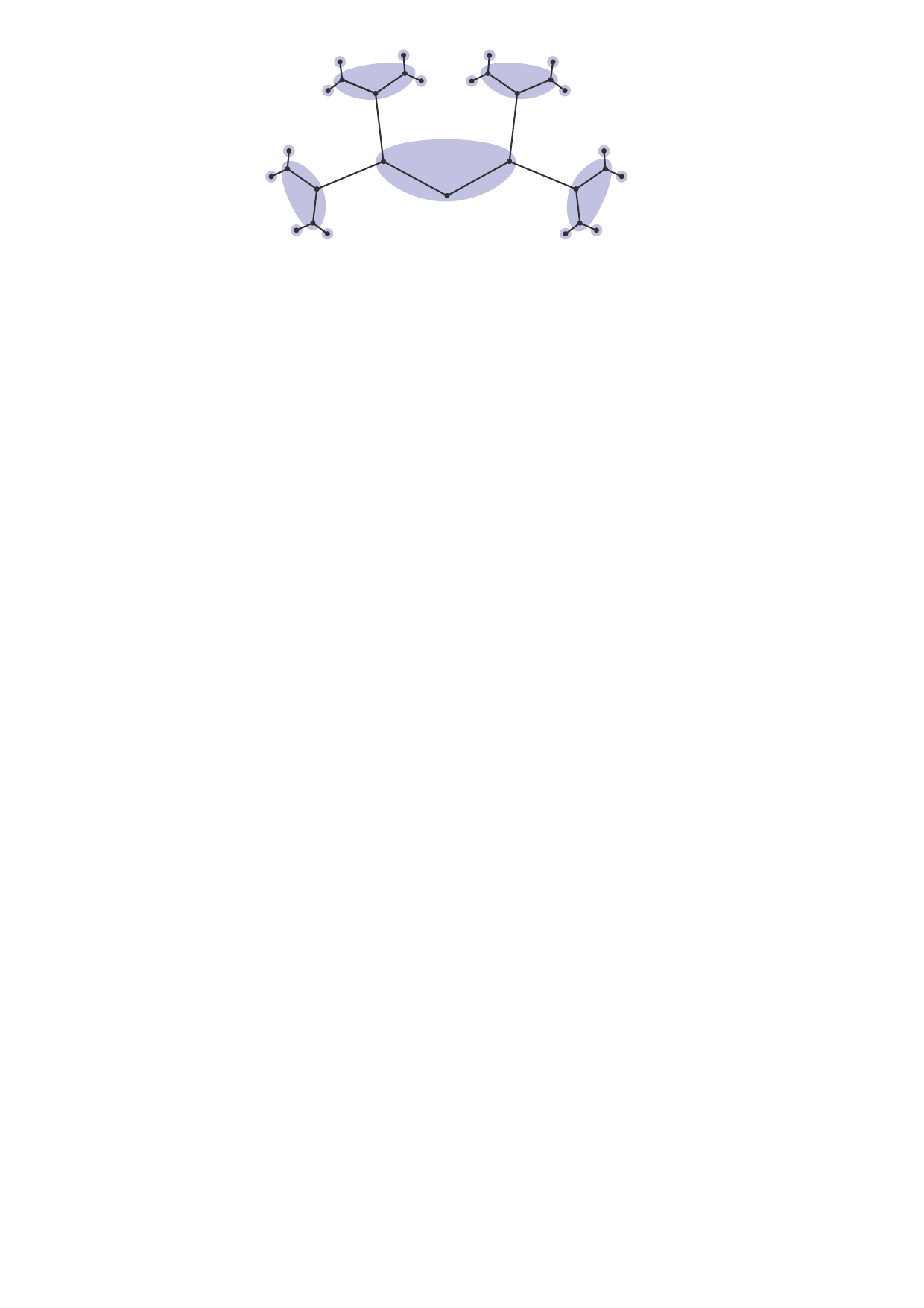}
    \caption{An induced $T_{4,2}$-model in $T_{2,4}$ (as in \ref{obs:trees}\ref{obs:trees_b} for $d=r=2$).}
    \label{fig:binaryindm}
\end{figure}

We will also use the following from \cite{completeminorpw}:

\begin{lemma}[Hickingbotham \cite{completeminorpw}]\label{lem:binaryisg}
    For every $r\in \poi$, if $G$ is a graph with an induced minor isomorphic to $T_{2,8r}$, then $G$ has an induced subgraph isomorphic to either a subdivision of $T_{2,r}$ or the line graph of a subdivision of $T_{2,r}$.
\end{lemma}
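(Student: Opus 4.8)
The plan is to start from an induced $T_{2,8r}$-model $(A_v:v\in V(T_{2,8r}))$ in $G$ and to ``purify'' it, branch set by branch set, into an induced subgraph of $G$ isomorphic to a subdivision of $T_{2,r}$ or to the line graph of one. For each edge $uv$ of $T_{2,8r}$, fix an edge of $G$ between $A_u$ and $A_v$ (which exists since these sets are not anticomplete), with ends $w_{uv}\in A_u$ and $w_{vu}\in A_v$; for each $v$ put $W_v=\{w_{vu}:uv\in E(T_{2,8r})\}$, so that $|W_v|\le 3$ because $T_{2,8r}$ has maximum degree three. The whole argument rests on understanding the ``shape'' of a minimal connected induced subgraph of $A_v$ containing $W_v$, and then on splicing these shapes together along the chosen edges.

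\textbf{Local analysis.} I would first establish the following finite statement: \emph{if $B$ is a connected graph and $w_1,w_2,w_3$ are three (not necessarily distinct) of its vertices, then $B$ has a connected induced subgraph $C$ with $w_1,w_2,w_3\in V(C)$ such that $C$ is isomorphic to a path, to a subdivision of $K_{1,3}$ whose leaves are $w_1,w_2,w_3$, or to the line graph of a subdivision of $K_{1,3}$ whose ``ends'' are $w_1,w_2,w_3$.} For the proof, take $C$ minimal (under passing to induced subgraphs) subject to being connected and containing $\{w_1,w_2,w_3\}$; then every vertex of $V(C)\setminus\{w_1,w_2,w_3\}$ is a cut-vertex of $C$, so the block--cut-vertex tree of $C$ has at most three leaves, and every block containing no $w_i$ in its interior is a ``pass-through'' block that can be replaced by a shortest induced path between its two attachment vertices. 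One is left with a path, or a $Y$ of paths, at most three of whose pieces are replaced by $2$-connected blocks, each carrying one of the $w_i$; a short case analysis inside these (finitely many ``roles'') yields one of the three listed shapes. Applying this inside each internal branch set $A_v$, with $w_1,w_2,w_3$ the portals in $W_v$ (leaf branch sets are trimmed to single vertices), produces at each internal vertex $v$ a gadget which is either a subdivision of $K_{1,3}$ (a \emph{subdivision gadget}) or is path-like with one portal $w$ in its interior; in the latter case, reattaching at $w$ the edge of $G$ leaving $A_v$ turns the gadget either into a subdivision of $K_{1,3}$ after all, or into a triangle with three pendant paths, i.e.\ the line graph of a subdivision of $K_{1,3}$ (a \emph{line-graph gadget}).

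\textbf{Global assembly.} The two target outcomes are robustly incomparable: a subdivision of $T_{2,r}$ is triangle-free whereas $L(\text{subdivision of }T_{2,r})$ has triangles, and $L(\text{subdivision of }T_{2,r})$ has no induced subdivision of $K_{1,3}$ whereas a subdivision of $T_{2,r}$ (with $r\ge 1$) does. Hence the gadget types are ``sticky,'' and the strategy is to locate a topological-minor copy of $T_{2,r}$ inside $T_{2,8r}$ whose branch vertices (the degree-$3$ ones) all carry the same gadget type, letting the ``edges'' of this copy be routed as paths through arbitrary remaining branch sets, which can serve as pass-throughs irrespective of their type. A pigeonhole argument down the $8r$ levels of $T_{2,8r}$ produces such a copy at the cost of only a bounded factor in the radius, which is what the constant $8$ accounts for. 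Splicing the corresponding gadgets along the surviving portal edges then yields an induced subgraph of $G$ isomorphic to a subdivision of $T_{2,r}$ or to $L(\text{subdivision of }T_{2,r})$, as required. Here it is essential that the initial model is \emph{induced}: branch sets indexed by non-adjacent vertices of $T_{2,8r}$ are anticomplete, so no ``long chords'' arise between gadgets attached at unrelated branch points --- precisely the phenomenon that breaks the analogous statement for non-induced subgraph models, as shown by the complete graph, which for $r\ge 2$ contains a subdivision of $T_{2,r}$ as a subgraph but has no induced subgraph isomorphic to a subdivision of $T_{2,r}$ or to the line graph of one.

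\textbf{Main obstacle.} I expect the hardest part to be the local analysis together with the control of adjacencies between gadgets of adjacent branch sets: one must show that a connected graph with three prescribed vertices always contains one of the three prescribed shapes as an \emph{induced} subgraph with the vertices in admissible roles (the $2$-connected blocks being the delicate case), and that splicing two adjacent gadgets along their portal edge does not introduce extra adjacencies that destroy the induced-subdivision or induced-line-graph structure. Verifying that the global pigeonhole loss really is bounded by the factor $8$ is the other point requiring care.
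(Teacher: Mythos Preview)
The paper does not prove this lemma at all: it is quoted verbatim from Hickingbotham~\cite{completeminorpw} and used as a black box in the derivation of Theorem~\ref{thm:pwisg}. So there is no ``paper's own proof'' to compare your proposal against.

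That said, your sketch is broadly the right shape for how such results are proved (and is in the spirit of Hickingbotham's argument): one analyses, for each degree-$3$ vertex $v$ of the model tree, a minimal connected induced subgraph of the branch set $A_v$ spanning the three portals, classifies it as a subdivided claw or a ``triangle with three legs,'' and then uses a depth-loss argument to find a subtree all of whose branch vertices carry the same gadget type. Two points deserve more care than your write-up gives them. First, your ``local analysis'' statement is not literally true as stated: a minimal connected induced subgraph on three terminals need not globally be a path, a subdivided $K_{1,3}$, or the line graph of one; what one actually shows is that it \emph{contains} such an induced structure with the terminals in the correct roles, after possibly shortening the legs and discarding surplus vertices of a $2$-connected block. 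Second, the splicing step (``no extra adjacencies between adjacent gadgets'') is where the argument can genuinely fail if done carelessly: the portal vertex $w_{uv}$ may have several neighbours in the adjacent gadget inside $A_u$, and one has to reroute or absorb these to preserve the induced-subdivision or induced-line-graph structure. These are exactly the places where Hickingbotham's proof does real work; your proposal correctly identifies them as the main obstacles but does not yet discharge them.
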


Let us now deduce Theorem~\ref{thm:pwisg}:

\begin{proof}[Proof of Theorem~\ref{thm:pwisg}]
Let 
$$\phi=f_{\ref{thm:motherKtt}}(d,l,l',2^{2r},s,s')$$
and let
$$\gamma=g_{\ref{thm:motherKtt}}(d,l,l',2^{2r},s,s').$$

We claim that 
$$f_{\ref{thm:pwisg}}=f_{\ref{thm:pwisg}}(d,l,l',r,s,s')=f_{\ref{thm:main_tree_indm}}(\max\{r,\phi,\gamma\},T_{2,16r})$$
satisfies the theorem.

Let $G$ be a graph of pathwidth larger than $f_{\ref{thm:pwisg}}$. From Theorem~\ref{thm:main_tree_indm}, it follows that $G$ has a subgraph isomorphic to $K_{r+1}$, an induced minor isomorphic to $K_{\phi,\gamma}$ or an induced minor isomorphic to $T_{2,16r}$. In the former case, \ref{thm:pwisg}\ref{thm:pwisg_a} holds. Also, if $G$ has an induced minor isomorphic to $T_{2,16r}$, then by Lemma~\ref{lem:binaryisg}, $G$ has an induced subgraph isomorphic to either a subdivision of $T_{2,2r}$ or the line graph of a subdivision of $T_{2,2r}$, and again \ref{thm:pwisg}\ref{thm:pwisg_a} holds. Therefore, we may assume that $G$ has an induced minor isomorphic to $K_{\phi,\gamma}$. By the choice of $\phi,\gamma$, we can apply Theorem~\ref{thm:motherKtt} to $G$. Note that \ref{thm:motherKtt}\ref{thm:motherKtt_a} along with Observation~\ref{obs:trees}\ref{obs:trees_a} (and the fact that $2^{2r}\geq r$) implies \ref{thm:pwisg}\ref{thm:pwisg_a}. Moreover, \ref{thm:motherKtt}\ref{thm:motherKtt_b} directly implies \ref{thm:pwisg}\ref{thm:pwisg_b}, and \ref{thm:motherKtt}\ref{thm:motherKtt_c}  directly implies \ref{thm:pwisg}\ref{thm:pwisg_c}. This completes the proof of Theorem~\ref{thm:pwisg}. 
\end{proof}

\section{Seedlings and overview of the proof of Theorem~\ref{thm:main_tree_indm}}\label{sec:outline}

Here we give an overview of the steps in the proof of Theorem~\ref{thm:main_tree_indm}, beginning with some definitions.

Let $G$ be a graph and let $X,Y\subseteq V(G)$. An \textit{$(X,Y)$-path in $G$} is a path $P$ in $G$ that has an end in $X$ and an end in $Y$, and subject to this property, $V(P)$ is minimal with respect to inclusion. Equivalently,  an $(X,Y)$-path $P$ in $G$ is a path in $G$ such that either
\begin{itemize}
    \item $P$ has length zero and $P\subseteq X\cap Y$; or
    \item $P$ has non-zero length, one end of $P$ belongs to $X\setminus Y$, the other end of $P$ belongs to $Y\setminus X$, and we have $P^*\cap (X\cup Y)=\varnothing$.
\end{itemize}
In particular, every $(X,Y)$-path $P$ in $G$ has an end in $X$ and an end in $Y$. We call the former the \textit{$X$-end of $P$} and the latter the \textit{$Y$-end of $P$}. So the unique vertex of a zero-length $(X,Y)$-path $P$ is both the $X$-end and the $Y$-end of $P$.
\medskip

Next, we define what we call a ``seedling,'' a notion central to almost all of our proofs in this paper. Let $G$ be a graph and let $\lambda\in \poi$. A \textit{$\lambda$-seedling in $G$} is a triple $(A,\mca{L},Y)$ with the following specifications (see Figure~\ref{fig:seedling}):
\begin{itemize}
\item $A$ is a path in $G$;
\item $Y\subseteq V(G)\setminus A$; and
\item $\mca{L}$ is a set of $\lambda$ pairwise disjoint $(N_G(A),Y)$-paths in $G\setminus A$.
\end{itemize}
It follows in particular that $A\cap V(\mca{L})=\varnothing$, and for every $L\in \mca{L}$, the $N(A)$-end of $L$ is the only vertex in $L$ with a neighbor in $A$.
\begin{figure}
    \centering
\includegraphics[width=0.4\linewidth]{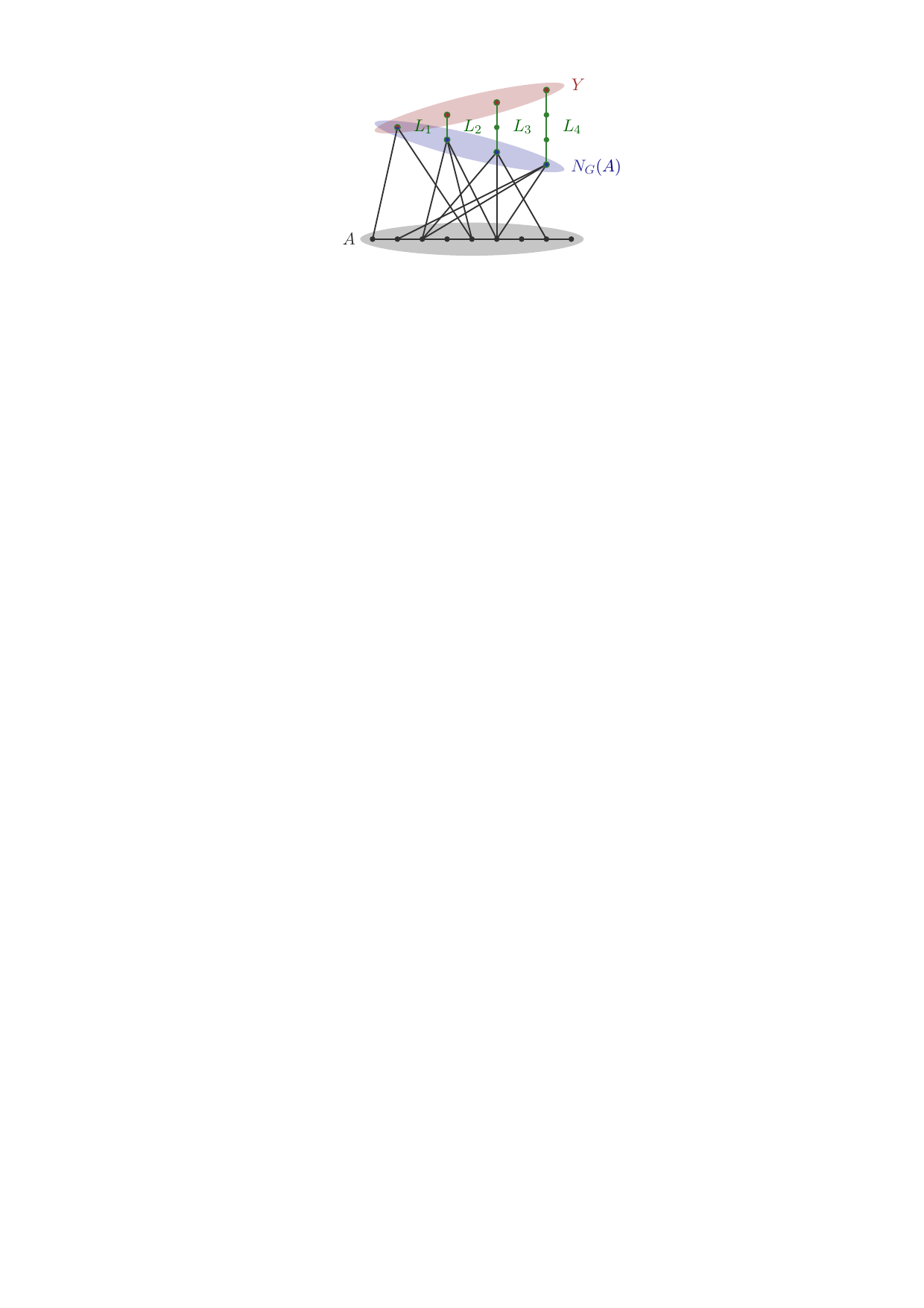}
    \caption{A $4$-seedling $(A,\{L_1,L_2,L_3,L_4\}, Y)$.}
    \label{fig:seedling}
\end{figure}

By a \textit{seedling in $G$} we mean a $\lambda$-seedling in $G$ for some $\lambda\in \poi$. Two seedlings $(A,\mca{L},Y)$ and $(A',\mca{L}',Y')$ in a graph $G$ are \textit{disjoint} if $(A\cup V(\mca{L}))\cap (A'\cup V(\mca{L}'))=\varnothing$. We say that a seedling $(A,\mca{L},Y)$ in $G$ is \textit{$\kappa$-rigid}, where $\kappa\in \poi$, if there is no set $\mca{K}$ of $\kappa$ pairwise anticomplete $(N(A),Y)$-paths in $G$ such that $V(\mca{K})\subseteq V(\mca{L})$.

For $t\in \poi$, we say that a graph $G$ is \textit{$t$-tidy} if $G$ is $K_{t+1}$-free and has no induced minor isomorphic to $K_{t,t}$. Then Theorem~\ref{thm:main_tree_indm} says for all $t\in \poi$ and every forest $H$, every $t$-tidy graph of sufficiently large pathwidth has an induced minor isomorphic to $H$. Roughly, the proof of Theorem~\ref{thm:main_tree_indm} is in three steps. The first step is to prove the following. Note that $g_{\ref{thm:obtain_a_seedling}}$ does not depend on $\lambda$.

\begin{restatable}{theorem}{obtainaseedling}\label{thm:obtain_a_seedling}
    For all $d,r,t,\lambda\in \poi$, there are constants $f_{\ref{thm:obtain_a_seedling}}=f_{\ref{thm:obtain_a_seedling}}(d,r,t,\lambda)\in \poi$ and $g_{\ref{thm:obtain_a_seedling}}=g_{\ref{thm:obtain_a_seedling}}(d,r,t)\in \poi$ such that for every $t$-tidy graph $G$ with $\pw(G)>f_{\ref{thm:obtain_a_seedling}}$ one of the following holds.
    \begin{enumerate}[{\rm(a)}]
    \item \label{thm:obtain_a_seedling_a} $G$ has an induced minor isomorphic to $T_{d,r}$. 
    \item \label{thm:obtain_a_seedling_b} There is a $\lambda$-seedling in $G$ which is $g_{\ref{thm:obtain_a_seedling}}$-rigid.
    \end{enumerate}
\end{restatable}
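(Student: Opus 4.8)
The plan is to argue by induction on $r$ (treating $d$, $t$, $\lambda$ as parameters), using Theorem~\ref{thm:RSPW} to get a foothold and then splitting according to whether an extracted seedling is rigid. The motivation is this: an \emph{induced} model of a large complete binary tree in $G$ would already give outcome~\ref{thm:obtain_a_seedling_a}, since by Observation~\ref{obs:trees}\ref{obs:trees_b} the tree $T_{2,dr}$ has an induced minor isomorphic to $T_{2^d,r}$, and $T_{2^d,r}$ contains $T_{d,r}$ as an induced subgraph (keep $d$ of the $2^d$ children at each vertex), so restricting an induced $T_{2,dr}$-model yields an induced $T_{d,r}$-model. Theorem~\ref{thm:RSPW} supplies a (not necessarily induced) model of an arbitrarily large complete binary tree in any graph of large enough pathwidth, so all the content lies in cleaning it up, and outcome~\ref{thm:obtain_a_seedling_b} is what we fall back on when that cleanup is obstructed.

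\emph{The extraction step.} With $f_{\ref{thm:obtain_a_seedling}}(d,r,t,\lambda)$ chosen large, Theorem~\ref{thm:RSPW} applied to a sufficiently large complete binary tree gives such a minor model in $G$. From it I would carve out, by routine model-manipulation combined with $t$-tidiness, an ``induced-position'' configuration: a path $A$ in $G$; a set $\mathcal{L}=\{L_1,\dots,L_\lambda\}$ of pairwise disjoint pendant paths in $G\setminus A$, where $L_i$ runs from $N_G(A)$ to a vertex $y_i\notin N_G(A)$ and is adjacent to $A$ only at its $N_G(A)$-end; and connected, pairwise anticomplete ``pendant regions'' $Z_1,\dots,Z_\lambda$, each of pathwidth exceeding $f_{\ref{thm:obtain_a_seedling}}(d,r-1,t,\lambda)$, such that $Z_i$ is anticomplete to $A$ and to $V(\mathcal{L})\setminus\{y_i\}$ while $y_i$ has a neighbour in $Z_i$. (Picture $A$ as realising a long path of the tree, $L_i$ as a vertex-minimal path into a pendant subtree $S_i$ terminating at a vertex $y_i$, and $Z_i$ as a large subtree hanging below $y_i$ in $S_i$.) With $Y=\{y_1,\dots,y_\lambda\}$, the triple $(A,\mathcal{L},Y)$ is a $\lambda$-seedling in $G$. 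This extraction is where $t$-tidiness — chiefly $K_{t,t}$-induced-minor-freeness — is used, to discard the boundedly many branch sets whose cross-edges would break the induced position, and I expect it to be the main obstacle: realising a \emph{path} $A$ adjacent to many branch sets while keeping the pendant regions both large and clean requires genuine care.

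\emph{The rigid case, and the reduction in the non-rigid case.} Fix $g_{\ref{thm:obtain_a_seedling}}(d,r,t)$ to be a constant, at least $d$ and non-decreasing in $r$. If $(A,\mathcal{L},Y)$ is $g_{\ref{thm:obtain_a_seedling}}(d,r,t)$-rigid, outcome~\ref{thm:obtain_a_seedling_b} holds. (Had the extraction produced more than $\lambda$ pendant paths, we could trim to exactly $\lambda$ first, since a sub-seedling of a rigid seedling is rigid: shrinking $\mathcal{L}$ only shrinks the pool of admissible anticomplete paths. Also, if $\lambda<d$ the seedling is automatically $d$-rigid, because every $(N_G(A),Y)$-path inside $V(\mathcal{L})$ has its $N_G(A)$-end equal to that of one of the $\lambda$ pendant paths, so at most $\lambda$ of them are pairwise disjoint.) Otherwise there are $g_{\ref{thm:obtain_a_seedling}}(d,r,t)\ge d$ pairwise anticomplete $(N_G(A),Y)$-paths with vertex sets in $V(\mathcal{L})$; being disjoint they have distinct $Y$-ends, and being $(N_G(A),Y)$-paths each is adjacent to $A$ only at its $N_G(A)$-end, so after relabelling we may select $d$ of them, $K_1,\dots,K_d$, with $K_i$ running from $N_G(A)$ to $y_i$ (so $K_i$ is ``associated'' to the region $Z_i$), pairwise anticomplete.

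\emph{Building the tree.} Apply the inductive hypothesis with parameters $d,r-1,t,\lambda$ to each $t$-tidy graph $Z_i$ (whose pathwidth exceeds $f_{\ref{thm:obtain_a_seedling}}(d,r-1,t,\lambda)$). If some $Z_i$ yields a $g_{\ref{thm:obtain_a_seedling}}(d,r-1,t)$-rigid $\lambda$-seedling, this is also $g_{\ref{thm:obtain_a_seedling}}(d,r,t)$-rigid and lies in $G$, so outcome~\ref{thm:obtain_a_seedling_b} holds. Otherwise each $Z_i$ yields an induced $T_{d,r-1}$-model $M_i\subseteq Z_i$; since the $Z_i$ are pairwise anticomplete and anticomplete to $A$, and $K_j\subseteq V(\mathcal{L})\setminus\{y_i\}$ for $j\neq i$, the models $M_1,\dots,M_d$ are pairwise anticomplete and anticomplete to $A$ and to every $K_j$. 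Now assemble an induced $T_{d,r}$-model: take $V(A)$ as the root branch set, and for each $i$ enlarge the root branch set of $M_i$ to absorb $K_i$ together with a path in $Z_i$ from $y_i$ to $M_i$ internally disjoint from $V(M_i)$ — arranged by a re-routing argument (or by proving a slightly stronger ``anchored'' version of the statement, in which the root branch set of the output is reachable from a prescribed vertex) — so that this enlarged branch set becomes adjacent to $V(A)$ and exactly the prescribed adjacencies hold; this is an induced $T_{d,r}$-model, giving outcome~\ref{thm:obtain_a_seedling_a}. For the base case $r=1$ no regions or recursion are needed: $A$ together with $d$ pairwise anticomplete pendant paths is already an induced $K_{1,d}=T_{d,1}$-model.
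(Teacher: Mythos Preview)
The extraction step is a genuine gap, not routine model-manipulation. You assert that $t$-tidiness lets you ``discard the boundedly many branch sets whose cross-edges would break the induced position'' and thereby obtain $\lambda$ pairwise anticomplete pendant regions $Z_i$ of large pathwidth. But nothing prevents \emph{all} candidate regions from being pairwise non-anticomplete: if the branch sets $R_1,\dots,R_\Lambda$ of $\Lambda$ far-apart subtrees in your tree minor are pairwise touching in $G$, you obtain an induced $K_\Lambda$-model, and this does \emph{not} by itself contradict $t$-tidiness --- every induced minor of a complete graph is again a complete graph, so $K_\Lambda$ has no induced $K_{t,t}$-minor for $t\geq 2$. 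Converting ``many pairwise touching connected sets in a $K_{t+1}$-free graph'' into an induced $K_{t,t}$-minor or some other usable outcome is a real theorem, essentially what Lemma~\ref{lem:comp_model_rigid} provides, and even there additional strong-block structure (many internally disjoint paths between each pair, with pairwise anticomplete interiors) is required. You have neither arranged that structure nor invoked such a lemma. The same difficulty recurs when you ask that $Z_i$ be anticomplete to $V(\mathcal{L})\setminus\{y_i\}$. (The anchoring issue in your assembly step is real but secondary; strengthening the inductive statement to an ``anchored'' version would handle it.)

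For comparison, the paper's proof is not inductive on $r$ and never attempts to clean a tree minor. It applies Theorem~\ref{thm:completeminor}: large pathwidth gives either an induced $T_{2,dr}$-minor --- hence outcome~\ref{thm:obtain_a_seedling_a} via Observation~\ref{obs:trees}\ref{obs:trees_b} --- or a large $K_s$-minor, hence large treewidth. Since $G$ is then $\max\{2^{dr},t\}$-clean, Theorem~\ref{thm:noblock} yields a strong $(\phi^t,\lambda)$-block; Ramsey gives a stable set $S\subseteq B$ of size $\phi$, and for each pair $\{x,y\}\subseteq S$ the triple $(\{x\},\{P^*:P\in\mathcal P_{\{x,y\}}\},N_G(y))$ is a $\lambda$-seedling. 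If any such seedling is $g_{\ref{thm:obtain_a_seedling}}$-rigid, outcome~\ref{thm:obtain_a_seedling_b} holds; if none is, each pair yields $g_{\ref{thm:obtain_a_seedling}}$ paths with pairwise anticomplete interiors, and Lemma~\ref{lem:comp_model_rigid} applied to the resulting strong block gives a proper subdivision of $K_{rd^r+1}$ (which contains $T_{d,r}$) or an induced $K_{t,t}$-minor, contradicting $t$-tidiness. So $g_{\ref{thm:obtain_a_seedling}}=g_{\ref{lem:comp_model_rigid}}(rd^r+1,t,t)$ is dictated by what Lemma~\ref{lem:comp_model_rigid} needs; your proposed $g=d$ is only viable because the hard work has been deferred to the unproved extraction.
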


The second step is to prove the following. Note, again, that $g_{\ref{thm:seedling_branches}}$ does not depend on $\lambda$ (nor on $\delta$; but that does not matter much).

\begin{restatable}{theorem}{seedlingbranches}\label{thm:seedling_branches}
    For all $t,\delta,\lambda,\kappa\in \poi$, there are constants $f_{\ref{thm:seedling_branches}}=f_{\ref{thm:seedling_branches}}(t, \delta,\lambda,\kappa)\in \poi$ and $g_{\ref{thm:seedling_branches}}=g_{\ref{thm:seedling_branches}}(t,\kappa)\in \poi$ with the following property. Let $G$ be a $t$-tidy graph and let $(A,\mca{L},Y)$ be an $f_{\ref{thm:seedling_branches}}$-seedling in $G$ which is $\kappa$-rigid. Then there are $\delta$ pairwise disjoint $\lambda$-seedlings $(A_1,\mca{L}_1,Y_1),\ldots, (A_{\delta},\mca{L}_{\delta},Y_{\delta})$ in $G\setminus A$ with the following specifications.
    \begin{enumerate}[{\rm (a)}]
    \item \label{thm:seedling_branches_a} The paths $A_1,\ldots, A_{\delta}$ are pairwise anticomplete in $G$.
    \item \label{thm:seedling_branches_b} For every $i\in \poi_{\delta}$, we have:
    \begin{itemize}
        \item $A$ and $A_i$ are not anticomplete in $G$;
        \item $A$ and $V(\mca{L}_i)$ are anticomplete in $G$; and
    \item $(A_i,\mca{L}_i,Y_i)$ is $g_{\ref{thm:seedling_branches}}$-rigid. 
    \end{itemize}
    \end{enumerate}
\end{restatable}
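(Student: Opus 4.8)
The plan is to build the $\delta$ desired seedlings inside $A\cup V(\mca{L})$, by first extracting from $\mca{L}$ one long induced path to serve as a common ``backbone'' and then carving out $\delta$ well-separated sub-seedlings along it. First I would extract the backbone. Since $(A,\mca{L},Y)$ is $\kappa$-rigid, no $\kappa$ members of $\mca{L}$ are pairwise anticomplete, so the ``conflict graph'' on $\mca{L}$ (with $L\sim L'$ iff $L,L'$ are not anticomplete) has independence number $<\kappa$; applying Ramsey's theorem to this conflict graph --- this is where $f_{\ref{thm:seedling_branches}}$ must be taken large --- we may pass to a subfamily of $\mca{L}$, as large as we wish, no two of whose members are anticomplete. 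In particular there is a single $Q\in\mca{L}$ not anticomplete to a set $\mca{L}_0$ of $N$ other members of $\mca{L}$, where $N$ is a parameter chosen later in terms of $t,\delta,\lambda,\kappa$. Recall $Q$ is an induced path, meeting $A$ only at its $N_G(A)$-end $q_0$, and running from $q_0$ to $Y$.

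Next I would spread $\delta$ batches along $Q$. For each $L\in\mca{L}_0$ let $I(L)$ be the minimal subpath of $Q$ containing every neighbour of $L$ on $Q$ --- a nonempty ``interval'' of the induced path $Q$. A Helly-type greedy argument on these intervals yields one of two conclusions: either (i) there are $\delta$ batches $\mca{B}_1,\dots,\mca{B}_\delta$ of distinct members of $\mca{L}_0$, each of size at least $\lambda+1$, such that each member of $\mca{B}_i$ attaches to $Q$ only within a subpath $Q_i$ of $Q$, with $Q_1,\dots,Q_\delta$ pairwise far apart along $Q$; or (ii) a short subpath of $Q$ is met by almost all of $\mca{L}_0$, and hence (pigeonhole) a single vertex $w^{*}\in V(Q)$ is adjacent to a still-huge subfamily, no two of whose members are anticomplete. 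Case (ii) is the crux, and I expect it to be the main obstacle: I would iterate, passing to the subfamily adjacent to $w^{*}$ and repeating the dichotomy, each round either producing the spread-out structure of (i) or a new common-neighbour vertex; after $t+1$ failed rounds one gets $t+1$ vertices jointly adjacent to a large family $\mca{F}$ of disjoint paths no two of which are anticomplete, and then --- using $K_{t+1}$-freeness to pick inside $t$ suitable members of $\mca{F}$ connected pieces that each meet all $t+1$ vertices and are pairwise anticomplete (forcing these pieces apart is the delicate point, and it is exactly where $K_{t,t}$-induced-minor-freeness is genuinely used) --- one produces an induced $K_{t,t}$-minor in $G$, contradicting that $G$ is $t$-tidy. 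So case (i) holds.

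Then I would assemble the seedlings. In each batch $\mca{B}_i$ distinguish one member $L^{(i)}$ and let $A_i$ be a cleaned induced path obtained from $Q_i\cup L^{(i)}$ that runs from one end of $Q_i$ along part of $L^{(i)}$ to the $N_G(A)$-end of $L^{(i)}$; then $A_i\subseteq G\setminus A$ and $A_i$ is not anticomplete to $A$ (it contains that end). Here a second delicate point arises --- making $A_1,\dots,A_\delta$ pairwise anticomplete --- which I would handle by a further $K_{t+1}$-free Ramsey / independent-transversal selection of the $L^{(i)}$, so that the relevant ``backward portions'' of distinct $L^{(i)}$ are pairwise anticomplete; combined with $Q$ being induced, the $Q_i$ far apart, and the attachments of each $L^{(i)}$ to $Q$ confined to $Q_i$, this gives \ref{thm:seedling_branches}\ref{thm:seedling_branches_a}. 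Set $Y_i=Y\setminus A_i$ and let $\mca{L}_i$ consist of $\lambda$ cleaned sub-paths of the remaining members of $\mca{B}_i$, each running from its attachment on $Q_i$ out to its $Y$-end; these omit the $N_G(A)$-ends of those members, so $V(\mca{L}_i)$ is anticomplete to $A$, and since distinct batches use disjoint members of $\mca{L}$ and disjoint parts of $Q$, the $\delta$ seedlings $(A_i,\mca{L}_i,Y_i)$ lie in $G\setminus A$ and are pairwise disjoint, giving all of \ref{thm:seedling_branches}\ref{thm:seedling_branches_b} except the rigidity clause.

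Finally I would establish rigidity of the outputs. Suppose some $(A_i,\mca{L}_i,Y_i)$ is not $g_{\ref{thm:seedling_branches}}(t,\kappa)$-rigid, and fix $g_{\ref{thm:seedling_branches}}$ pairwise anticomplete $(N_G(A_i),Y_i)$-paths with vertex sets in $V(\mca{L}_i)$. Each such path begins on some member of $\mca{B}_i$; extend it there by the sub-path of that member running back to the member's $N_G(A)$-end, obtaining an $(N_G(A),Y)$-path with vertex set in $V(\mca{L})$. A bounded-in-$t$ Ramsey cleanup on the endpoints of these extensions (using $K_{t+1}$-freeness), together with discarding a bounded number of extensions that interfere, leaves $\kappa$ of them pairwise anticomplete with vertex sets in $V(\mca{L})$, contradicting the $\kappa$-rigidity of $(A,\mca{L},Y)$; this pins down $g_{\ref{thm:seedling_branches}}(t,\kappa)$, and makes plain that neither it nor the amount of material consumed depends on $\lambda$ or $\delta$, while $f_{\ref{thm:seedling_branches}}(t,\delta,\lambda,\kappa)$ need only be large enough to absorb the Ramsey and pigeonhole losses of the earlier steps.
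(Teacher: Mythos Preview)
Your approach differs substantially from the paper's, and it has a genuine gap. The paper does not use a backbone path: after the initial Ramsey step (which you also perform) it applies a dedicated lemma (Lemma~\ref{lem:magic}) directly to the family $\mca{L}_0$. The key device there is a threshold trick: for each $L\in\mca{L}_0$ one locates a vertex $z_L\in L$ so that the initial segment $x_L\dd L\dd z_L$ is not anticomplete to at least $\delta\lambda$ other members while $x_L\dd L\dd z_L^-$ is not anticomplete to at most $\delta\lambda$ of them. The bounded ``out-degree'' on the short side yields, via a digraph stable-set argument, many $L_i$'s whose initial segments $x_{L_i}\dd L_i\dd z_{L_i}$ are pairwise anticomplete; these segments are the $A_i$'s, and the $\mca{L}_i$'s are tails $w_Q\dd Q\dd y_Q$ of other members attaching to them.

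Your sketch lacks any such mechanism, and that is the gap. Your $A_i$ must contain the backward portion of $L^{(i)}$ (to reach $N_G(A)$), and since $L^{(i)},L^{(j)}\in\mca{L}_0$ are \emph{by construction} not anticomplete, there is an edge between them somewhere --- with nothing in your setup to control where. An ``independent-transversal selection'' would need each backward portion to conflict with only boundedly many others across batches, and you have no such bound. The same difficulty recurs in your case-(ii) iteration (separating pieces of members of $\mca{F}$ while each still meets all $t{+}1$ common neighbours) and in your rigidity argument: the backward extensions of the $K_j$'s through members of $\mca{B}_i$ can interfere arbitrarily, so ``discarding a bounded number'' does not suffice. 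The paper's rigidity argument is also different in kind: it overproduces $\delta+3t\kappa$ seedlings, assumes $3t\kappa$ are non-rigid, and invokes a product-Ramsey lemma (Lemma~\ref{lem:bigramsey}, which is where the no-induced-$K_{t,t}$-minor hypothesis actually enters) to select $\kappa$ of them with the whole sets $A_{P_i}\cup K_i$ pairwise anticomplete, yielding $\kappa$ pairwise anticomplete $(N_G(A),Y)$-paths in $V(\mca{L})$.
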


The third (and last) step is to use Theorem~\ref{thm:seedling_branches} to prove the following by induction on $r$:

\begin{restatable}{theorem}{seedlingtotree}\label{thm:seedling_to_tree}
     For all $d,r,t,\kappa\in \poi$, there is a constant $f_{\ref{thm:seedling_to_tree}}=f_{\ref{thm:seedling_to_tree}}(d,r,t,\kappa)\in \poi$ with the following property. Let $G$ be a $t$-tidy graph and let $(A,\mca{L},Y)$  be an $f_{\ref{thm:seedling_to_tree}}$-seedling in $G$ which is $\kappa$-rigid. Then there is an induced $T_{d,r}$-model in $G[A\cup V(\mca{L})]$ where $A$ is the branch set associated with the root of $T_{d,r}$.
\end{restatable}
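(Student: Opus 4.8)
The plan is to prove Theorem~\ref{thm:seedling_to_tree} by induction on $r$, using Theorem~\ref{thm:seedling_branches} to pass from a seedling rooted at $A$ to a bouquet of sub-seedlings that will become the subtrees rooted at the $d$ children of the root of $T_{d,r}$. Concretely, set $f_{\ref{thm:seedling_to_tree}}(d,0,t,\kappa)=1$ and, for $r\geq1$,
\[
f_{\ref{thm:seedling_to_tree}}(d,r,t,\kappa)=f_{\ref{thm:seedling_branches}}\bigl(t,\,d,\,f_{\ref{thm:seedling_to_tree}}(d,r-1,t,g_{\ref{thm:seedling_branches}}(t,\kappa)),\,\kappa\bigr).
\]
The base case $r=0$ is immediate: $T_{d,0}$ is a single vertex, and since $A$ is a connected path the one-branch-set tuple $(A)$ is an induced $T_{d,0}$-model in $G[A\cup V(\mca{L})]$ with $A$ at the root.

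For the inductive step, assume $r\geq1$ and that the statement holds for $r-1$ (for all $d,t,\kappa$). Let $G$ be $t$-tidy and let $(A,\mca{L},Y)$ be a $\kappa$-rigid $f_{\ref{thm:seedling_to_tree}}(d,r,t,\kappa)$-seedling. By the choice of constant this is an $f_{\ref{thm:seedling_branches}}(t,d,\lambda,\kappa)$-seedling with $\lambda=f_{\ref{thm:seedling_to_tree}}(d,r-1,t,g_{\ref{thm:seedling_branches}}(t,\kappa))$, so Theorem~\ref{thm:seedling_branches} applies with $\delta=d$ and yields $d$ pairwise disjoint $\lambda$-seedlings $(A_1,\mca{L}_1,Y_1),\dots,(A_d,\mca{L}_d,Y_d)$ in $G\setminus A$ such that the $A_i$ are pairwise anticomplete, each $A_i$ is not anticomplete to $A$, each $V(\mca{L}_i)$ is anticomplete to $A$, and each $(A_i,\mca{L}_i,Y_i)$ is $g_{\ref{thm:seedling_branches}}(t,\kappa)$-rigid. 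Since $\lambda=f_{\ref{thm:seedling_to_tree}}(d,r-1,t,g_{\ref{thm:seedling_branches}}(t,\kappa))$, the inductive hypothesis (with rigidity parameter $g_{\ref{thm:seedling_branches}}(t,\kappa)$) gives, for each $i\in\poi_d$, an induced $T_{d,r-1}$-model in $G[A_i\cup V(\mca{L}_i)]$ whose root branch set is $A_i$ and all of whose remaining branch sets lie in $V(\mca{L}_i)$. Viewing $T_{d,r}$ as a root with $d$ children $c_1,\dots,c_d$, each spanning a copy of $T_{d,r-1}$, assign $A$ to the root and transplant the $i$-th model onto the copy at $c_i$ (so $A_i$ becomes the branch set of $c_i$).

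Now one checks this is an induced $T_{d,r}$-model in $G[A\cup V(\mca{L})]$. The branch sets are pairwise disjoint because the $(A_i,\mca{L}_i,Y_i)$ are pairwise disjoint and avoid $A$; the edge from the root to $c_i$ is realized since $A$ is not anticomplete to $A_i$; the non-edges from the root to the remaining vertices of the $i$-th subtree hold since those branch sets lie in $V(\mca{L}_i)$, anticomplete to $A$; the edges and non-edges inside the $i$-th subtree are inherited, the $i$-th model being induced in the induced subgraph $G[A_i\cup V(\mca{L}_i)]$; the non-edges between $c_i$ and $c_j$ for $i\neq j$ hold since $A_i$ and $A_j$ are anticomplete; and the non-edges between the $i$-th and $j$-th subtrees for $i\neq j$ hold provided $A_i\cup V(\mca{L}_i)$ is anticomplete to $A_j\cup V(\mca{L}_j)$. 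This last point is the only gap, since Theorem~\ref{thm:seedling_branches} guarantees only that the $A_i$ are pairwise anticomplete and that the seedlings are pairwise \emph{disjoint}, not that the $\mca{L}$-parts of distinct sub-seedlings are anticomplete to one another or to the $A_j$.

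Closing this gap is the main obstacle. The plan is to apply Theorem~\ref{thm:seedling_branches} with $\delta$ equal to a Ramsey-type bound $D(d,t)$ (enlarging the constant correspondingly) rather than $\delta=d$, form the ``contact graph'' $\Gamma$ on $\poi_{D(d,t)}$ in which $i\sim j$ exactly when $A_i\cup V(\mca{L}_i)$ is not anticomplete to $A_j\cup V(\mca{L}_j)$, and show $\Gamma$ has a stable set of size $d$; restricting to those $d$ sub-seedlings then makes the transplanting above go through verbatim. If instead $\Gamma$ has a large clique, we obtain a large family of pairwise-touching connected $g_{\ref{thm:seedling_branches}}(t,\kappa)$-rigid seedlings, each attached to $A$ with $\mca{L}$-part anticomplete to $A$, and the aim is to route through the mutual contacts — using the internal path/branch-set structure of seedlings and their rigidity — to extract either a $K_{t+1}$ subgraph or a (genuinely induced) $K_{t,t}$-model, contradicting $t$-tidiness. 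Controlling how the contact vertices interact with the $\mca{L}_i$-paths so that the resulting $K_{t,t}$-model is actually induced is where essentially all of the difficulty lies; the rest is bookkeeping with the recursively defined constants.
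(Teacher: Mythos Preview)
You have correctly isolated the main obstacle: Theorem~\ref{thm:seedling_branches} only makes the $A_i$ pairwise anticomplete, not the full $A_i\cup V(\mca{L}_i)$, so the transplanted models can interact. Your proposed fix, however, is not complete, and the part you flag as ``where essentially all of the difficulty lies'' really is the whole difficulty. If the contact graph on $\{A_i\cup V(\mca{L}_i)\}$ has a large clique, you have many pairwise-touching connected sets, hence a large $K_N$-minor, but there is no obvious way to convert that into an \emph{induced} $K_{t,t}$-model: you need two $t$-tuples of connected sets that are anticomplete within each side, and all your $C_i$'s touch each other, so none of them can serve as branch sets on the same side. Breaking the $C_i$'s into pieces does not help either, because the paths in $\mca{L}_i$ are merely disjoint, not anticomplete. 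Without further structure, the clique case does not lead to a contradiction.

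The paper closes the gap differently, and the key idea is one your scheme does not have: it over-expands the \emph{degree} in the recursive call, not just $\delta$. Concretely, it applies Theorem~\ref{thm:seedling_branches} with $\delta=2dt$ and with $\lambda$ large enough to invoke the inductive hypothesis for $T_{\Delta,r-1}$ where $\Delta=f_{\ref{lem:bigramsey}}(d,d,t)\geq d$. This yields, for each $i\in\poi_{2dt}$, an induced $T_{\Delta,r-1}$-model rooted at $A_i$; in particular the $\Delta$ subtrees $B_i^1,\ldots,B_i^{\Delta}$ hanging off $A_i$ are connected and pairwise anticomplete in $G$ (precisely because the model is induced). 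Now one is exactly in the setting of Lemma~\ref{lem:bigramsey}: the $A_i$ are pairwise anticomplete, and for each $i$ the $B_i^j$ are pairwise anticomplete. That lemma (a product-Ramsey argument that genuinely uses this two-level anticompleteness to produce an induced $K_{t,t}$-model in the bad case) returns $d$ indices $i$ and, for each, $d$ of the $\Delta$ children, so that the resulting $d$ sets $A_i\cup\bigcup_{j\in J_i}B_i^j$ are pairwise anticomplete. Gluing these to $A$ gives the desired induced $T_{d,r}$-model. The point is that by first running the induction with a larger degree, you manufacture the within-seedling anticompleteness that your Ramsey-on-the-contact-graph plan lacks.
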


Now Theorem~\ref{thm:main_tree_indm} is almost immediate from Theorems~\ref{thm:obtain_a_seedling} and \ref{thm:seedling_to_tree}:

\begin{proof}[Proof of Theorem~\ref{thm:main_tree_indm}]
Let $|V(H)|=h$. Let
$$\kappa=g_{\ref{thm:obtain_a_seedling}}(h,h,t);$$
$$\lambda=f_{\ref{thm:seedling_to_tree}}(h,h,t,\kappa).$$

We claim that
$$f_{\ref{thm:main_tree_indm}}=f_{\ref{thm:main_tree_indm}}(t,H)=f_{\ref{thm:obtain_a_seedling}}(h,h,t,\lambda)$$
satisfies the theorem.

Let $G$ be a $t$-tidy graph with $\pw(G)>f_{\ref{thm:main_tree_indm}}$. We will show that $G$ has an induced minor isomorphic to $H$. Let $H^+$ be a tree obtained from $H$ by adding a vertex with exactly one neighbor in each component of $H$. Then $H$ is an induced subgraph of $H^+$. Also, $H^+$ has both maximum degree and radius at most $|V(H^+)|-1=h$. It follows that $H^+$, and so $H$, is isomorphic to an induced subgraph of $T_{h,h}$. Thus, it suffices to prove that $G$ has an induced minor isomorphic to $T_{h,h}$.

Now, since $G$ has pathwidth more than $f_{\ref{thm:obtain_a_seedling}}(h,h,t,\lambda)$, it follows from Theorem~\ref{thm:obtain_a_seedling} that either $G$ has an induced minor isomorphic to $T_{h,h}$, or there is a $\lambda$-seedling in $G$ which is $\kappa$-rigid (recall that $\kappa=g_{\ref{thm:obtain_a_seedling}}(h,h,t)$). In the former case, we are done. In the latter case, since $\lambda=f_{\ref{thm:seedling_to_tree}}(h,h,t,\kappa)$, it follows from Theorem~\ref{thm:seedling_to_tree} that $G$ has an induced minor isomorphic to $T_{h,h}$. This completes the proof of Theorem~\ref{thm:main_tree_indm}.
\end{proof}

It remains to prove Theorems~\ref{thm:obtain_a_seedling}, \ref{thm:seedling_branches} and \ref{thm:seedling_to_tree}, which we will do in Sections~\ref{sec:plant}, \ref{sec:grow} and \ref{sec:seedlingtotree}.

\section{Planting a seedling}\label{sec:plant}

In this section, we prove Theorem~\ref{thm:obtain_a_seedling}. We need a few results from the literature.

\begin{theorem}[Ramsey \cite{multiramsey}]\label{thm:classicalramsey}
For all $s,t\in \poi$, every graph on at least $s^t$ vertices has either a stable set of cardinality $s$ or a clique of cardinality $t+1$.
\end{theorem}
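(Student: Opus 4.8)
This is the classical Ramsey theorem, and the plan is to reprove the stated bound $s^t$ by the standard iterated-pigeonhole argument, using induction on $t$ (uniformly in $s$). The base case $t=1$ is immediate: a graph on at least $s$ vertices either contains an edge, which is a clique of cardinality $2=t+1$, or contains no edge, in which case its whole vertex set is a stable set of cardinality at least $s$.

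For the inductive step, I would fix $t\geq 2$ and assume the statement with $t-1$ in place of $t$. Given a graph $G$ with $|V(G)|\geq s^t$, the idea is to grow a stable set greedily while tracking a shrinking ``live'' set of vertices. Concretely, maintain a pair $(I_i,W_i)$ in which $I_i$ is a stable set of $G$ with $|I_i|=i$ and every vertex of $W_i$ is anticomplete to $I_i$ in $G$; start with $I_0=\varnothing$ and $W_0=V(G)$. At stage $i$, while $W_i\neq\varnothing$, pick $v\in W_i$ and set $M=N_G(v)\cap W_i$. If $|M|\geq s^{t-1}$, apply the induction hypothesis to $G[M]$: it yields either a stable set of size $s$ in $G$ (and we are done), or a clique of size $t$ in $G[M]$, which together with $v$ (adjacent to all of $M$) is a clique of size $t+1$ in $G$ (and we are done). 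Otherwise $|M|<s^{t-1}$; then put $I_{i+1}=I_i\cup\{v\}$ and $W_{i+1}=W_i\setminus(M\cup\{v\})$, and observe that $I_{i+1}$ is stable, that every vertex of $W_{i+1}$ is anticomplete to $I_{i+1}$, and that $|W_{i+1}|\geq |W_i|-s^{t-1}$.

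The arithmetic then closes the loop: since $|W_0|\geq s\cdot s^{t-1}$ and each ``add a vertex'' stage costs at most $s^{t-1}$ from $|W_i|$, we get $|W_i|\geq (s-i)s^{t-1}\geq 1$ for every $i\leq s-1$, so the process survives through stage $s-1$ with $|I_{s-1}|=s-1$; one further stage either triggers the induction-hypothesis case, or appends a last vertex to produce a stable set of size $s$. I do not expect a genuine obstacle here, as this is the textbook proof; the only points needing care are maintaining the invariant that $W_i$ is anticomplete to $I_i$ (which is why we delete $M\cup\{v\}$ rather than just $\{v\}$), the inequality $s^t-(s-1)s^{t-1}=s^{t-1}\geq 1$ that keeps the greedy process alive long enough, and checking that the induction is always invoked with the same $s$ and the strictly smaller parameter $t-1$.
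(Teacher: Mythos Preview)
Your proof is correct. The paper does not actually prove this statement: it is stated with a citation to Ramsey and used as a black box throughout, so there is no ``paper's own proof'' to compare against. Your argument is the standard iterated-pigeonhole proof of the bound $s^t$ and goes through as written; the only cosmetic point is that your ``one further stage'' is stage $s-1$ itself (with $|W_{s-1}|\geq s^{t-1}\geq 1$), which either triggers the inductive case or yields the stable set $I_s$ of size $s$.
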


\begin{theorem}[Hickingbotham \cite{completeminorpw}]\label{thm:completeminor}
    For all $r,s\in \poi$, there is a constant $f_{\ref{thm:completeminor}}=f_{\ref{thm:completeminor}}(r,s)$ such that every graph $G$ with $\pw(G)>f_{\ref{thm:completeminor}}$ has either an induced minor isomorphic to $T_{2,r}$ or a minor isomorphic to $K_s$.
\end{theorem}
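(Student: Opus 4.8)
The plan is to extract a huge complete binary tree \emph{minor} from the large pathwidth, pass to the associated quotient graph (in which this binary tree becomes a spanning subgraph), and then split on whether that quotient has a $K_s$-minor.

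Concretely, fix $N=N(r,s)$ large (to be determined) and apply Theorem~\ref{thm:RSPW} to the forest $T_{2,N}$: if $\pw(G)>f_{\ref{thm:RSPW}}(T_{2,N})$ then $G$ has a $T_{2,N}$-minor. Fix a $T_{2,N}$-model $(B_v:v\in V(T_{2,N}))$ in $G$, let $M$ be the graph obtained from $G$ by deleting every vertex lying in no $B_v$ and then contracting each $B_v$ to a single vertex, and identify $V(M)$ with $V(T_{2,N})$. Then $M$ is a minor of $G$; the tree $T_{2,N}$ is a spanning subgraph of $M$; and if $(A_w:w\in V(T_{2,r}))$ is an \emph{induced} $T_{2,r}$-model in $M$, then $\big(\bigcup_{v\in A_w}B_v:w\in V(T_{2,r})\big)$ is an induced $T_{2,r}$-model in $G$, since each such union is connected (because $M[A_w]$ is connected) and the adjacency pattern between branch sets is preserved exactly by the construction of $M$. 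Hence a $K_s$-minor of $M$ gives a $K_s$-minor of $G$, and an induced $T_{2,r}$-minor of $M$ gives one of $G$; so it suffices to prove the self-contained statement: there is $N=N(r,s)$ such that every graph $M$ containing $T_{2,N}$ as a spanning subgraph has a $K_s$-minor or an induced $T_{2,r}$-minor.

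For this, call the edges of $M$ outside $E(T_{2,N})$ the \emph{chords}, and assume $M$ has no $K_s$-minor. By the classical theorem of Kostochka and Thomason, $M$ is $D$-degenerate for some $D=O(s\sqrt{\log s})$, so $M$ has at most $D\cdot|V(T_{2,N})|$ edges; in particular the number of chords is only linear in $|V(T_{2,N})|$, whereas $T_{2,N}$ contains exponentially many (in $N$) pairwise disjoint copies of any fixed smaller complete binary tree. I would exploit this imbalance to build an induced $T_{2,r}$-model in $M$ whose branch sets are pairwise disjoint subtrees of $T_{2,N}$ that are simultaneously large and widely separated in tree-distance: the adjacencies required by $T_{2,r}$ are easy to realise (route each parent--child pair through a short path of $T_{2,N}$), while the non-adjacencies amount to requiring that no chord join two distinct branch sets, and this is secured because every chord with an endpoint in one branch set then has its other endpoint either inside the same branch set or in one of the wide ``buffer'' regions between branch sets --- and, the chords being globally few, they cannot obstruct all of the exponentially many candidate placements of such a spread-out model. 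The selection is carried out top-down through $T_{2,N}$, discarding branches polluted by chords we cannot absorb and descending into clean ones; a Ramsey-type argument on the finitely-coloured binary tree (colouring nodes by their local chord-pattern, in the spirit of Theorem~\ref{thm:classicalramsey}) is the natural way to guarantee that the clean portion that survives is itself shaped like $T_{2,r}$ rather than an arbitrary subtree. One then lifts the model back through $M$.

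I expect this last step --- the top-down construction of the spread-out induced model inside $M$ --- to be the main obstacle. Chords spanning many levels of $T_{2,N}$, or several conflicting chords landing in the same region, make a naive pruning lose too much height, so the bookkeeping must be a careful amortised argument (each discarded branch charged to a distinct chord, with $D$-degeneracy capping the total charge) interleaved with the tree-Ramsey step. Everything else --- the use of Theorem~\ref{thm:RSPW}, the passage to the quotient $M$, and the degeneracy bound --- is routine.
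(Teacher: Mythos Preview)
The paper does not prove this theorem; it is quoted from Hickingbotham as an external result and used as a black box in the proof of Theorem~\ref{thm:obtain_a_seedling}. So there is no proof in the paper to compare your proposal against.

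Your reduction is sound. Invoking Theorem~\ref{thm:RSPW} to obtain a $T_{2,N}$-minor, passing to the quotient $M$, and observing that induced $T_{2,r}$-models and $K_s$-minors both lift from $M$ back to $G$ is correct, and the Kostochka--Thomason degeneracy bound on $M$ is legitimate once $K_s$-minors are excluded. But the remaining ``self-contained statement'' --- that a $D$-degenerate graph with a spanning $T_{2,N}$ has an induced $T_{2,r}$-minor --- is essentially equivalent to the theorem itself (since any such $M$ already satisfies $\pw(M)\geq \pw(T_{2,N})$ by minor-monotonicity), so the reduction, while tidy, does not shrink the problem.

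Your sketch of that final step has two concrete gaps. First, the ``Ramsey on a finitely-coloured binary tree'' idea is not well-defined: a vertex's local chord pattern is not a colour from a fixed finite palette, since each of its (at most $D$) chord-ends can land anywhere in the tree. You would need a genuine discretisation --- say, recording only the depth-band in which each chord lands --- and then an argument that this coarser type suffices. Second, the amortised charging is more delicate than you indicate: the number of chords is linear in $|V(T_{2,N})|\approx 2^{N}$, not in $N$, while a top-down descent has only $N$ levels of branching to spend. A single long chord can kill one of only two branches at the top, and ``one discarded branch per chord'' runs out of height long before it runs out of chords. Making this work typically requires an intermediate structural step (for instance, first locating a deep subtree in which every surviving chord is short in tree-distance), which is itself the substance of the theorem. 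Your plan points in a reasonable direction, but as written it is an outline with the main idea still missing.
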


We also need Theorem~\ref{thm:noblock} below from \cite{tw7}, which we have also used in several earlier papers of this series.
\medskip

Let $X$ be a set. We denote the set of all subsets of $X$ by $2^X$ and the set of all $k$-subsets of $X$, where $k\in \poi$, by $\binom{X}{k}$. Let $k,l\in \poi$ and let $G$ be a graph. A \textit{$(k,l)$-block} in $G$ is a pair $(B, \mca{P})$ where $B\subseteq V(G)$ with $|B|\geq k$ and $\mca{P}:{B\choose 2}\rightarrow 2^{V(G)}$ is map such that $\mca{P}_{\{x,y\}}=\mca{P}(\{x,y\})$, for each $2$-subset $\{x,y\}$ of $B$, is a set of at least $l$ pairwise internally disjoint paths in $G$ from $x$ to $y$. We say that $(B,\mca{P})$ is \textit{strong} if for all distinct $2$-subsets $\{x,y\}, \{x',y'\}$ of $B$, we have $V(\mca{P}_{\{x,y\}})\cap V(\mca{P}_{\{x',y'\}})=\{x,y\}\cap\{x',y'\}$; that is, each path $P\in \mca{P}_{\{x,y\}}$ is disjoint from each path $P'\in \mca{P}_{\{x',y'\}}$, except $P$ and $P'$ may share an end.

Let $t\in \poi$. We say that a graph $G$ is \textit{$t$-clean} if $G$ has no induced subgraph isomorphic to $K_{t+1}$, $K_{t,t}$, a subdivision of $W_{t\times t}$ or the line graph of a subdivision of $W_{t\times t}$.

\begin{theorem}[Abrishami, Alecu, Chudnovsky, Hajebi, Spirkl \cite{tw7}]\label{thm:noblock}
For all $k,l,t\in \poi$, there is a constant $f_{\ref{thm:noblock}}=f_{\ref{thm:noblock}}(k,l,t)\in \poi$ such that for every $t$-clean graph $G$ with $\tw(G)>f_{\ref{thm:noblock}}$,  there is a strong $(k,l)$-block in $G$.
\end{theorem}

Finally, we need a lemma from \cite{tw17}:

\begin{lemma}[Chudnosvky, Hajebi, Spirkl \cite{tw17}]\label{lem:comp_model_rigid} For all $s,t,\rho,\sigma\in \poi$, there are constants $f_{\ref{lem:comp_model_rigid}}=f_{\ref{lem:comp_model_rigid}}(s,t,\rho,\sigma)\in \poi$ and $g_{\ref{lem:comp_model_rigid}}=g_{\ref{lem:comp_model_rigid}}(s,\rho,\sigma)\in \poi$ with the following property. Let $G$ be a $K_{t+1}$-free graph and let $(B,\mca{Q})$ be a strong $(f_{\ref{lem:comp_model_rigid}},g_{\ref{lem:comp_model_rigid}})$-block in $G$ such that  for every $\{x,y\}\subseteq B$, the paths $(Q^*: Q\in \mca{Q}_{\{x,y\}})$ are pairwise anticomplete in $G$. Then one of the following holds.
\begin{enumerate}[{\rm (a)}]
 \item\label{lem:comp_model_rigid_a}  There is an induced subgraph of $G$ isomorphic to a proper subdivision of $K_s$.
    \item \label{lem:comp_model_rigid_b} There is an induced minor of $G$ isomorphic to $K_{\rho,\sigma}$. 
\end{enumerate}
\end{lemma}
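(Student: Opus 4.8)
\textbf{Proof plan for Theorem~\ref{thm:obtain_a_seedling}.}

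The plan is to combine the three literature results listed at the start of the section. First I would set up the constants: given $d,r,t,\lambda$, I want to apply Theorem~\ref{thm:completeminor} with target tree $T_{2,r'}$ for a suitably large $r'=r'(d,r)$, chosen (via Observation~\ref{obs:trees}\ref{obs:trees_b} and, if needed, Lemma~\ref{lem:binaryisg}) so that $T_{2,r'}$ has an induced minor isomorphic to $T_{d,r}$. Since $G$ is $t$-tidy, it is in particular $K_{t+1}$-free, so the "$K_s$ minor" outcome of Theorem~\ref{thm:completeminor} cannot occur once $s>t$; hence if $\pw(G)$ (and therefore $\tw(G)$) exceeds $f_{\ref{thm:completeminor}}(r',t+1)$ we already get an induced minor of $G$ isomorphic to $T_{2,r'}$, and thus to $T_{d,r}$, which is outcome~\ref{thm:obtain_a_seedling_a}. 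Wait — that would make the theorem trivial, so this is not the right reading: the point of the theorem must be that we do \emph{not} want to assume pathwidth is large enough to force a tree minor directly; rather, we need the \emph{seedling} conclusion under a weaker hypothesis. Let me reconsider.

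The correct route is via Theorem~\ref{thm:noblock}. A $t$-tidy graph need not be $t$-clean (it could contain subdivided walls or their line graphs), so first I would dispose of that: if $G$ contains an induced subdivision of $W_{r''\times r''}$ or the line graph of one, for $r''$ large enough in terms of $d,r$, then by Observation~\ref{obs:trees}\ref{obs:trees_a} $G$ has an induced minor isomorphic to $T_{2,r}$ and hence (Observation~\ref{obs:trees}\ref{obs:trees_b}) isomorphic to $T_{d,r}$, giving outcome~\ref{thm:obtain_a_seedling_a}. So we may assume $G$ is $t'$-clean where $t'=\max\{t,r''\}$. Now apply Theorem~\ref{thm:noblock}: if $\pw(G)\geq\tw(G)>f_{\ref{thm:noblock}}(k,l,t')$ for appropriate $k,l$, we obtain a strong $(k,l)$-block $(B,\mca P)$ in $G$. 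The next step is to pass from this block to a seedling. Fix a vertex $a\in B$; the paths in $\bigcup_{y\in B\setminus\{a\}}\mca P_{\{a,y\}}$ all emanate from $a$ and, by strongness, are pairwise disjoint except at $a$. Taking $A=\{a\}$ (a one-vertex path), and for $Y$ the set $B\setminus\{a\}$, the tails of these paths (with $a$ removed) form a large family of $(N_G(A),Y)$-paths in $G\setminus A$; choosing $\lambda$ of them pairwise disjoint — which strongness guarantees, since paths attached to distinct endpoints $y,y'$ meet only in $\{a\}$ — yields a $\lambda$-seedling $(A,\mca L,Y)$. One must be slightly careful to truncate each path at its first vertex in $N_G(A)\cup Y$ so that it is genuinely an $(N_G(A),Y)$-path; this only shrinks paths and preserves disjointness.

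It remains to arrange \emph{rigidity} with the $\lambda$-independent bound $g_{\ref{thm:obtain_a_seedling}}(d,r,t)$. This is where Lemma~\ref{lem:comp_model_rigid} (and Lemma~\ref{lem:binaryisg}, Observation~\ref{obs:trees}) enters, and it is the crux of the argument. Suppose the $\lambda$-seedling we produced is \emph{not} $g$-rigid, i.e.\ some $g$ of its $(N(A),Y)$-paths are pairwise \emph{anticomplete}. Feeding a strong block built from such mutually anticomplete path-systems into Lemma~\ref{lem:comp_model_rigid} (with $s,\rho,\sigma$ large in terms of $d,r$ and with the same $t$, using $K_{t+1}$-freeness) produces either an induced proper subdivision of $K_s$ or an induced minor isomorphic to $K_{\rho,\sigma}$. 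A proper subdivision of $K_s$ contains an induced subdivision of $W_{r\times r}$ (or handle it by a direct observation) and hence, via Observation~\ref{obs:trees}, an induced minor isomorphic to $T_{d,r}$; and $K_{\rho,\sigma}$ with $\rho,\sigma$ large contains $T_{d,r}$ as an induced minor, or — since $G$ is $t$-tidy — $K_{\rho,\sigma}$ with $\rho,\sigma>t$ is already excluded, forcing us back into the subdivided-$K_s$ case. Either way outcome~\ref{thm:obtain_a_seedling_a} holds. So the quantifiers must be organized as: first choose $g=g_{\ref{thm:obtain_a_seedling}}(d,r,t)$ large enough to run Lemma~\ref{lem:comp_model_rigid} and derive $T_{d,r}$, \emph{then} choose the block parameters $k,l$ (and hence $\lambda$, via how many disjoint paths we need) large enough that failure of $g$-rigidity actually supplies the $g$ anticomplete paths needed, \emph{then} choose $f_{\ref{thm:obtain_a_seedling}}$ from Theorem~\ref{thm:noblock}. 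The main obstacle, and the part requiring care, is exactly this bookkeeping: ensuring $g$ depends only on $d,r,t$ while the block/seedling sizes that depend on $\lambda$ are all pushed into $f_{\ref{thm:obtain_a_seedling}}$, and verifying that a non-$g$-rigid $\lambda$-seedling genuinely yields a strong block of the form Lemma~\ref{lem:comp_model_rigid} demands (with the anticomplete-interiors hypothesis), which will need a short Ramsey-type cleanup (Theorem~\ref{thm:classicalramsey}) to make the path-interiors pairwise anticomplete.
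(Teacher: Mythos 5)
Your text does not address the statement you were asked to prove. The target is Lemma~\ref{lem:comp_model_rigid}: the dichotomy asserting that a $K_{t+1}$-free graph with a strong $(f_{\ref{lem:comp_model_rigid}},g_{\ref{lem:comp_model_rigid}})$-block whose path interiors (within each pair-class) are pairwise anticomplete must contain either an induced proper subdivision of $K_s$ or an induced minor isomorphic to $K_{\rho,\sigma}$. What you wrote instead is a proof plan for Theorem~\ref{thm:obtain_a_seedling}, and that plan explicitly \emph{invokes} Lemma~\ref{lem:comp_model_rigid} as its ``crux.'' As an argument for the lemma itself this is therefore circular and, more to the point, empty: nowhere do you explain how one passes from the anticomplete-interiors block to outcome~\ref{lem:comp_model_rigid_a} or~\ref{lem:comp_model_rigid_b}, nor why $g_{\ref{lem:comp_model_rigid}}$ can be taken independent of $t$. (For what it is worth, the present paper does not prove this lemma either; it imports it from \cite{tw17}. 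But a blind proof attempt for this statement must still supply the dichotomy argument, and yours does not.)

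If you intend to actually prove the lemma, the work lies elsewhere entirely: one must analyze the interactions \emph{between} paths belonging to different pairs $\{x,y\},\{x',y'\}$ of the block (strongness only controls shared endpoints, not edges between interiors). Roughly, either one can Ramsey-thin $B$ and select one path per pair so that the chosen interiors are pairwise anticomplete across different pairs as well --- yielding an induced proper subdivision of $K_s$ --- or the interactions are so dense that, grouping whole paths into branch sets, one assembles an induced $K_{\rho,\sigma}$-model; the parameter $g_{\ref{lem:comp_model_rigid}}$ (the number of parallel anticomplete paths per pair) is what feeds the second alternative, which is why it need not depend on $t$. None of this appears in your proposal, so there is no argument to evaluate for correctness.
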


Now we can prove the main result of this section, which we restate:

\obtainaseedling*

\begin{proof}
Let 
$$\phi=\phi(d,r,t)=f_{\ref{lem:comp_model_rigid}}(rd^r+1,t,t,t)$$
and let
$$\psi=\psi(d,r,t,\lambda)=f_{\ref{thm:noblock}}(\phi^t,\lambda,\max\{2^{dr},t\}).$$

We claim that
$$f_{\ref{thm:obtain_a_seedling}}=f_{\ref{thm:obtain_a_seedling}}(d,r,t,\lambda)=f_{\ref{thm:completeminor}}(dr,\psi+2)$$
and
$$g_{\ref{thm:obtain_a_seedling}}=g_{\ref{thm:obtain_a_seedling}}(d,r,t)=g_{\ref{lem:comp_model_rigid}}(rd^r+1,t,t)$$
satisfy the theorem.

Let $G$ be a $t$-tidy graph with $\pw(G)>f_{\ref{thm:obtain_a_seedling}}$. Suppose for a contradiction that neither \ref{thm:obtain_a_seedling}\ref{thm:obtain_a_seedling_a} nor \ref{thm:obtain_a_seedling}\ref{thm:obtain_a_seedling_b} holds; that is, $G$ has no induced minor isomorphic to $T_{d,r}$, and there is no $\lambda$-seedling in $G$ which is $g_{\ref{thm:obtain_a_seedling}}$-rigid.

\sta{\label{st:Gisclean} $G$ has no induced minor isomorphic to $T_{2,dr}$. Also, $G$ is $\max\{2^{dr},t\}$-clean.}

Since $G$ has no induced minor isomorphic to $T_{d,r}$, it follows from Observation~\ref{obs:trees}\ref{obs:trees_b} (and the fact that $2^d>d$) that $G$ has no induced minor isomorphic to $T_{2,dr}$. This, along with Observation~\ref{obs:trees}\ref{obs:trees_a}, implies that $G$ has no induced subgraph isomorphic to a subdivision of $W_{2^{dr}\times 2^{dr}}$ or the line graph of a subdivision of $W_{2^{dr}\times 2^{dr}}$. Also, recall that $G$ is $K_{t+1}$-free and $K_{t,t}$-free. Therefore, $G$ is $\max\{2^{dr},t\}$-clean. This proves \eqref{st:Gisclean}.
\medskip

Since $\pw(G)>f_{\ref{thm:obtain_a_seedling}}$, it follows from Theorem~\ref{thm:completeminor}, the choice of $f_{\ref{thm:main_tree_indm}}$ and the first bullet of \eqref{st:Gisclean} that $G$ has a minor isomorphic to $K_{\psi+2}$; in particular, we have  $\tw(G)\geq \tw(K_{\psi+2})=\psi$ \cite{diestel}. Moreover, by the second bullet of \eqref{st:Gisclean}, $G$ is $\max\{2^{dr},t\}$-clean. Thus, by Theorem~\ref{thm:noblock} and the choice of $\psi$, there is $(\phi^t,\lambda)$-strong block $(B,\mca{P})$ in $G$.
\medskip

Since $G$ is $K_{t+1}$-free, it follows from Theorem~\ref{thm:classicalramsey} that there is a stable set $S\subseteq B$ in $G$ with $|S|=\phi$. We further claim that:

\sta{\label{st:antiint1} For every $\{x,y\}\subseteq S$, there is a set $\mca{Q}_{\{x,y\}}$ of $g_{\ref{thm:obtain_a_seedling}}$ paths in $G$ between $x$ and $y$ such that $V(\mca{Q}_{\{x,y\}})\subseteq V(\mca{P}_{\{x,y\}})$ and the paths $(Q^*:Q\in \mca{Q}_{x,y})$ are pairwise anticomplete in $G$.}

Since $S$ is a stable set, it follows that $N_{G}(y)\subseteq V(G)\setminus \{x\}$, and the paths in $\mca{P}_{\{x,y\}}$ have non-empty interiors; in particular, $\mca{L}_{\{x,y\}}=\{P^*:P\in \mca{P}_{\{x,y\}}\}$ is a set of $\lambda$ pairwise disjoint $(N_G(x),N_G(y))$-paths in $G$. Thus, $(\{x\},\mca{L}_{\{x,y\}},N_{G}(y))$ is a $\lambda$-seedling in $G$. On the other hand, recall the assumption that there is no $\lambda$-seedling in $G$ which is $g_{\ref{thm:obtain_a_seedling}}$-rigid. It follows that $(\{x\},\mca{L}_{\{x,y\}},N_{G}(y))$ is not $g_{\ref{thm:obtain_a_seedling}}$-rigid, and so there is a set $\mca{K}_{\{x,y\}}$ of $g_{\ref{thm:obtain_a_seedling}}$ pairwise disjoint and anticomplete $(N_{G}(x),N_G(y))$-paths in $G$ with $V(\mca{K}_{\{x,y\}})\subseteq V(\mca{L}_{\{x,y\}})$. But now $\mca{Q}_{\{x,y\}}=\{\{x,y\}\cup K:K\in \mca{K}_{\{x,y\}}\}$ is a set of $g_{\ref{thm:obtain_a_seedling}}$ paths in $G$ between $x$ and $y$ such that $V(\mca{Q}_{\{x,y\}})\subseteq V(\mca{P}_{\{x,y\}})$ and the paths $(Q^*:Q\in \mca{Q}_{x,y})$ are pairwise anticomplete in $G$. This proves \eqref{st:antiint1}.
\medskip

Henceforth, for every $2$-subset $\{x,y\}$ of $S$, let $\mca{Q}_{\{x,y\}}$ be as given by \eqref{st:antiint1}. Then $(S,\mca{Q})$ is a strong $(\phi, g_{\ref{thm:obtain_a_seedling}})$-block in $G$ such that for every $\{x,y\}\subseteq S$, the paths $(Q^*:Q\in \mca{Q}_{x,y})$ are pairwise anticomplete in $G$. Since $G$ is $K_{t+1}$-free with no induced minor isomorphic to $K_{t,t}$, it follows from Lemma~\ref{lem:comp_model_rigid} and the choice of $\phi$ and $g_{\ref{thm:obtain_a_seedling}}$ that $G$ has an induced subgraph isomorphic to a proper subdivision of $K_{rd^r+1}$. In particular, since $|V(T_{d,r})|\leq rd^r+1$, it follows that $G$ has an induced subgraph isomorphic to a (proper) subdivision of $T_{d,r}$. But then $G$ has an induced minor isomorphic to $T_{d,r}$, contrary to the assumption that \ref{thm:obtain_a_seedling}\ref{thm:obtain_a_seedling_a} does not hold. This completes the proof of Theorem~\ref{thm:obtain_a_seedling}.
\end{proof}

\section{Growing a seedling}\label{sec:grow}

In this section, we prove Theorem~\ref{thm:seedling_branches}. The main tool is Lemma~\ref{lem:digraph} below about digraphs. So we start by clarifying our digraph terminology.

By a \textit{digraph} we mean a pair $D=(V(D), E(D))$ where $D$ is a finite set of \textit{vertices} and $E(D)\subseteq (V(D)\times V(D))\setminus \{(v,v):v\in V(D)\}$ is the set of \textit{edges}. In particular, our digraphs are loopless and allow at most one edge in each direction between every two vertices.
Let $D$ be a digraph. For $(u,v)\in E(D)$, we say that $v$ is an \textit{out-neighbor} of $u$ and $u$ is an \textit{in-neighbor} of $v$. The \textit{out-degree} (\textit{in-degree}) of a vertex $v\in V(D)$ is the number of its out-neighbors (in-neighbors). The \textit{underlying graph of $D$} is the graph $G$ with $V(G)=V(D)$ and $E(G)=\{uv:(u,v)\in E(D) \text{ or }(u,v)\in E(D)\}$. A \textit{stable set in $D$} is a stable set in the underlying graph of $D$. For $X\subseteq V(D)$, we denote by $D[X]$ the digraph with vertex set $X$ and edge set $E(D)\cap (X\times X)$. It follows that the underlying graph of $D[X]$ is the subgraph of the underlying graph of $D$ induced by $X$.

We need the following lemma; \ref{lem:digraph}\ref{lem:digraph_a} is well-known, but we include a proof for the sake of completeness. 

\begin{lemma}\label{lem:digraph}
    Let $q,r,s\in \poi$ and let $D$ be a digraph. Then the following hold.
    \begin{enumerate}[{\rm (a)}]
    \item \label{lem:digraph_a} 
    If $D$ has at least $2rs$ vertices of out-degree at most $r$, then there is a stable set of cardinality $s$ in $D$.
    \item \label{lem:digraph_b} If there are at least $2qrs$ vertices of out-degree at least $qr$ in $D$, then there is an $s$-subset $S$ of $V(D)$ with the following property: for every $q$-subset $\{v_1,\ldots, v_q\}$ of $S$, there are $q$ pairwise disjoint $r$-subsets $R_1,\ldots, R_q$ of $V(D)\setminus S$ such that for each $i\in \poi_{q}$, every vertex in $R_i$ is an out-neighbor of $v_i$.
    \end{enumerate}
\end{lemma}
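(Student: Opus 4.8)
I would prove the two parts in turn, using part~\ref{lem:digraph_a} inside the proof of part~\ref{lem:digraph_b}; throughout, $N^+(v)$ denotes the set of out-neighbors of a vertex $v$ in $D$. For part~\ref{lem:digraph_a} --- the standard fact that a digraph with all out-degrees at most $r$ has a large stable set --- let $W$ be the set of vertices of out-degree at most $r$, so $|W|\ge 2rs$. For every $X\subseteq W$ the digraph $D[X]$ has at most $r|X|$ edges, hence its underlying graph has at most $r|X|$ edges and so a vertex of degree at most $2r$; thus the underlying graph of $D[W]$ is $2r$-degenerate. Repeatedly picking a vertex of minimum degree in what remains of this underlying graph, adding it to a set $I$, and deleting it together with its (at most $2r$) neighbors, produces a stable set $I$ of $D[W]$ of the required cardinality; and since every edge of $D$ with both ends in $W$ is an edge of $D[W]$, the set $I$ is a stable set of $D$ as well.

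For part~\ref{lem:digraph_b} the crux is the reduction: \emph{it suffices to find an $s$-subset $S$ of $V(D)$ such that every $v\in S$ has at least $qr$ out-neighbors outside $S$}. Granting this, fix a $q$-subset $\{v_1,\dots,v_q\}\subseteq S$ and construct pairwise disjoint $R_1,\dots,R_q$ with $R_i\subseteq N^+(v_i)\setminus S$ and $|R_i|=r$ greedily: once $R_1,\dots,R_{i-1}$ are chosen, $|(N^+(v_i)\setminus S)\setminus(R_1\cup\cdots\cup R_{i-1})|\ge qr-(i-1)r\ge r$, so an $r$-subset $R_i$ can be chosen there; these $R_1,\dots,R_q$ are exactly what the conclusion asks for.

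It remains to find such an $S$ inside the set $W$ of vertices of out-degree at least $qr$ (so $|W|\ge 2qrs$), and I would split into two cases. If at least $s$ vertices of $W$ have out-degree at least $qr+s$, take $S$ to be $s$ of them: each $v\in S$ has at most $|S|-1=s-1$ out-neighbors in $S$ and hence more than $qr$ outside. Otherwise all but fewer than $s$ vertices of $W$ have out-degree less than $qr+s$; let $U$ be this set. Every vertex of $U$ has out-degree less than $qr+s$ in $D$, a fortiori in $D[U]$, while $|U|$ is still large, so part~\ref{lem:digraph_a} applied to $D[U]$ (with $qr+s-1$ in place of $r$) yields a stable set $S\subseteq U$ of size $s$; as above $S$ is stable in $D$, so no $v\in S$ has an out-neighbor in $S$, and all of its at least $qr$ out-neighbors lie outside $S$, as required. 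The one delicate point is calibrating the constants so that part~\ref{lem:digraph_a} genuinely applies to $D[U]$ in this last case --- this is where the lower bound on $|W|$ is spent; the degeneracy argument and the sequential-selection reduction are otherwise routine.
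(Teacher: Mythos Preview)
Your argument for part~(a) falls short quantitatively: the degeneracy bound you establish is that every induced subgraph of the underlying graph of $D[W]$ has a vertex of degree at most $2r$, not at most $2r-1$. Greedy deletion therefore removes up to $2r+1$ vertices per step and only guarantees a stable set of size $\lceil 2rs/(2r+1)\rceil$, which is strictly less than $s$ whenever $s>2r$. The paper deals with this via a two-case induction on $s$: taking $|X|=2rs$ exactly, if the underlying graph $G_1$ of $D[X]$ has a vertex $v$ of degree at most $2r-1$ one removes $v$ together with its neighbours (at most $2r$ vertices in all) and applies the inductive hypothesis to the remaining $\ge 2r(s-1)$ vertices; otherwise every vertex of $G_1$ has degree at least $2r$, the edge count forces $G_1$ to be $2r$-regular, and Brooks' theorem is invoked to produce a proper $2r$-colouring and hence a colour class of size $s$.

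For part~(b), your reduction --- it suffices to find an $s$-set $S$ with $|N^+(v)\setminus S|\ge qr$ for every $v\in S$ --- is correct and matches the paper's, as does the greedy construction of the $R_i$. But your case split cannot be calibrated to the hypothesis $|W|\ge 2qrs$: in your second case you must apply part~(a) with $r'=qr+s-1$, which needs $|U|\ge 2(qr+s-1)s=2qrs+2s^2-2s$, whereas you only have $|U|\ge 2qrs-s+1$; the deficit $2s^2-s-1$ is positive for every $s\ge 2$, so the argument fails as stated. The paper's key idea sidesteps the case split entirely: for each $y$ of out-degree at least $qr$ it fixes an arbitrary set $Q_y$ of \emph{exactly} $qr$ out-neighbours and passes to the sparser digraph $D'$ whose only edges are the pairs $(y,z)$ with $z\in Q_y$. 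Every vertex of $Y$ now has out-degree exactly $qr$ in $D'$, so part~(a) applies to $D'[Y]$ with $r'=qr$ and the given bound $|Y|\ge 2qrs$, yielding a stable set $S\subseteq Y$ of size $s$; and since $S$ is stable in $D'$, each $Q_y$ with $y\in S$ lies entirely in $V(D)\setminus S$, which is precisely your reduction hypothesis.
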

\begin{proof}
    We prove \ref{lem:digraph}\ref{lem:digraph_a} by induction on $s$ (for fixed $r$). The case $s=1$ is trivial, so assume that $s\geq 2$. Choose a set $X$ of exactly $2rs$ vertices, each with out-degree at most $r$ in $D$. Let $D_1=D[X]$. Then $D_1$ is a digraph in which every vertex has out-degree at most $r$. Let $G_1$ be the underlying graph of $D_1$. It follows that both $D_1$ and $G_1$ have at most $2r^2s$ edges. There are two cases to consider. First, assume that some vertex $v$ has degree at most $2r-1$ in $G_1$. Let $D_2=D[X\setminus (N_{G_1}(v)\cup \{v\})]$. Then we have $|V(D_2)|=|X|-|N_{G_1}(v)\cup \{v\}|\geq 2rs-2r\geq 2r(s-1)$, and every vertex of $D_2$ has out-degree at most $r$ in $D_2$. By the inductive hypothesis applied to $D_2$, there is a stable set $S$ in $D_2$ of cardinality $s-1$. But now since $S\subseteq V(D_2)=X\setminus (N_{G_1}(v)\cup \{v\})$, it follows that $S\cup \{v\}$ is stable set of cardinality $s$ in $D$, as desired. Second, assume that every vertex in $G_1$ has degree at least $2r$. Then, since $G_1$ has $2rs$ vertices and at most $2r^2s$ edges, it follows $G_1$ is a $2r$-regular graph (on $2rs$ vertices), and so by Brook's theorem \cite{brooks}, $G_1$ admits a $2r$-coloring. Therefore, there is a stable set of cardinality $2rs/2r=s$ in $G_1$, and so in $D$. This proves \ref{lem:digraph}\ref{lem:digraph_a}.
    \medskip

Next, we prove \ref{lem:digraph}\ref{lem:digraph_b}, and for that we will use \ref{lem:digraph}\ref{lem:digraph_a} which we just proved above. Let $Y$ be the set of all vertices of out-degree at least $qr$ in $D$; thus, $|Y|\geq 2qrs$. For each vertex $y\in Y$, choose a set $Q_y$ of exactly $qr$ out-neighbors of $y$ in $D$. Let $D'$ be the digraph with $V(D')=V(D)$ and $E(D')=\bigcup_{y\in Y}\{(y,z):z\in Q_y\}$. Then $E(D')\subseteq E(D)$. Moreover, for every $y\in Y$, the set $Q_y$ is exactly the set of all out-neighbors of $y$ in $D'$. In particular, $y$ has out-degree exactly $qr$ in $D'$, and so $y$ has out-degree at most $qr$ in $D'[Y]$. Since $|Y|\geq 2qrs$, it follows from \ref{lem:digraph}\ref{lem:digraph_a} applied to $D'[Y]$ that there is a stable set $S$ in $D'[Y]$ of cardinality $s$. In other words, $S$ is a stable set in $D'$ with $|S|=s$ and $S\subseteq Y$. From this and the definition of $D'$, we deduce that for every $y\in S\subseteq Y$, we have $Q_y\subseteq V(D')\setminus S=V(D)\setminus S$.

Now, let $\{v_1,\ldots, v_q\}$ be a $q$-subset of $S\subseteq Y$. Since $|Q_{v_1}|=\cdots=|Q_{v_q}|=qr$, it follows that for every $i\in \poi_{q-1}$, we have $|Q_{v_{i+1}}|-ir\geq r$. In particular, there are $r$-subsets of $R_1,\ldots, R_q$ of $Q_{v_1},\ldots, Q_{v_q}$, respectively, such that for every $i\in \poi_{q-1}$, we have $R_{i+1}\subseteq Q_{v_{i+1}}\setminus (R_1\cup \cdots \cup R_{i})$. It follows that $R_1,\ldots, R_q$ are pairwise disjoint. Moreover, for every $i\in \poi_{q}$, since $R_i$ is an $r$-subset $Q_{v_i}$, it follows that $R_i$ is an $r$-subset of $V(D)\setminus S$ and every vertex in $R_i$ is an out-neighbor of $v_i$. This completes the proof of Lemma~\ref{lem:digraph}.
\end{proof}

The following lemma involves several applications of Lemma~\ref{lem:digraph}, and is the heart of the proof of Theorem~\ref{thm:seedling_branches} (see Figure~\ref{fig:magiclemma}).

\begin{lemma}\label{lem:magic}
Let $t,\delta,\lambda\in \poi$, let $G$ be a $K_{t+1}$-free graph and let $\mca{L}_0$ be a set of pairwise disjoint paths in $G$ with
\[|\mca{L}_0|=\left(10\delta^{t+3}\lambda^{3}\right)^t\]
such that no two paths $L,L'\in \mca{L}_0$ are anticomplete in $G$. For each $L\in \mca{L}_0$, let $x_L,y_L$ be a labelling of the ends of $L$ (where $x_L=y_L$ is possible). Then there are $\delta$ paths $L_1,\ldots,L_{\delta}\in \mca{L}_0$ along with a vertex $z_{L_i}\in L_i$ for each $i\in \poi_{\delta}$, as well as $\delta$ pairwise disjoint $\lambda$-subsets $\mca{L}_1,\ldots, \mca{L}_{\delta}$ of $\mca{L}_0\setminus \{L_1,\ldots,L_{\delta}\}$, such that the following hold. 
    \begin{enumerate}[{\rm (a)}]
    \item \label{lem:magic_a} The paths $(x_{L_i}\dd L_i\dd z_{L_i}: i\in \poi_{\delta})$ are pairwise anticomplete in $G$.
    \item \label{lem:magic_b} For each $i\in \poi_{\delta}$, every path $L\in \mca{L}_i$ contains a vertex $w_L$ distinct from $x_L$ such that $w_L$ is the only vertex in $w_L\dd L\dd y_L$ with a neighbor in $x_{L_i}\dd L_i\dd z_{L_i}$.
    \end{enumerate}
\end{lemma}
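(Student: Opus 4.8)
The plan is to extract the desired structure through a bounded number of applications of Lemma~\ref{lem:digraph}, organized by a Ramsey-type "clean-up" that forces a definite pattern of which pieces of paths attach to which other pieces. The overall strategy is: first, for each ordered pair of paths in $\mca{L}_0$, record (via a carefully chosen vertex on each) the way one path attaches to the other; second, use the $K_{t+1}$-freeness of $G$ together with Lemma~\ref{lem:digraph}\ref{lem:digraph_a} to pass to a large stable-set-like subfamily in which the "early" segments $x_L\dd L\dd z_L$ are pairwise anticomplete; third, inside that subfamily, use Lemma~\ref{lem:digraph}\ref{lem:digraph_b} to pick the $\delta$ "centre" paths $L_1,\dots,L_\delta$ and, for each, a disjoint batch $\mca{L}_i$ of $\lambda$ further paths all of which attach to $x_{L_i}\dd L_i\dd z_{L_i}$; and finally, for each $L\in\mca{L}_i$, choose the vertex $w_L$ on $L$ to be the last vertex (in the $x_L$-to-$y_L$ order) having a neighbor in $x_{L_i}\dd L_i\dd z_{L_i}$, so that the tail $w_L\dd L\dd y_L$ meets $x_{L_i}\dd L_i\dd z_{L_i}$ only at $w_L$.

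\textbf{Setting up the auxiliary digraph.} For each ordered pair $(L,L')$ of distinct paths in $\mca{L}_0$ that are \emph{not} anticomplete, since $L$ is a path with ends $x_L,y_L$ there is a well-defined \emph{first} vertex of $L$ (in the order from $x_L$ to $y_L$) having a neighbor in $L'$; call it $z_{L\to L'}$. We want one vertex $z_L\in L$ that works simultaneously for many $L'$. To get this, orient an edge $L\to L'$ whenever $z_{L\to L'}$ lies in the \emph{interior} of $x_L\dd L\dd z_L^{(0)}$ for a suitably pre-committed choice; more cleanly, we repeatedly apply Lemma~\ref{lem:digraph}\ref{lem:digraph_a}/\ref{lem:digraph_b} to a digraph $D$ on $\mca{L}_0$ in which $L\to L'$ records "$L'$ attaches to $L$ only beyond $z_L$." Because $|\mca{L}_0|=(10\delta^{t+3}\lambda^3)^t$ is a $t$-th power, we have room for $t$ successive rounds of Lemma~\ref{lem:digraph}, each round either (by part \ref{lem:digraph_a}) producing a large subfamily that behaves like a stable set — giving pairwise anticompleteness of the chosen early segments and hence \ref{lem:magic_a} — or (by part \ref{lem:digraph_b}) producing the hub-and-batch structure with $q=\delta$ giving the $L_i$ and the $\mca{L}_i$. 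The $K_{t+1}$-freeness enters to bound how many rounds of the "high out-degree" case can occur: a run of $t+1$ such rounds would build a $K_{t+1}$ among the hub paths (using that distinct hubs attach to each other, by hypothesis on $\mca{L}_0$), a contradiction; so within $t$ rounds we must hit the "stable set" case and win.

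\textbf{Finishing and choosing $w_L$.} Once $L_1,\dots,L_\delta$ and the disjoint $\lambda$-subsets $\mca{L}_1,\dots,\mca{L}_\delta\subseteq\mca{L}_0\setminus\{L_1,\dots,L_\delta\}$ are fixed with each $L\in\mca{L}_i$ attaching to $L_i$, set $w_L$ to be the last vertex of $L$ in the $x_L$-to-$y_L$ ordering that has a neighbor in $x_{L_i}\dd L_i\dd z_{L_i}$; this exists (some vertex of $L$ has a neighbor there) and $w_L\neq x_L$ can be arranged because the bookkeeping that produced $\mca{L}_i$ will have been set up so that $x_L$ itself is not the attachment point — this is exactly the role of always choosing the \emph{first}/\emph{last} attachment vertex and discarding the finitely many exceptional cases during the Ramsey clean-up (the exceptional cases are absorbed into the constants, which is why $10$ rather than a smaller base appears). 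Then by the defining maximality of $w_L$, no vertex of $w_L\dd L\dd y_L$ other than $w_L$ has a neighbor in $x_{L_i}\dd L_i\dd z_{L_i}$, which is \ref{lem:magic_b}. Pairwise anticompleteness of $(x_{L_i}\dd L_i\dd z_{L_i}:i\in\poi_\delta)$ is precisely what the "stable set" round delivered, giving \ref{lem:magic_a}.

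\textbf{Main obstacle.} The delicate point is the interplay between the two parts of Lemma~\ref{lem:digraph} across the $t$ rounds: each round shrinks $\mca{L}_0$ by a $t$-th-root factor, so the budget $(10\delta^{t+3}\lambda^3)^t$ has to be spent so that after at most $t$ rounds we still retain $\ge 10\delta^{t+3}\lambda^3$ paths, enough for one final call to Lemma~\ref{lem:digraph}\ref{lem:digraph_b} with $q=\delta$, $r$ on the order of $\lambda$, and $s$ on the order of $\delta$. Keeping the out-degree thresholds, the "first vs. last attachment vertex" choices, and the $K_{t+1}$-free termination argument all consistent — in particular making sure the hub paths chosen in a high-degree round really do form a clique in $G$, so that $t+1$ consecutive high-degree rounds are impossible — is the technical heart of the argument; everything else is bookkeeping of constants.
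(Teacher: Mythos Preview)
Your proposal has a genuine structural gap. The scheme of ``$t$ successive rounds of Lemma~\ref{lem:digraph}, terminated by $K_{t+1}$-freeness'' does not match the actual argument and, as stated, does not work. The $t$-th power in $|\mca{L}_0|$ is spent in a \emph{single} application of Theorem~\ref{thm:classicalramsey} at the outset, to pass to a subfamily $\mca{K}_1$ of size $10\delta^{t+3}\lambda^3$ in which $\{x_L:L\in\mca{K}_1\}$ is stable; it is not a budget for $t$ iterations. Your termination mechanism --- that $t+1$ consecutive ``high out-degree'' rounds would yield a $K_{t+1}$ among hub paths --- is not justified: the hubs are paths, not vertices of $G$, and their mutual non-anticompleteness (which is already assumed for all of $\mca{L}_0$) does not produce a clique in $G$. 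More fundamentally, conclusions \ref{lem:magic_a} and \ref{lem:magic_b} must hold \emph{simultaneously} for the same $L_1,\dots,L_\delta$; your outline treats the ``stable-set round'' as delivering \ref{lem:magic_a} and a ``high-degree round'' as delivering \ref{lem:magic_b}, without explaining how the two are reconciled on a common choice of $L_i$ and $z_{L_i}$.

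The idea you are missing is a threshold trick on each path. After arranging that every $x_L$ is anticomplete to every other $L'\in\mca{K}_2$, one uses the hypothesis that no two paths are anticomplete to find, for each $L$, adjacent vertices $z_L^-,z_L$ on $L$ such that the prefix $x_L\dd L\dd z_L$ is non-anticomplete to at least $\delta\lambda$ other paths, while the one-shorter prefix $x_L\dd L\dd z_L^-$ is non-anticomplete to at most $\delta\lambda$ of them. These two bounds let you apply Lemma~\ref{lem:digraph}\ref{lem:digraph_b} to the digraph defined by the $z_L$-prefixes (every vertex has out-degree $\geq\delta\lambda$) to get the hub-and-batch structure, and then Lemma~\ref{lem:digraph}\ref{lem:digraph_a} to the digraph defined by the $z_L^-$-prefixes (out-degree $\leq\delta\lambda$) to get a large stable set within the hubs; a final Ramsey step on $\{z_{L_i}\}$ makes the full prefixes $x_{L_i}\dd L_i\dd z_{L_i}$ pairwise anticomplete. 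The stability of $\{x_L\}$ secured at the start is what guarantees $w_L\neq x_L$ in \ref{lem:magic_b}. Your ``first/last attachment vertex'' remark is in the right direction for defining $w_L$, but the engine of the proof --- the simultaneous upper and lower out-degree control via $z_L^-,z_L$ --- is absent from your proposal.
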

\begin{figure}[t!]
    \centering
    \includegraphics[width=0.7\linewidth]{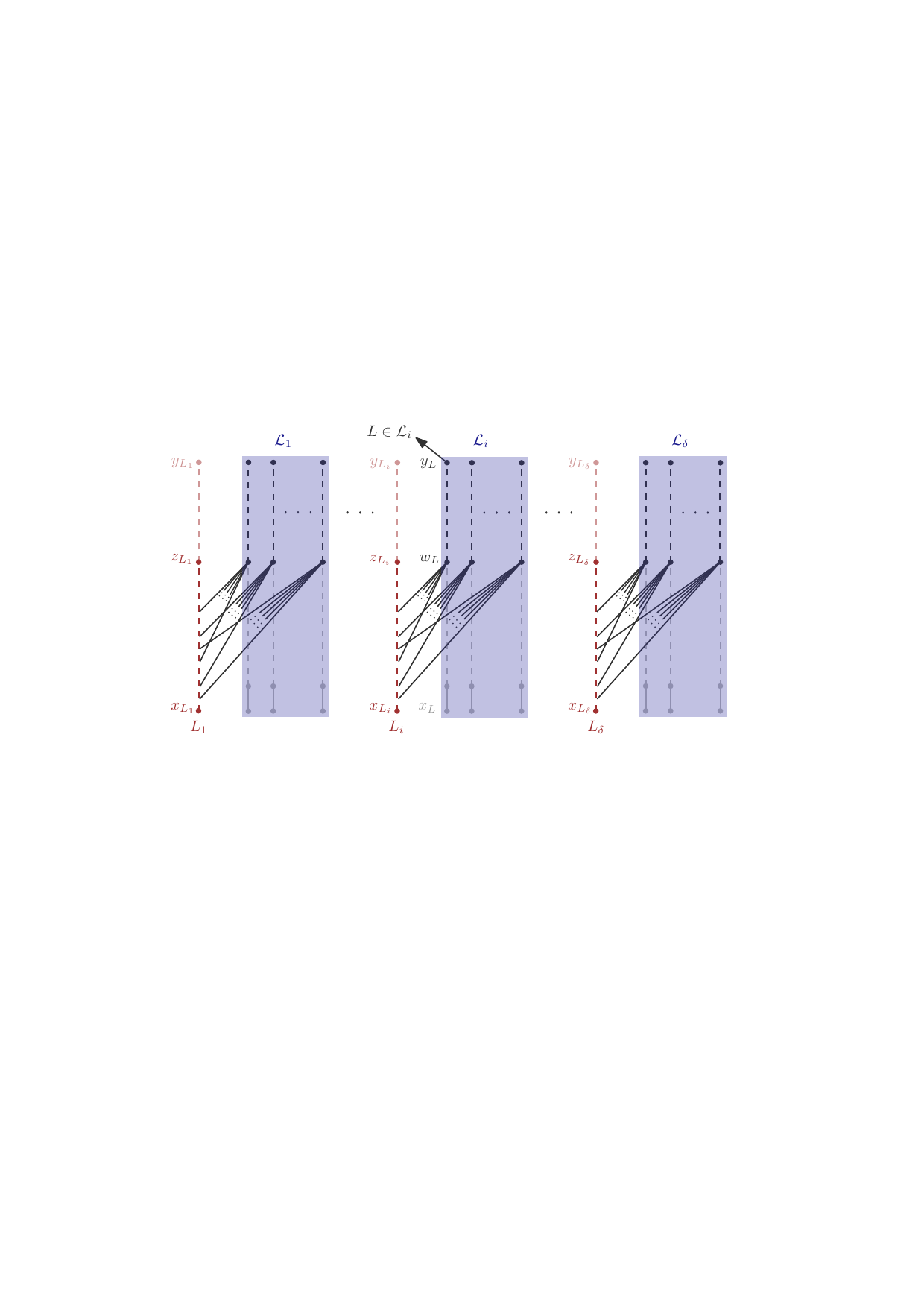}
    \caption{Lemma~\ref{lem:magic}. Dashed lines represent paths of arbitrary length (possibly zero).}
    \label{fig:magiclemma}
\end{figure}

\begin{proof}
 Since $G$ is $K_{t+1}$-free, it follows from Theorem~\ref{thm:classicalramsey} that there exists $\mca{K}_1\subseteq \mca{L}_0$ with 
 $$|\mca{K}_1|= 10\delta^{t+3}\lambda^3$$
 such that $\{x_L:L\in \mca{L}_0\}$ is a stable set in $G$. Let $D_1$ be the digraph with $V(D_1)=\mca{K}_1$ such that for distinct $L,L'\in \mca{K}$, we have $(L,L')\in E(D_1)$ if and only if $x_L$ has a neighbor in $L'$.
 \medskip

 Suppose that $D_1$ has at least $2\delta^2\lambda$ vertices of out-degree at least $\delta\lambda$. Applying Lemma~\ref{lem:digraph}\ref{lem:digraph_b} to $D_1$ (with $r=\delta\lambda$ and $q=s=\delta$), we deduce that there is a $\delta$-subset $\{L_1,\ldots,L_{\delta}\}$ of $\mca{K}_1\subseteq \mca{L}_0$ as well as $\delta$ pairwise disjoint $\lambda$-subsets $\mca{L}_1,\ldots, \mca{L}_{\delta}$ of $\mca{K}_1\setminus \{L_1,\ldots,L_{\delta}\}\subseteq \mca{L}_0\setminus \{L_1,\ldots,L_{\delta}\}$ such that for each $i\in \poi_{\delta}$, the vertex $x_{L_i}$ has neighbors in every path in $\mca{L}_{i}$. Moreover, since $\{x_L:L\in \mca{K}_1\}$ is a stable set in $G$, it follows that: 
\begin{itemize}
    \item $\{x_{L_i}: i\in \poi_{\delta}\}$ is a stable set in $G$; and
    \item for each $i\in \poi_{\delta}$ and every $L\in \mca{L}_i$, traversing $L$ from $y_L$ to $x_L$, the first neighbor $w_L$ of $x_{L_i}$ in $L$ is distinct from $x_L$. In particular, $w_L$ is the only neighbor of $x_{L_i}$ in $w_L\dd L\dd y_L$.
    \end{itemize}
But now we are done by setting $z_{L_i}=x_{L_i}$ for every $i\in \poi_{\delta}$.
\medskip

Henceforth, assume that there are at most $2\delta^2\lambda$ vertices of out-degree at least $\delta\lambda$ in $D_1$. Since $2\delta^2\lambda\leq 2\delta^{t+3}\lambda^3$, it follows that there are at least $|V(D_1)|-2\delta^{t+3}\lambda^3=8\delta^{t+3}\lambda^3$ vertices of out-degree at most $\delta\lambda$ in $D_1$. Thus, applying Lemma~\ref{lem:digraph}\ref{lem:digraph_a} to $D_1$ (with $r=\delta\lambda$ and $s=4\delta^{t+2}\lambda^2$), it follows that there is a stable set $\mca{K}_2\subseteq \mca{K}_1=V(D_1)$ in $D_1$ with
$$|\mca{K}_2|=4\delta^{t+2}\lambda^2.$$

Specifically, we have:

\sta{\label{st:endsanti} For all distinct $L,L'\in \mca{K}_2$, the end $x_{L}$ of $L$ is anticomplete to $L'$ in $G$.}

Recall also that no two paths $L, L'\in \mca{K}_2\subseteq \mca{L}_0$ are anticomplete in $G$. In particular, since $|\mca{K}_2|=4\delta^{t+2}\lambda^2>\delta\lambda$, it follows that for every $L\in \mca{K}_2$, there are at least $\delta\lambda$ paths $L'\in \mca{K}_2\setminus \{L\}$ such that $L,L'$ are not anticomplete in $G$. This, combined with \eqref{st:endsanti}, implies that:

\sta{\label{st:thetrick} For every $L\in \mca{K}_2$, there are distinct and adjacent vertices $z^-_{L},z_L\in L$ such that $L$ traverses $x_L,z^-_L,z_L,y_L$ in order (where $x_L=z^-_L$ and $z_L=y_L$ are both possible), and the following hold.
\begin{itemize}
   \raggedright \item There are at least $\delta\lambda$ paths $L'\in \mca{K}_2\setminus \{L\}$ for which $x_L\dd L\dd z_L$ and $L'$ are not anticomplete in $G$.
    \item There are at most $\delta\lambda$ paths $L'\in \mca{K}_2\setminus \{L\}$ for which $x_L\dd L\dd z^-_L$ and $L'$ are not anticomplete in $G$.
\end{itemize}}

Next, let $D_2$ be the digraph with $V(D_2)=\mca{K}_2$ such that for all distinct $L,L'\in \mca{K}_2$, we have $(L,L')\in E(D_2)$ if and only if $x_L\dd L\dd z_L$ and $L'$ are not anticomplete in $G$.
\medskip

By the first bullet of \eqref{st:thetrick}, every vertex has out-degree at least $\delta\lambda$ in $D_2$. Recall also that $|V(D_2)|=|\mca{K}_2|=4\delta^{t+2}\lambda^2$. Thus, applying Lemma~\ref{lem:digraph}\ref{lem:digraph_b} to $D_2$ (with $q=\delta$, $r=\lambda$ and $s=2\delta^{t+1}\lambda$), it follows that:

\sta{\label{st:manyseemany} There is a $(2\delta^{t+1}\lambda)$-subset $\mca{K}_3$ of $\mca{K}_2=V(D_2)$ with the following property: for every $\delta$-subset $\{L_1,\ldots,L_{\delta}\}$ of $\mca{K}_3$, there are $\delta$ pairwise disjoint $\lambda$-subsets $\mca{L}_1,\ldots, \mca{L}_{\delta}$ of $\mca{K}_2\setminus \mca{K}_3$ such that for all $i\in \poi_{\delta}$ and $L\in \mca{L}_{i}$, the paths $x_{L_i}\dd L_i\dd z_{L_i}$ and $L$ are not anticomplete in $G$.} 

From now on, let $\mca{K}_3$ be as given by \eqref{st:manyseemany}. We claim that:

\sta{\label{st:shortestanti} There is a $\delta$-subset $\{L_1,\ldots,L_{\delta}\}$ of $\mca{K}_3$ for which $(x_{L_i}\dd L_i\dd z_{L_i}: i\in \poi_{\delta})$ are pairwise anticomplete in $G$.}

To see this, let $D_3$ be the digraph with $V(D_3)=\mca{K}_3$ such that for all distinct $L,L'\in \mca{K}_3$, we have $(L,L')\in E(D_3)$ if and only if $x_L\dd L\dd z^-_L$ and $L'$ are not anticomplete in $G$. Since $\mca{K}_3\subseteq \mca{K}_2$, it follows from the second bullet of \eqref{st:thetrick} that every vertex in $D_3$ has out-degree at most $\delta\lambda$. Recall also that by \eqref{st:manyseemany}, we have  $|V(D_3)|=|\mca{K}_3|=2\delta^{t+1}\lambda$. Thus, applying Lemma~\ref{lem:digraph}\ref{lem:digraph_a} to $D_3$ (with $r=\delta\lambda$ and $q=\delta^t$), we deduce that there is a stable set $\mca{K}_4\subseteq \mca{K}_3=V(D_3)$ of cardinality $\delta^t$ in $D_3$. It follows that for all distinct $L,L'\in \mca{K}_4$, the paths $x_L\dd L\dd z^-_L$ and $L'$ are anticomplete in $G$; in particular, $x_L\dd L\dd z^-_L$ and $x_{L'}\dd L'\dd z_{L'}$ are anticomplete in $G$. Also, since $|\mca{K}_4|=\delta^t$ and since $G$ is $K_{t+1}$-free, it follows from Theorem~\ref{thm:classicalramsey} that there are $\delta$ paths $L_1,\ldots, L_{\delta}\in \mca{K}_4\subseteq \mca{K}_3$ for which $\{z_{L_i}:i\in \poi_{\delta}\}$ is a stable set in $G$. But now $(x_{L_i}\dd L_i\dd z_{L_i}: i\in \poi_{\delta})$ are pairwise anticomplete in $G$. This proves \eqref{st:shortestanti}.
\medskip

We can now finish the proof. Let $\{L_1,\ldots,L_{\delta}\}$ be the $\delta$-subset of $\mca{K}_3$ given by \eqref{st:shortestanti}. By \eqref{st:manyseemany}, there are $\delta$ pairwise disjoint $\lambda$-subsets $\mca{L}_1,\ldots, \mca{L}_{\delta}$ of $\mca{K}_2\setminus \mca{K}_3\subseteq \mca{K}_2\setminus \{L_1,\ldots,L_{\delta}\}\subseteq \mca{L}_0\setminus \{L_1,\ldots,L_{\delta}\}$, such that for all $i\in \poi_{\delta}$ and $L\in \mca{L}_{i}$, the paths $x_{L_i}\dd L_i\dd z_{L_i}$ and $L$ are not anticomplete in $G$. Moreover, 
\begin{itemize}
    \item By \eqref{st:shortestanti}, the paths $(x_{L_i}\dd L_i\dd z_{L_i}: i\in \poi_{\delta})$ are pairwise anticomplete in $G$.
    \item For all $i\in \poi_{\delta}$ and $L\in \mca{L}_i$, since $L_i,L\in \mca{K}_2\subseteq \mca{K}_1$, it follows from \eqref{st:endsanti} that traversing $L$ from $y_L$ to $x_L$, the first vertex $w_L$ in $L$ with a neighbor in $x_{L_i}\dd L_i\dd z_{L_i}$ is distinct from $x_L$. In particular, $w_L$ is the only vertex in $w_L\dd L\dd y_L$ with a neighbor in $x_{L_i}\dd L_i\dd z_{L_i}$.
    \end{itemize}

But now $(L_i,z_{L_i},\mca{L}_i:i\in \poi_{\delta})$ satisfy \ref{lem:magic}\ref{lem:magic_a} and \ref{lem:magic}\ref{lem:magic_b}. This completes the proof of Lemma~\ref{lem:magic}.\end{proof}

We need one more lemma. The proof relies on the product version of Ramsey's theorem:

\begin{theorem}[Graham, Rothschild, Spencer \cite{productramsey}]\label{thm:productramsey}
For all  $n,q,r\in \poi$, there is a constant  $f_{\ref{thm:productramsey}}=f_{\ref{thm:productramsey}}(n,q,r)\in \poi$ with the following property. Let $U_1,\ldots, U_n$ be $n$ sets, each of cardinality at least $f_{\ref{thm:productramsey}}$ and let $W$ be a non-empty set of cardinality at most $r$. Let $\Phi$ be a map from the Cartesian product $U_1\times \cdots \times U_n$ to $W$. Then there exist $i\in W$ and a $q$-subset $Z_j$ of $U_j$ for each $j\in \poi_{n}$, such that for every $z\in Z_1\times \cdots\times Z_n$, we have $\Phi(z)=i$.
\end{theorem}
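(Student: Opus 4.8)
The plan is to prove Theorem~\ref{thm:productramsey} by induction on the number $n$ of factors, using nothing beyond the pigeonhole principle. For the base case $n=1$, take $f_{\ref{thm:productramsey}}(1,q,r)=qr$: if $|U_1|\geq qr$ then, since $\Phi$ uses at most $r$ colours on $U_1$, some colour class is a subset of $U_1$ of size at least $q$, which is the required $Z_1$.

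For the inductive step, fix $n\geq 2$, assume the statement for $n-1$ factors (with arbitrary values of the second and third parameters), and set $f_{\ref{thm:productramsey}}(n,q,r)=\max\{qr,\,f_{\ref{thm:productramsey}}(n-1,q,r^{qr})\}$. Given $U_1,\dots,U_n$ of cardinality at least this bound and a map $\Phi\colon U_1\times\cdots\times U_n\to W$, first replace $U_n$ by a subset $\{a_1,\dots,a_{qr}\}$ of size exactly $qr$. The key idea is to compress the last coordinate into the colour set: define
\[\Psi\colon U_1\times\cdots\times U_{n-1}\to W^{qr},\qquad \Psi(u_1,\dots,u_{n-1})=\bigl(\Phi(u_1,\dots,u_{n-1},a_1),\dots,\Phi(u_1,\dots,u_{n-1},a_{qr})\bigr),\]
a colouring of an $(n-1)$-fold Cartesian product with at most $r^{qr}$ colours. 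Since $|U_j|\geq f_{\ref{thm:productramsey}}(n-1,q,r^{qr})$ for each $j\in\poi_{n-1}$, the inductive hypothesis yields $q$-subsets $Z_j\subseteq U_j$ for $j\in\poi_{n-1}$ together with a fixed vector $(c_1,\dots,c_{qr})\in W^{qr}$ on which $\Psi$ is constant over $Z_1\times\cdots\times Z_{n-1}$.

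It then remains to apply pigeonhole once more, now to the vector $(c_1,\dots,c_{qr})$: at least $q$ of its $qr$ entries share a common value $i\in W$, say $c_{k_1}=\dots=c_{k_q}=i$, and we set $Z_n=\{a_{k_1},\dots,a_{k_q}\}$. Unwinding the definition of $\Psi$, for every $(u_1,\dots,u_{n-1})\in Z_1\times\cdots\times Z_{n-1}$ and every $m\in\poi_q$ we have $\Phi(u_1,\dots,u_{n-1},a_{k_m})=c_{k_m}=i$, so $\Phi$ is constantly $i$ on $Z_1\times\cdots\times Z_n$, as required. I do not expect any genuine obstacle beyond the encoding trick itself and the parameter bookkeeping: the colour-set size is raised from $r$ to $r^{qr}$ at each level of the recursion, so $f_{\ref{thm:productramsey}}(n,q,r)$ comes out as a tower of exponentials of height about $n$, which is harmless since $n$, $q$ and $r$ are fixed constants.
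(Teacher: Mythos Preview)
Your proof is correct and follows the standard inductive argument for the product Ramsey theorem. Note, however, that the paper does not prove Theorem~\ref{thm:productramsey} at all: it is quoted from Graham, Rothschild and Spencer \cite{productramsey} as a known tool, so there is no ``paper's own proof'' to compare against. Your argument would serve perfectly well as a self-contained proof were one needed.
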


We will use the following both here and in the next section:

\begin{lemma}\label{lem:bigramsey}
    For all $r,s,t\in \poi$, there is a constant $f_{\ref{lem:bigramsey}}=f_{\ref{lem:bigramsey}}(r,s,t)\in \poi$ with the following property. Let $G$ be a graph with no induced $K_{t,t}$-model and let $\mca{U}$ be a set of pairwise disjoint connected induced subgraphs of $G$. Assume that there is a $2rt$-subset $\mca{A}$ of $\mca{U}$ as well as $2rt$ pairwise disjoint $f_{\ref{lem:bigramsey}}$-subsets $(\mca{B}_U:U\in \mca{A})$ of $\mca{U}\setminus \mca{A}$ such that
    
    \begin{itemize}
        \item the sets in $\mca{A}$ are pairwise anticomplete in $G$; and
        \item for every $U\in \mca{A}$, the sets in $\mca{B}_{U}$ are pairwise anticomplete in $G$.
    \end{itemize}
    
Then there are $A_1,\ldots,A_{r}\in \mca{A}$ along with an $s$-subset $\mca{B}_i$ of $\mca{B}_{A_i}$ for each $i\in \poi_{r}$, such that $(A_i\cup V(\mca{B}_i):i\in \poi_{r})$ are pairwise anticomplete in $G$. In particular, we have $f_{\ref{lem:bigramsey}}\geq s$.
\end{lemma}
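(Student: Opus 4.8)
The plan is to prove this by induction on $r$, with the inductive step powered by Theorem~\ref{thm:productramsey} (the product Ramsey theorem) and the hypothesis that $G$ has no induced $K_{t,t}$-model. The base case $r=1$ is trivial: pick any $A_1\in\mca{A}$ and any $s$-subset $\mca{B}_1$ of $\mca{B}_{A_1}$ (we will choose $f_{\ref{lem:bigramsey}}\geq s$, which will fall out of the recursion), and then $A_1\cup V(\mca{B}_1)$ is a single set, so there is nothing to check. For the inductive step, suppose the lemma holds for $r-1$ with some constant $f_{\ref{lem:bigramsey}}(r-1,s,t)$; I want to produce $r$ pairwise anticomplete sets of the form $A_i\cup V(\mca{B}_i)$.

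The key idea is the following. Fix one set $A_1\in\mca{A}$ and look at how $A_1$ together with its private family $\mca{B}_{A_1}$ interacts with the other sets $U\in\mca{A}\setminus\{A_1\}$ and their families $\mca{B}_U$. For each such $U$, and each $B\in\mca{B}_U$, record the pair $(\text{whether }A_1\text{ touches }B,\ \text{whether }V(\mca{B}_{A_1})\text{ touches }B)\in\{0,1\}^2$ — actually it is cleaner to first use the $K_{t,t}$-freeness to control how many members of $\mca{B}_U$ can be hit. Here is where the no-induced-$K_{t,t}$-model hypothesis enters: if $A_1\cup V(\mca{B}_{A_1})$ (which is connected, being $A_1$ with connected pieces attached — wait, it need not be connected; instead use that $A_1$ is connected, and separately that each $B\in\mca{B}_{A_1}$ is connected) had neighbors into $t$ of the connected sets $B^{(1)},\dots,B^{(t)}\in\mca{B}_U$ for each of $t$ distinct choices of $U$, we would be able to assemble an induced $K_{t,t}$-model — the branch sets on one side being suitable sub-pieces of $A_1$ and of $\mca{B}_{A_1}$, on the other side the $B^{(j)}$'s — contradicting the hypothesis. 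One has to be slightly careful to get genuinely \emph{disjoint, connected, pairwise anticomplete except where required} branch sets, but the anticompleteness assumptions in the hypothesis ($\mca{A}$ pairwise anticomplete, each $\mca{B}_U$ pairwise anticomplete) are exactly what make this assembly go through. The upshot is a bound, depending only on $t$, on the number of $U\in\mca{A}$ for which $A_1\cup V(\mca{B}_{A_1})$ can be non-anticomplete to ``too much'' of $\mca{B}_U$; discarding those, and for the surviving $U$ discarding the few members of $\mca{B}_U$ that are touched, leaves us with $\geq 2(r-1)t$ sets in $\mca{A}$, each still equipped with a large private family anticomplete to $A_1\cup V(\mca{B}_{A_1})$.

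Then one applies the induction hypothesis to this reduced instance (with parameter $r-1$), obtaining $A_2,\dots,A_r$ and $s$-subsets $\mca{B}_2,\dots,\mca{B}_r$ with $(A_i\cup V(\mca{B}_i):2\leq i\leq r)$ pairwise anticomplete; and since everything was chosen anticomplete to $A_1\cup V(\mca{B}_{A_1})$, together with the original choice $\mca{B}_1\subseteq\mca{B}_{A_1}$ this gives the desired $r$ sets. The arithmetic on $f_{\ref{lem:bigramsey}}$ is: start from $f_{\ref{lem:bigramsey}}(r-1,s,t)$, add the (bounded in $t$) number of members of each $\mca{B}_U$ that must be deleted, and possibly feed the whole count through $f_{\ref{thm:productramsey}}$ to make the ``touch pattern'' homogeneous before extracting the clean structure; since all added quantities depend only on $t$ (and $s$), this terminates after $r$ steps with a finite constant, and $f_{\ref{lem:bigramsey}}\geq s$ since at the base we need an $s$-subset. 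I expect the main obstacle to be the bookkeeping in the $K_{t,t}$-model assembly: making sure the branch sets extracted from $A_1$ and from $V(\mca{B}_{A_1})$ on one side, and from the $\mca{B}_U$'s on the other, are simultaneously connected, pairwise disjoint, anticomplete across non-edges of $K_{t,t}$, and each adjacent to the corresponding set on the other side — this requires choosing, for each relevant $U$, a single $B_U\in\mca{B}_U$ that is hit, and then arguing that a common small sub-piece of $A_1\cup V(\mca{B}_{A_1})$ can serve all of them, which is where one invokes a further (bounded) Ramsey/pigeonhole reduction. Everything else is routine.
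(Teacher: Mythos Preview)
Your inductive plan has a real gap at the step where you claim ``a bound, depending only on $t$, on the number of $U\in\mca{A}$ for which $A_1\cup V(\mca{B}_{A_1})$ can be non-anticomplete to `too much' of $\mca{B}_U$.'' This is false as stated. Take $t=2$, let every set be a singleton, and for each $U\in \mca{A}\setminus\{A_1\}$ assign a \emph{distinct} $B_U\in \mca{B}_{A_1}$ adjacent to every member of $\mca{B}_U$ and to nothing else. Then every $U$ is ``bad'' (all of $\mca{B}_U$ is touched by $V(\mca{B}_{A_1})$), yet there is no induced $K_{2,2}$-model: all edges live in disjoint stars. Your sketched $K_{t,t}$-assembly cannot work here, because ``a common small sub-piece of $A_1\cup V(\mca{B}_{A_1})$'' is a single branch set, not the $t$ pairwise anticomplete branch sets needed on one side; the adjacencies are spread over different members of $\mca{B}_{A_1}$, and no pigeonhole on a single index will line them up. You also do not address the symmetric constraint that $V(\mca{B}_1)$ be anticomplete to the sets $U$ themselves.

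The paper's proof is not inductive on $r$. It applies the product Ramsey theorem once to all $2rt$ families $(\mca{B}_{U_i}:i\in\poi_{2rt})$ simultaneously, coloring each tuple $(B_1,\ldots,B_{2rt})$ by the full pattern $(E_z,E'_z)$ recording which $B_i$ touch which $U_j$ and which $B_i$ touch which $B_j$. After passing to homogeneous sub-families $\mca{B}'_{U_i}$ of size $\max\{s,t\}$, the point is that if some $(i,j)\in E$ then \emph{every} $B\in\mca{B}'_{U_i}$ touches $U_j$; this is precisely what supplies $t$ pairwise anticomplete connected sets on one side of a $K_{t,t}$-model, so $K_{t,t}$-freeness forces every vertex of the digraph $(\poi_{2rt},E)$ to have out-degree $<t$ and forces $E'=\varnothing$. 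Lemma~\ref{lem:digraph}\ref{lem:digraph_a} then gives a stable set of size $r$ in this digraph, and the corresponding $A_i$'s with $s$-subsets of $\mca{B}'_{U_i}$ are the answer. The homogenization step is exactly the missing idea in your sketch; without it there is no way to manufacture the $t$ parallel branch sets needed to invoke the no-$K_{t,t}$ hypothesis.
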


\begin{proof}
    Let
$$f_{\ref{lem:bigramsey}}=f_{\ref{lem:bigramsey}}(r,s,t)=f_{\ref{thm:productramsey}}\left(2rt,\max\{s,t\},2^{\displaystyle 4r^2t^2\binom{2rt}{2}}\right).$$
Fix an enumeration $\mca{A}=\{U_1,\ldots, U_{2rt}\}$. For every $z=(B_1,\ldots, B_{2rt})\in \mca{B}_{U_1}\times \cdots\times \mca{B}_{U_{2rt}}$,
\begin{itemize}
 \item let $E_{z}$ be the set of all ordered pairs $(i,j)\in \poi_{2rt}\times \poi_{2rt}$ with $i\neq j$ for which $B_i$ and $U_j$ are not anticomplete in $G$; and
    \item let $E'_{z}\in \mca{G}$ be the set of all $2$-subsets $\{i,j\}$ of $\poi_{2rt}$ for which $B_i$ and $B_j$ are not anticomplete in $G$.
\end{itemize}

It follows that the function $\Phi:\mca{B}_{U_1}\times \cdots\times \mca{B}_{U_{2rt}}\rightarrow 2^{\poi_{2rt}\times \poi_{2rt}}\times 2^{\binom{\poi_{2rt}}{2}}$  with $\Phi(z)=(E_{z},E'_{z})$ is well-defined. By Theorem~\ref{thm:productramsey} and the choice of $f_{\ref{lem:bigramsey}}$, we obtain $E\subseteq \poi_{2rt}\times \poi_{2rt}$ and $E'\subseteq \binom{\poi_{2rt}}{2}$, as well as a $\max\{s,t\}$-subset $\mca{B}'_{U_i}$ of $\mca{B}_{U_i}$ for each $i\in \poi_{2rt}$, such that for every $z\in \mca{B}'_{U_1}\times \cdots\times \mca{B}'_{U_{2rt}}$, we have $\Phi(z)=(E,E')$.

Let $D$ be the digraph with vertex set $\poi_{2rt}$ and edge set $E$. We claim that:

\sta{\label{st:Eempty}Every vertex in $D$ has out-degree less than $t$.}

Suppose not. Then there are $i,j_1,\ldots,j_t\in \poi_{2rt}$ such that $(i,j_1),\ldots, (i,j_t)\in E$. Since $|\mca{B}'_{U_i}|=\max\{s,t\}$, we may choose $t$ distinct sets $X_1,\ldots, X_t\in \mca{B}'_{U_i}$. It follows that $X_1,\ldots, X_t\in \mca{B}'_{U_i}\subseteq \mca{B}_{U_i}$ are pairwise anticomplete in $G$. Recall also that $U_{j_1},\ldots, U_{j_t}\in \mca{A}$ are pairwise anticomplete in $G$. Moreover, for every $z\in \mca{B}'_{U_1}\times \cdots\times \mca{B}'_{U_{2rt}}$, since $\Phi(z)=(E,E')$, it follows that $E_z=E$, and so $(i,j_1),\ldots, (i,j_t)\in E_z$. In particular, for all $k,l\in \poi_{t}$, the sets $X_k, U_{j_l}$ are not anticomplete in $G$. But now $(X_1,\ldots, X_t,U_{j_1},\ldots, U_{j_t})$ is an induced $K_{t,t}$-model in $G$, a contradiction. This proves \eqref{st:Eempty}.

\sta{\label{st:E'empty}$E'=\varnothing$.}

Suppose that some $2$-subset $\{i,j\}$ of $\poi_{2rt}$ belongs to $E'$. Since $|\mca{B}'_{U_i}|=|\mca{B}'_{U_j}|=\max\{s,t\}$, we may choose $t$ distinct sets $X_1,\ldots, X_t\in \mca{B}'_{U_i}$ and $t$ distinct sets $Y_1,\ldots, Y_t\in \mca{B}'_{U_j}$. It follows that $X_1,\ldots, X_t\in \mca{B}'_{U_i}\subseteq \mca{B}_{U_i}$ are pairwise anticomplete in $G$, and so are $Y_1,\ldots, Y_t\in \mca{B}'_{U_j}\subseteq \mca{B}_{U_j}$. Also, for every $z\in \mca{B}'_{U_1}\times \cdots\times \mca{B}'_{U_{2rt}}$, since $\Phi(z)=(E,E')$, it follows that $E'_z=E'$, and so $\{i,j\}\in E'_z$. In particular, for all $k,l\in \poi_{t}$, the sets $X_k, Y_{l}$ are not anticomplete in $G$. But now $(X_1,\ldots, X_t,Y_1,\ldots, Y_t)$ is an induced $K_{t,t}$-model in $G$, a contradiction. This proves \eqref{st:E'empty}.
\medskip

Since $|V(D)|=2rt$, it follows from \eqref{st:Eempty} and Lemma~\ref{lem:digraph}\ref{lem:digraph_a} that $D$ contains a stable set $\{k_1,\ldots,k_{r}\}\subseteq \poi_{2rt}$. For every $i\in \poi_{r}$, let $A_i=U_{k_i}$ and choose an $s$-subset $\mca{B}_i$ of $\mca{B}'_{U_{k_i}}\subseteq \mca{B}_{U_{k_i}}$ (this is possible because $|\mca{B}'_{U_{k_i}}|=\max\{s,t\}$). Recall that $A_1,\ldots, A_{r}\in \mca{A}$ are pairwise anticomplete in $G$. Since $\{k_1,\ldots,k_{r}\}$ is a stable set in $D$, it follows that for all distinct $i,j\in \poi_{r}$, we have $(i,j)\notin E$, and so $A_i$ and $V(\mca{B}_j)$ are anticomplete in $G$. Also, by \eqref{st:E'empty}, the sets $V(\mca{B}_1), \ldots, V(\mca{B}_{r})$ are pairwise anticomplete in $G$. But now $(A_i\cup V(\mca{B}_i):i\in \poi_{r})$ are pairwise anticomplete in $G$. This completes the proof of Lemma~\ref{lem:bigramsey}.
\end{proof}

We can now prove the main result of this section, which we restate:

\seedlingbranches*

\begin{proof}
Let
$$f_{\ref{thm:seedling_branches}}=f_{\ref{thm:seedling_branches}}(t,\delta,\lambda,\kappa)=\kappa^{\displaystyle\left(10(\delta+3t\kappa)^{t+3}\lambda^3\right)^t};$$
and let
$$g_{\ref{thm:seedling_branches}}=g_{\ref{thm:seedling_branches}}(t,\kappa)=f_{\ref{lem:bigramsey}}(\kappa,1,t).$$

Let $G$ be a $t$-tidy graph and let $(A,\mca{L},Y)$ be an $f_{\ref{thm:seedling_branches}}$-seedling in $G$ which is $\kappa$-rigid.

\sta{\label{st:notanti}There is a
$\left(10(\delta+3t\kappa)^{t+3}\lambda^3\right)^t$-subset $\mca{L}_0$ of $\mca{L}$ such that no two paths $L,L'\in \mca{L}_0$ are anticomplete in $G$.}

Let $\Gamma$ be a graph with $V(\Gamma)=\mca{L}$ such that for distinct $L,L'\in \mca{L}$, we have $LL'\in E(\Gamma)$ if and only if $L$ and $L'$ are not anticomplete in $G$. Since $|V(\Gamma)|=|\mca{L}|=f_{\ref{thm:seedling_branches}}$, it follows from Theorem~\ref{thm:classicalramsey} that $\Gamma$ contains either a stable set of cardinality $\kappa$ or a clique of cardinality $(30(\delta+3t\kappa)^{t+3}\lambda^3)^t$. In the former case, there is a set $\mca{K}\subseteq \mca{L}$ of $\kappa$ pairwise anticomplete $(N(A),Y)$-paths in $G$, a contradiction to the assumption that $(A,\mca{L},Y)$ is $\kappa$-rigid. So the latter case holds; that is, there is a $(30(\delta+3t\kappa)^{t+3}\lambda^3)^t$-subset $\mca{L}_0$ of $\mca{L}$ such that no two paths $L,L'\in \mca{L}_0$ are anticomplete in $G$. This proves \eqref{st:notanti}.
\medskip

Henceforth, let $\mca{L}_0\subseteq \mca{L}$ be as given by \eqref{st:notanti}. In particular, $\mca{L}_0$ is a set of pairwise disjoint $(N(A),Y)$-paths in $G$, and so each path $L\in \mca{L}_0$ has an $N(A)$-end $x_L$ and a $Y$-end $y_L$. We apply Lemma~\ref{lem:magic} to $\mca{L}_0$ along with the labelling $x_L,y_L$ of the ends of each path $L\in \mca{L}$. Since $G$ is $K_{t+1}$-free and $|\mca{L}_0|=(10(\delta+3t\kappa)^{t+3}\lambda^3)^t$, we deduce that:

\sta{\label{st:magicapp}
There is a $(\delta+3t\kappa)$-subset $\mca{P}$ of  $\mca{L}_0$, a vertex $z_{P}\in P$ for each $P\in \mca{P}$, and $\delta+3t\kappa$ pairwise disjoint $\lambda$-subsets $(\mca{Q}_P:P\in \mca{P})$ of $\mca{L}_0\setminus \mca{P}$, such that the following hold. 
    \begin{itemize}
    \item The paths $(x_{P}\dd P\dd z_{P}: P\in \mca{P})$ are pairwise anticomplete in $G$.
    \item For each $P\in \mca{P}$, every $Q\in \mca{Q}_L$ contains a vertex $w_{Q}$ distinct from $x_{Q}$ such that $w_{Q}$ is the only vertex in $w_{Q}\dd Q\dd y_{Q}$ with a neighbor in $x_{P}\dd P\dd z_{P}$.
    \end{itemize}}

    From now on, let $\mca{P}$ and $(z_P,\mca{Q}_P:P\in \mca{P})$ be as given by \eqref{st:magicapp}. For every $P\in \mca{P}$, let 
    $$A_P=x_P\dd P\dd z_P;$$
    let
    $$\mca{L}_P=\{w_{Q}\dd Q\dd y_{Q}:Q\in \mca{Q}_P\};$$
    and let
    $$Y_P=\{y_Q:Q\in \mca{Q}_P\}.$$

Then $A_P\subseteq P$ is a path in $G\setminus A$ and $Y_P\subseteq V(G\setminus A)\setminus A_P$. Moreover, by the second bullet of \eqref{st:magicapp}, for every $Q\in \mca{Q}_P$, the path $w_Q\dd Q\dd y_P$ is an $(N(A_{P}),Y_P)$-path in $G\setminus A_P$, and so $\mca{L}_P$ is a set of $\lambda$ pairwise disjoint $(N(A_{P}),Y_P)$-paths in $G\setminus A_P$. Note also that by construction, $(A_P\cup V(\mca{L}_P)\cup Y_P:P\in \mca{P})$ are pairwise disjoint subsets of
$V(\mca{L}_0)\subseteq G\setminus A$. Therefore,

\sta{\label{st:trivialseedling} The triples $((A_P,\mca{L}_P,Y_P):P\in \mca{P})$ are pairwise disjoint $\lambda$-seedlings in 
$G\setminus A$.}

We also show that:

\sta{\label{st:seedlingclose}The following hold.
\begin{itemize}
    \item The paths $(A_P:P\in \mca{P})$ are pairwise anticomplete in $G$.
    \item For every $P\in \mca{P}$, we have:
    \begin{itemize}
        \item $A$ and $A_P$ are not anticomplete in $G$; and
        \item $A$ and $V(\mca{L}_P)$ are anticomplete in $G$.
    \end{itemize}
\end{itemize}}

The first assertion is immediate from the first bullet of \eqref{st:magicapp}. We prove the second assertion. Let $P\in \mca{P}$. Then $x_P\in A_P$ has a neighbor in $A$ (because $x_P$ is the $N(A)$-end of $P$), and so $A$ and $A_P$ are not anticomplete in $G$. Moreover, by the second bullet of \eqref{st:magicapp}, we have
$$V(\mca{L}_P)\subseteq \bigcup_{Q\in \mca{Q}_P}Q\setminus \{x_Q\}.$$ 
Recall also that each path $Q\in \mca{Q}_P$ is an $(N(A),Y)$-path in $G\setminus A$ where $x_Q$ is the $N(A)$-end of $Q$, and so $A$ and $Q\setminus \{x_Q\}$ are anticomplete in $G$. It follows that $A$ and $V(\mca{L}_P)$ are anticomplete in $G$. This proves \eqref{st:seedlingclose}.

\sta{\label{st:seedlingrigid}  
There are $P_1,\ldots, P_{\delta}\in \mca{P}$ such that $(A_{P_i},\mca{L}_{P_i},Y_{P_i})$ is $g_{\ref{thm:seedling_branches}}$-rigid for every $i\in \poi_{\delta}$.}

Suppose not. Recall that by \eqref{st:magicapp}, we have $|\mca{P}|=\delta+3t\kappa$. So there is $3t\kappa$-subset $\mca{A}$ of $\mca{P}$ such that for every $P\in \mca{A}$, the $\lambda$-seedling $(A_{P},\mca{L}_{P},Y_{P})$ in $G\setminus A$ is not $g_{\ref{thm:seedling_branches}}$-rigid. By definition, this means for every $P\in \mca{K}$, there is a set $\mca{K}_P$ of $g_{\ref{thm:seedling_branches}}$ pairwise anticomplete $(N(A_P),Y_P)$-paths in $G$ with $V(\mca{K}_P)\subseteq V(\mca{L}_P)$. Also, by the first bullet of \eqref{st:seedlingclose}, the paths $(A_P:P\in \mca{A})$ are anticomplete. Since $G$ has no induced $K_{t,t}$-model, and since $g_{\ref{thm:seedling_branches}}=f_{\ref{lem:bigramsey}}(\kappa,1,t)$, it follows from Lemma~\ref{lem:bigramsey} that there are $P_1,\ldots, P_{\kappa}\in \mca{A}$ as well as $K_i\in \mca{K}_{P_i}$ for each $i\in \poi_{\kappa}$, such that $(A_{P_i}\cup K_i:i\in \poi_{\kappa})$ are pairwise anticomplete in $G$. Now, recall that for each $i\in \poi_{\kappa}$, the vertex $x_{P_i}\in A_{P_i}$ has a neighbor in $A$, and $K_i\in \mca{K}_{P_i}$ is an $(N(A_{P_i}),Y_{P_i})$-path in $G$, which in turn implies that $K_i$ has an end in $Y_{P_i}\subseteq Y$. Consequently, for every $i\in \poi_{\kappa}$, there is an $(N(A),Y)$-path $L_i$ in $G$ with $L_i\subseteq A_{P_i}\cup K_i$. Moreover, we have 
$$A_{P_i}\cup K_i\subseteq P_i\cup V(\mca{K}_{P_i})\subseteq P_i\cup V(\mca{L}_{P_i})\subseteq P_i\cup V(\mca{Q}_{P_i})$$
and so by \eqref{st:magicapp}, we have $A_{P_i}\cup K_i\subseteq V(\mca{L}_0)\subseteq V(\mca{L})$. But now $\mca{K}=\{L_1,\ldots, L_{\kappa}\}$ is a set of $\kappa$ pairwise anticomplete $(N(A),Y)$-paths in $G$ with $V(\mca{K})\subseteq V(\mca{L})$, violating the assumption that the seedling $(A,\mca{L},Y)$ is $\kappa$-rigid. This proves \eqref{st:seedlingrigid}.
\medskip

Let $\{P_1,\ldots, P_{\delta}\}\subseteq \mca{P}$ be as given by \eqref{st:seedlingrigid}. For each $i\in \poi_{\delta}$, let $A_i=A_{P_i}$, let $\mca{L}_i=\mca{L}_{P_i}$ and let $Y_i=Y_{P_i}$. From \eqref{st:trivialseedling}, \eqref{st:seedlingclose} and 
\eqref{st:seedlingrigid}, it follows that  $(A_1,\mca{L}_1,Y_1),\ldots, (A_{\delta},\mca{L}_{\delta},Y_{\delta})$ are $\delta$ pairwise disjoint $\lambda$-seedlings in $G\setminus A$ satisfying both \ref{thm:seedling_branches}\ref{thm:seedling_branches_a} and \ref{thm:seedling_branches}\ref{thm:seedling_branches_b}. This completes the proof of Theorem~\ref{thm:seedling_branches}.
\end{proof}

\section{From a seedling to a tree}\label{sec:seedlingtotree}

In this section, we prove Theorem~\ref{thm:seedling_to_tree}. As shown at the end of Section~\ref{sec:outline}, this will conclude the proof of Theorem~\ref{thm:main_tree_indm}.

\seedlingtotree*

\begin{proof}
First,  for each $r\in \poi$, we define a function $$\xi_{r}:\poi^3\rightarrow \poi.$$ The definition is recursive in $r$, as follows. For $r=1$, let 
$$\xi_{1}(a,b,c)=b^{a}$$ for every $(a,b,c)\in \poi^3$.  For $r\geq 2$, assuming the function $\xi_{r-1}$ is defined, let
$$\xi_{r}(a,b,c)=f_{\ref{thm:seedling_branches}}(a,2ab,\xi_{r-1}(a,f_{\ref{lem:bigramsey}}(b,b,a),g_{\ref{thm:seedling_branches}}(a,c)),c)$$
for every $(a,b,c)\in \poi^3$. This concludes the definition of the functions $(\xi_r: r\in \poi)$.
\medskip

Back to the proof of \ref{thm:seedling_to_tree}, we will prove by induction on $r\in \poi$, that for all $d,t,\kappa\in \poi$, 

$$f_{\ref{thm:seedling_to_tree}}=f_{\ref{thm:seedling_to_tree}}(d,r,t,\kappa)=\xi_r(t,d,\kappa)$$
satisfies the theorem.

Let $G$ be a $t$-tidy graph and let $(A,\mca{L},Y)$  be an $f_{\ref{thm:seedling_to_tree}}$-seedling in $G$ which is $\kappa$-rigid. Assume that $r=1$. Then we have $|\mca{L}|=f_{\ref{thm:seedling_to_tree}}(d,1,t,\kappa)=\xi_1(t,d,\kappa)=d^t$. For each $L\in \mca{L}$, let $x_L$ be the $N(A)$-end of $L$. Since $G$ is $K_{t+1}$-free, it follows from Theorem~\ref{thm:classicalramsey} that there is a $d$-subset $\{L_1,\ldots, L_d\}$ of $\mca{L}$ for which $\{x_{L_1}, \ldots, x_{L_d}\}$ is a stable set in $G$. But now $(A,\{x_{L_1}\}, \ldots, \{x_{L_d}\})$ is an induced $T_{d,1}$-model in $G$ where $A$ is the branch set associated with the root of $T_{d,1}$.

Therefore, we may assume that $r\geq 2$. Let
$$\Delta=f_{\ref{lem:bigramsey}}(d,d,t).$$
In particular, we have $\Delta\geq d$. Let $$\Lambda= \xi_{r-1}(t,\Delta,g_{\ref{thm:seedling_branches}}(t,\kappa)).$$
Then we have $$|\mca{L}|=f_{\ref{thm:seedling_to_tree}}(d,r,t,\kappa)=\xi_r(t,d,\kappa)=f_{\ref{thm:seedling_branches}}(t,2dt,\Lambda,\kappa).$$

Applying Theorem~\ref{thm:seedling_branches} to $(A,\mca{L},Y)$, we deduce that:

\sta{\label{st:seedingbranchesapp} There are $2dt$ pairwise disjoint $\Lambda$-seedlings $(A_1,\mca{L}_1,Y_1),\ldots, (A_{2dt},\mca{L}_{2dt},Y_{2dt})$
in $G\setminus A$ with the following specifications.
    \begin{itemize}
    \item The paths $A_1,\ldots, A_{2dt}$ are pairwise anticomplete in $G$.
    \item  For every $i\in \poi_{2dt}$, we have:
    \begin{itemize}
        \item $A$ and $A_i$ are not anticomplete in $G$;
        \item $A$ and $V(\mca{L}_i)$ are anticomplete in $G$; and
    \item $(A_i,\mca{L}_i,Y_i)$ is $g_{\ref{thm:seedling_branches}}(t,\kappa)$-rigid. 
    \end{itemize}
    \end{itemize}}
    Moreover, for every $i\in \poi_{2dt}$, since $(A_i,\mca{L}_i,Y_i)$ is a $g_{\ref{thm:seedling_branches}}(t,\kappa)$-rigid $\xi_{r-1}(t,\Delta,g_{\ref{thm:seedling_branches}}(t,\kappa))$-seedling in $G$, it follows from the inductive hypothesis applied to $(A_i,\mca{L}_i,Y_i)$ that there is an induced $T_{\Delta,r-1}$-model $(A_{i,v}:v\in V(T_{\Delta,r-1}))$ in $G[A_i\cup V(\mca{L}_i)]$ where $A_i$ is the branch set associated with the root of $T_{\Delta,r-1}$.

Let $u_0$ be the root of $T_{\Delta,r-1}$; thus, we have $A_i=A_{i,u_0}$ for every $i\in \poi_{2dt}$. Let $u_1,\ldots, u_{\Delta}$ be the neighbors of $u_0$ in $T_{\Delta,r-1}$ and let $T_1,\ldots, T_{\Delta}$ be the components of $T_{\Delta, r-1}\setminus \{u_0\}$ containing $u_1,\ldots, u_{\Delta}$, respectively. It follows that $T_i$, for each $i\in \poi_{\Delta}$, is isomorphic to $T_{\Delta,r-2}$ with root $u_i$ (in particular, $T_i$ is non-null because $r\geq 2$). Moreover, since $\Delta\geq d$, it follows that for every $i\in \poi_{\Delta}$, there is an induced subgraph $T^i$ of $T_i$ isomorphic to $T_{d,r-2}$ with root $u_i$.

For each $i\in \poi_{2dt}$ and every $j\in \poi_{\Delta}$, let 
$$B_i^j=\bigcup_{v\in V(T^j)}A_{i,v}.$$

Then $B_i^1,\ldots, B_i^{\Delta}$ are pairwise anticomplete in $G$ for every $i\in \poi_{2dt}$. Also, by the first bullet of \eqref{st:seedingbranchesapp}, the sets $A_1, \ldots, A_{2dt}$ are pairwise anticomplete in $G$. Since $\Delta=f_{\ref{lem:bigramsey}}(d,d,t)$, it follows from Lemma~\ref{lem:bigramsey} that there is a $d$-subset $I$ of $\poi_{2dt}$ as well as a $d$-subset $J_i\subseteq \poi_{\Delta}$ for each $i\in I$, such that the $d$ sets 
$$\left(A_i\cup \left(\bigcup_{j\in J_i}B^j_i\right):i\in I\right)$$
are pairwise anticomplete in $G$. 

Now, for every $i\in I$, the subgraph of $T_{\Delta,r-1}$ induced by $\{u_0\}\cup(\bigcup_{j\in J_i}V(T^j))$ is isomorphic to $T_{d,r-1}$. Also, 
by the second bullet of \eqref{st:seedingbranchesapp}, the sets $A$ and $A_i=A_{i,u_0}$ are not anticomplete in $G$, whereas $A$ and $\bigcup_{j\in J_i} \bigcup_{v\in V(T^j)}A_{i,v}=\bigcup_{j\in J_i}B^j_i\subseteq V(\mca{L}_i)$ are anticomplete in $G$. Hence,
$$\left(A; A_{i,v}:i\in I, v\in \{u_0\}\cup \left(\bigcup_{j\in J_i}V(T^j)\right)\right)$$
is an induced $T_{d,r}$-model in $G[A\cup V(\mca{L})]$ where $A$ is the branch set associated with the root of $T_{d,r}$. This completes the proof of Theorem~\ref{thm:seedling_to_tree}.
\end{proof}

\bibliographystyle{plain}
\bibliography{ref}

\begin{thebibliography}{10}

\bibitem{tw7}
Tara Abrishami, Bogdan Alecu, Maria Chudnovsky, Sepehr Hajebi, and Sophie Spirkl.
\newblock Induced subgraphs and tree decompositions {VII}. {B}asic obstructions in {$H$}-free graphs.
\newblock {\em J. Combin. Theory Ser. B}, 164:443--472, 2024.

\bibitem{tw9}
Bogdan Alecu, Maria Chudnovsky, Sepehr Hajebi, and Sophie Spirkl.
\newblock {I}nduced subgraphs and tree decompositions {IX}. {G}rid theorem for perforated graphs.
\newblock {\em {\rm Manuscript available at \url{https://arxiv.org/abs/2305.15615}}}, 2023.

\bibitem{deathstar}
Marthe Bonamy, \'{E}douard Bonnet, Hugues D\'{e}pr\'{e}s, Louis Esperet, Colin Geniet, Claire Hilaire, St\'{e}phan Thomass\'{e}, and Alexandra Wesolek.
\newblock Sparse graphs with bounded induced cycle packing number have logarithmic treewidth.
\newblock {\em J. Combin. Theory Ser. B}, 167:215--249, 2024.

\bibitem{brooks}
R.~L. Brooks.
\newblock On colouring the nodes of a network.
\newblock {\em Proc. Cambridge Philos. Soc.}, 37:194--197, 1941.

\bibitem{tw16}
Maria Chudnovsky, Sepehr Hajebi, and Sophie Spirkl.
\newblock {I}nduced subgraphs and tree decompositions {XVI}. {C}omplete bipartite induced minors.
\newblock {\em {\rm Manuscript available at \url{https://arxiv.org/abs/2410.16495}}}, 2024.

\bibitem{tw17}
Maria Chudnovsky, Sepehr Hajebi, and Sophie Spirkl.
\newblock {I}nduced subgraphs and tree decompositions {XVII}. {A}nticomplete sets of large of treewidth.
\newblock {\em {\rm Manuscript available at \url{https://arxiv.org/abs/2411.11842}}}, 2024.

\bibitem{davies}
James Davies.
\newblock Appeared in an Oberwolfach technical report, {\em DOI:10.4171/OWR/2022/1}.

\bibitem{diestel}
Reinhard Diestel.
\newblock {\em Graph theory}, volume 173 of {\em Graduate Texts in Mathematics}.
\newblock Springer, Berlin, fifth edition, 2018.

\bibitem{productramsey}
Ronald~L. Graham, Bruce~L. Rothschild, and Joel~H. Spencer.
\newblock {\em Ramsey theory}.
\newblock Wiley Series in Discrete Mathematics and Optimization. John Wiley \& Sons, Inc., Hoboken, NJ, paperback edition, 2013.

\bibitem{completeminorpw}
Robert Hickingbotham.
\newblock Induced subgraphs and path decompositions.
\newblock {\em Electron. J. Combin.}, 30(2):Paper No. 2.37, 12, 2023.

\bibitem{pohoata2014unavoidable}
Andrei~Cosmin Pohoata.
\newblock Unavoidable induced subgraphs of large graphs.
\newblock Senior thesis, Princeton University, 2014.

\bibitem{multiramsey}
Frank~P. Ramsey.
\newblock On a {P}roblem of {F}ormal {L}ogic.
\newblock {\em Proc. London Math. Soc. (2)}, 30(4):264--286, 1929.

\bibitem{GM1}
Neil Robertson and P.~D. Seymour.
\newblock Graph minors. {I}. {E}xcluding a forest.
\newblock {\em J. Combin. Theory Ser. B}, 35(1):39--61, 1983.

\bibitem{GM5}
Neil Robertson and P.D Seymour.
\newblock Graph minors. v. excluding a planar graph.
\newblock {\em Journal of Combinatorial Theory, Series B}, 41(1):92--114, 1986.

\end{thebibliography}

\end{document}